\newcommand{\ten}{\,\bf}
\newcommand{\ep}{{\varepsilon}}
\newtheorem{remark}{Remark}
\newtheorem{lemma}{Lemma}
\newtheorem{theorem}{Theorem}
\newtheorem{proposition}{Proposition}
\newcommand{\mathbi}[1]{{\boldsymbol{#1}}}
\newcommand{\etal}{\textit{et al.} }
\newcommand{\cf}{\textit{cf.\,}}
\newcommand{\eg}{\textit{e.g.\,}}
\newcommand{\frc}{{\mathrm{f}}}
\newcommand{\jmp}[1]{{\,\big[ \hspace{-1.0mm}\big[\,#1\,\big]\hspace{-1.0mm}\big] }}
\newcommand{\TT}{\parallel}
\newcommand{\itLa}{\mathit{\Lambda}}
\newcommand{\tsLa}{\mathbi{\itLa}}
\newcommand{\tsLaINI}{\tsLa^{\!\tsT^0}}
\newcommand{\tsT}{\mathbi{T}}
\newcommand{\veu}{\mathbi{u}}
\newcommand{\ven}{\mathbi{n}}
\newcommand{\vetau}{{\boldsymbol\tau}}
\newcommand{\vef}{\mathbi{f}}
\newcommand{\Grav}{G}
\def\V{{\boldsymbol V}}
\def\uu{{\boldsymbol u}}
\def\vv{{\boldsymbol v}}
\newcommand{\CS}{\mathscr{C}}
\definecolor{Blue}{rgb}{0,0,0.7}
\definecolor{Red}{rgb}{0.7,0,0}
\begin{document}
\title{Coupling of flow, {contact mechanics and friction, generating waves} in a fractured porous medium}

\author{Maarten V. de Hoop$^{1}$, Kundan Kumar$^2$\\[1.0em]
{$^1$}  Simons Chair in Computational and Applied Mathematics and Earth Science, \\Rice University, Houston TX, USA \\
{$^2$} Center for Modeling of Coupled Subsurface Dynamics, \\Department of Mathematics, University of Bergen, Norway\\
%{$^2$}  Universite Pierre et Marie Curie, Paris VI, Laboratoire Jacques-Louis Lions \\
%almanitm@utexas.edu, kundan.kumar@uib.no, \{tameem,mfw\}@ices.utexas.edu}
mvd2@rice.edu, kundan.kumar@uib.no}
%kundan.kumar@uib.no
\maketitle
%\date{\today}
%\begin{keyword}
%phase-field; pressurized fractures; iterative coupling algorithm; reservoir
%simulation
Poroelasticity;  Induced seismicity; {Well-posedness of initial value problems}; Fully dynamic Biot equations; Fractured porous media
%\end{keyword}

%\date{\today}

\begin{abstract}
We present a mixed dimensional model for a fractured poro-elasic medium including contact mechanics. The fracture is a lower dimensional surface embedded in a bulk poro-elastic matrix. The flow equation on the fracture is a Darcy type model that follows the cubic law for permeability. The bulk poro-elasticity is governed by fully dynamic Biot equations. The resulting model is a mixed dimensional type where the fracture flow on a surface is coupled to a bulk flow and geomechanics model. The particularity of the work here is in considering fully dynamic Biot equation, that is, including an inertia term, and the contact mechanics including friction for the fracture surface. We prove the well-posedness of the continuous model.
% \textcolor{red}{Last part of the paper where we shpow the well-posedness including friction-- this is where it seems too rough as well as too rushed. We should look at the completion of the arguments. And question: is it rigorous enough? This is too condensed for now. Regularization- what happens to the model when the regularization parameter going to zero. }
\end{abstract}

\section{Introduction}

Subsurface resource management involves anthropogenic activities involving injection and/or extraction of fluids below the earth surface. The resulting coupling of flow and geomechanics finds applications in subsidence events, ground water remediation, hydrogen energy storage, enhanced geothermal systems, solid waste disposal, and hydraulic fracturing,  (see e.g.\cite{jha2014coupled} and the references within, also see ~\cite{Dean:2009}). In geological carbon capture, storage and utilization (CCSU)  supercritical CO2 is injected into  deep subsurface reservoirs \cite{birkholzer2009large}.   In all these processes, anthropogenic activities involve injecting or extracting fluids in the subsurface leading to an imbalance in the fluid content in the subsurface. Injection may also lead to building up of pressure near the injection sites potentially causing induced seismicity. Subsidence phenomenon due to fluid extraction or induced seismicity are possible consequences of these processes. These effects are accentuated in the presence of fracture or fault zones as it provides mechanically vulnerable regions for inducing slips, earthquakes, and mechanical instabilities. Moreover, fractures are abundantly prevalent in the subsurface and strongly influence the flow profiles. Accordingly, geological and seismological considerations motivate the study of coupling of flow and geomechanics in a fractured subsurface setting. We refer \cite{martinez2013coupled} for use of this in studying maximum injection rates of carbon sequestration. A quantitative understanding of the effect of the mechanical behavior and the fluid pore pressure is important to evaluate the risks of these undesirable phenomena of early breakthrough in petroleum extraction or subsidence due to ground water extraction or induced seismicity events due to carbon sequestration or operations in enhanced geothermal systems.

Understanding interactions between in-situ stresses, fluid pressure through migration of injection, and fracture is a challenging problem, both mathematically and computationally. The complexity involved in the description of the failure of the rock and the fracture/fault slippages poses a challenge in its mathematical description due to the nonlinearities involved. The fault surface often has dissimilar mechanical properties of the neighboring rock and may have complex non-planar geometry with small width and having drastically different hydraulic properties than the nearby matrix. The heterogeneous internal structure of the fault zone leads to a complex description of the hydraulic properties depending on the mechanical displacement and stresses.  The presence of fluid pressure may have significant effects on fault dynamics and induced seismicity \cite{sibson1973interactions, scholz2019mechanics, wibberley2005earthquake, rice2006heating, cappa2011influence}. For example, fluids can facilitate slip on faults at stress levels lower than those required under dry conditions. When fluids are present in fault zones, they can escape or accumulate depending on the internal structure and the hydraulic properties of the fault zone. If the fluid pressure in the fault slip zone increases, the effective normal stress across the fault will decrease; this effect will in turn induce slippage of the fault surface by reducing the fault strength as can been seen by the Coulomb failure criterion \cite{sibson1973interactions, scholz2019mechanics, wibberley2005earthquake, rice2006heating}. We refer to \cite{cappa2011influence} for further discussion on this.
 
The coupling of flow and geomechanics with the fracture flow models is an excellent example of multiscale and multiphysics problem. The fracture is a long and thin domain and strong dimensional disparity suggests modelling it as a lower dimensional surface.  There is a  rich literature dealing with fractures and we refer to textbooks  \cite{bear2013dynamics, helmig1997multiphase, adler2012fractured} as standard references and to \cite{berre2019flow} for a recent survey of the different approaches of fracture flow models. A small aperture to length ratio makes it difficult to resolve the fluid flow explicitly through brute force computations. This is evident as the thinner the fracture is, one requires smaller grid size to resolve it. Furthermore, transverse flow details may not be important in a thin medium. Given that the resolution of the fracture by discretizing the thin and long medium is computationally infeasible, it is natural to have an explicit representation of a fracture as a surface in 3D or a curve in 2D.  We adopt this approach in our work. This is premised on the fact that such a fracture is a dominant feature and is long enough so that they cannot be included by incorporating their impact by modifying locally the hydraulic properties of the grid cells. The variant of these approaches are known as discrete fracture matrix model (DFM), Discrete fracture network models (DFN), reduced models, or more generally as mixed-dimensional models.  This has been widely used  \cite{Bastian2000,reichenberger2006mixed, fumagalli2013numerical, formaggia2014reduced, aghili2019two} for a variety of flow models, and \cite{SingPenKuGaWhe:2014, girault2016convergence, bukac2016dimensional, bukavc2015operator, andersen2016model, brenner2018hybrid, morales2012interface} for multiphysics models including geomechanics and other advanced physical models.

The components of the multiphysics model here is the elasticity and the flow both in the porous matrix and on the fracture surface. We build the multiphysics model in a \enquote{natural} manner by building on the existing models.  The flow (pressures and fluxes) and deformations (displacements) in the poroelastic medium are modeled by standard linear single phase  fully dynamic Biot's equations and the fracture flow is modeled by a linear time dependent slightly compressible single phase Darcy flow. The separate flow equation on the fracture surface allows us to explicitly resolve the flow in the  fractures. The flow in the fractures is coupled with the reservoir flow via a {\em{leakage term}} which accounts for the jump in fluxes across the interface. Consistent with the mass balance, the mechanical coupling for the fracture flow model is obtained via a term denoting the time rate of change of width of the fracture. The fluid pressure in the fracture balances the poro-elastic stresses from the matrix side.  From the fracture side, the appearance of the leakage term as a source term couples it to the bulk flow problem; the fluid storage term having the width (which is obtained by the jump in the displacement) couples to the mechanics equation in the bulk. For the poroelastic model in the porous matrix, the fracture appears as an interface requiring interface conditions. At this interface,  the continuity of stresses and the continuity of fluid pressures are imposed. This makes the laws governing the flows and geomechanics in the fractured porous media fully coupled. We also emphasize the rather strong assumptions involved in the linearization of the model analyzed here as the original model is highly nonlinear. Study of the simplified problem however provides us a mathematically tractable problem based on which further nonlinearities can be considered. We also note that the simplified model considered here retains the essence of the physical effects involved and is a good starting point for including more complex descriptions, either from the mechanical effects or the more involved flow models.

In this work, we prove existence and uniqueness of the solution of a coupled linearized system with one fracture. This work extends the previous works in \cite{girault2015lubrication} and \cite{girault2019mixed} that  study coupled flow and mechanics in a fractured porous medium, \cite{girault2015lubrication} using a conformal setting and \cite{girault2019mixed} using a mixed setting.  The particularity in our current work is two-fold: first, we consider the fully dynamic Biot equation instead of the quasi-static Biot model; and secondly, we enrich the model of fracture behavior by accounting for the contact mechanics including friction. The use of a fully dynamic Biot model instead of the more commonly used quasi-static model provides us a natural framework to extend the models for dynamically evolving fault strength, such as slip-weakening or rate- and state-dependent friction models to simulate \textit{induced slip and seismicity}. The use of contact mechanics in the fracture behavior is necessary. In the current models for the coupled flow and mechanics in a fractured setting, the width of the fracture is an unknown and the fracture permeability is a function of the width. However, the model does not have any mechanism for ensuring that the width remains positive. Thus, the existing models allow the unphysical behavior of fracture or fault surfaces penetrating into each other. To avoid this, we consider well-established contact mechanics models, e.g., the framework of Lions and Magenes\cite{LionsMagenes} and more closely Martins and Oden \cite{martins1987existence} that introduce a resistance when the fracture surface start penetrating (or when the width becomes zero). The mathematical techniques of our work borrows  from the previous works \cite{girault2015lubrication} and \cite{girault2019mixed} that include flow and mechanics and the work of \cite{martins1987existence} that includes contact mechanics and friction.

The multiphysics in more generality  is studied in the framework of coupled  Thermal-Hydro-Mechanical-Chemical models (THMC model framework). The model described in this work does not consider important effects including chemical reactions and thermal effects. In the applications involving carbon capture or hydrogen energy storage, these effects are of considerable importance. Moreover, our flow model is a single phase model as opposed to multiphase flow in most of applications. However, the model considered here has several challenges and extensions without considering these extra mechanisms. Incorporating the effects of friction and contact mechanics in the fractured/fault case and extending this to include rate and state variables in the presence of fluids remains a question of high and active interest. We accordingly, restrict ourselves to the simplest situation that captures the key components, that is, coupling of flow, deformations but include normal and friction forces on a fractured/fault surface. %Secondly, incorporating geochemistry, thermal, and multiphase is quite complex and out of scope of the present considerations.

We provide a brief summary of the relevant literature. 
The framework of coupled flow and geomechanics has been based on the pioneering work of Terzaghi \cite{ ref:Terz43} and Biot \cite{ref:Biot41a,ref:Biot41b}. Since these foundational works, there has been active investigation into the coupled geomechanics and flow problems including nonlinear extensions \cite{ref:Coussy89,ref:LewSch98}. We refer the reader to \cite{ref:SetMo94,ref:Fung94,ref:Gai03,ref:Chin98} and the references therein for representative works dealing with these coupled problems.    The quasi-static Biot systems have  been analyzed by a number of authors who established existence, uniqueness, and regularity, see Showalter ~\cite{ref:Showt2000} and references therein,  Phillips \& Wheeler~\cite{ref:PhW1h07}. More recently, Mikeli\'c and Wheeler \cite{ref:Mikwheel12} have considered different iterative schemes for flow and mechanics couplings and established  contractive results  in suitable norms; see also  \cite{kim11,kim2} for  studying the stability of iterative algorithms. For a variational characterization of iterative schemes, we refer to the recent work \cite{both2019gradient}. A parallel in time extension of the iterative scheme was proposed in the work of \cite{br2}, and a fixed-stress split based preconditioner for the simultaneously coupled system was proposed in the work of \cite{ref:Gai03,  white2016block, gaspar2017fixed}. There has been an investigation into the optimization of fixed stress algorithm in \cite{storvik2019optimization}. Multiphase flow coupled with linear/nonlinear geomechanics has been solved using iterative algorithm and its variants \cite{both2019anderson, murad2021fixed, quevedo2021novel}. In the context of fractured poroelastic medium, there have been several approaches for numerical computations \cite{franceschini2020algebraically, mehmani2021multiscale, vuik2021preconditioned, stefansson2020numerical}. In a closely related work, \cite{boon2023mixed} uses  mixed dimensional calculus tools to study coupled flow and geomechanics in a fractured medium. \cite{deHoop2020, ye2020multi} studies the elastic wave equations including friction behavior of the fault surface without involving any fluid. Recently, \cite{IvanFracture} shows the stability of a numerical scheme based on Nitsche's method for a Stokes-Biot model where the flow in the fracture is modeled by the Stokes equation.   

The paper is structured as follows. In Section \ref{sec:notation}, we discuss the notation and functional spaces used. This is followed by model equations in Section \ref{sec:modeleqns}. A weak formulation is developed in Section \ref{sec:weak} and analyzed in Section \ref{sec:wellposedness} for well-posedness. Contact mechanics including the friction effects has been incorporated in Section \ref{sec:contact2}.

\section{Notation and functional setting} \label{sec:notation}

We denote scalar quantities in italics, e.g., $\lambda, \mu, \nu$, a typical point in $\mathbb{R}^d, d = 2 \text{ or } 3$ and vectors in bold e.g., $\it{\bf{x}, \bf{y}, \bf{z}}$, matrices and second order tensors in bold face, e.g., $\textbf{A}, \textbf{B}, \textbf{C}$, and fourth order tensors in blackboard face, e.g., $\mathbb{A}, \mathbb{B}, \mathbb{C}$. The symmetric part of a second order tensor $\textbf{A}$ is denoted by $\hat{\ten{A}} = \frac{1}{2} (\ten{A}+\ten{A}^T)$ where $\ten{A}^T$ is the transpose of $\ten{A}$. In particular, $\widehat{\nabla \boldsymbol{u}}$ represents the deformation tensor. An index that follows a comma indicates a partial derivative with respect to the corresponding component of the spatial variable $\bf x$; we also use the $\nabla$ to denote the gradient operator.  For integrals, we use $\mathrm{d}x$  for the volume element and $\mathrm{d}a$ for the surface element. For a time dependent quantity, a dot above a variable or $\partial_t$ represents the time derivative of the variable. 

\par Our interest is in mathematical models describing the evolution or the equilibrium of the mechanical state of a body within the framework of linearized strain theory. We use the symbols ${\uu}, \boldsymbol{\sigma}, \text{ and } \boldsymbol{\varepsilon} = \boldsymbol \varepsilon(\uu)$ for the displacement vector, the stress tensor, and the linearized strain tensor. The components of the strain tensor are given by
\begin{align*}
{\boldsymbol{\varepsilon}}({\uu}) = ({\boldsymbol{\varepsilon}}({\uu}))_{ij} = \dfrac{1}{2} ({ u}_{i,j} + { u}_{j,i}).
\end{align*}
Here $u_{i,j} = \dfrac{\partial u_i} {\partial x_j}$. 

We denote the space of second order symmetric tensors on $\mathbb{R}^d$ by $\mathbb{S}^d$. We use standard notation for inner products, that is, 
\begin{align*}
{\boldsymbol{u}}\cdot {\boldsymbol{v}} = \sum_i u_i v_i, \|{\boldsymbol v}\| = ({\boldsymbol v} \cdot {\boldsymbol v})^{1/2} \text{ for all } {\boldsymbol u} = (u_i), {\boldsymbol v} = (v_i) \in \mathbb{R}^d, \\
 {\boldsymbol{\sigma}}:{\boldsymbol{\tau}} = \sum_{i,j} \sigma_{ij}\tau_{ij}, \|{\boldsymbol{\tau}}\| = ({\boldsymbol{\tau}}, {\boldsymbol{\tau}})^{1/2} \text{ for all } {\boldsymbol{\sigma}} = (\boldsymbol \tau_{ij}) \in \mathbb{S}^d,
\end{align*}
respectively. We use $\boldsymbol{0}$ for the zero element of the spaces $\mathbb{R}^d, \mathbb{S}^d$. Here, the indices $i,j$ run between $1$ and $d$. Unless otherwise indicated, the summation convention over repeated indices is implied. 

% We represent a ball of given radius $r$ centered at $\bf{x}$ by $B_r(\bf x) \subset \mathbb{R}^3$ and a circle of radius $r$ and center $\bf y$ by $B_r(\bf{y})^\prime \subset \mathbb{R}^2$.

% We let $\Omega$ be the reference configuration of the body, assumed to be open, bounded, connected set in $\mathbb{R}^d$ with a Lipschitz boundary $ \partial \Omega$.  {\color{red} [introduce notation for $\Gamma$ and $\Gamma_{\pm}$, here; where is $\Gamma$ used?]}

Let the reservoir $\Omega$ be a bounded  domain of $\mathbb{R}^d$  $d=2$ or $3$, with a piecewise smooth Lipschitz boundary $\partial\Omega$ and exterior normal $\boldsymbol{n}$ and let $\Gamma$ be an open subset of $\partial\Omega$ with positive measure. Let the fracture $\CS \Subset \Omega$ be a simple closed piecewise smooth curve with endpoints $\gamma_1$ and $\gamma_2$ when $d=2$ or a simple closed piecewise smooth surface with piecewise smooth Lipschitz boundary $\partial \CS$ when $d = 3$. The reservoir contains both the matrix and the fractures; thus the reservoir matrix is $\Omega \setminus \CS$. The closure of the domain will be denoted by $\overline{\Omega}$.  We denote by $\boldsymbol \tau^\star_j$, $1 \le j \le d-1$, a set of orthonormal tangent vectors on $\CS^\star$,  $\star = +,-$. The tangential derivative on the surface $\CS$ will be denoted by $\overline \nabla$.

\subsection*{Function spaces}

We use standard notations for Sobolev and Lebesgue spaces over $\Omega$ or on $\Gamma$. In particular, We denote by $L^2(\Omega)$ the space of square integrable functions on a Lipschitz  domain $\Omega$ or $L^2(\Omega; \mathbb{R}^d)$, $L^2(\Gamma; \mathbb{R}^d)$, $H^1(\Omega; \mathbb{R}^d)$ endowed with their  canonical inner products and associated norms. For an element ${\boldsymbol v} \in H^1(\Omega; \mathbb{R}^d)$ we write ${\boldsymbol v}$  for its trace $\gamma {\boldsymbol v}$ on $\Gamma$. Let $\Gamma \subset \partial \Omega$ be an open set at which homogeneous Dirichlet boundary data is prescribed. 
% Define the function spaces 
% \begin{equation*}
% V = \{{\bf v} \in H^1(\Omega; \mathbb{R}^d) ; {\bf v} = 0 \text{ on } \Gamma_1\} ,\quad
% Q = \{{\ten{\sigma}} = (\sigma_{ij}) \in L^2(\Omega) ; \sigma_{ij} = \sigma_{ji}\}
% \end{equation*}
% {\color{red} [in the weak formulation, $\boldsymbol{V}$ is used as notation]} for the displacement and the stress, respectively. These are real Hilbert spaces endowed with the inner products {\color{red} [inner product for $Q$ missing]}
% \begin{align*}
% ({\bf u}, {\bf v})_V = \int_\Omega {\ten{\ep({\bf u})}} \cdot  {\ten{\ep({\bf v})}} {\rm{d}}x,
% \quad
% ({\ten u}, {\bf v})_V = \int_\Omega {\ten{\ep({\bf u})}} \cdot  {\ten{\ep({\bf v})}} {\rm{d}}x,
%\end{align*}

We define the function spaces 
 \begin{equation*}
 H^1_{0,\Gamma_1}(\Omega) = \{{\boldsymbol v} \in H^1(\Omega; \mathbb{R}^d) ; {\boldsymbol v} = 0 \text{ on } \Gamma_1\},
 %Q = \{{\ten{\sigma}} = (\sigma_{ij}) \in L^2(\Omega) ; \sigma_{ij} = \sigma_{ji}\}
 \end{equation*}
We list Poincar\'e's, Korn's, and some trace inequalities that will be used later. 
Poincar\'e's inequality in $H^1_{0,\Gamma_1}(\Omega)$ reads: There exists a 
constant ${\mathcal P}_{\Gamma_1}$ depending only on $\Omega$ and $\Gamma_1$ such that
\begin{equation}
\forall v \in H^1_{0,\Gamma_1}(\Omega)\,,\, \|v\|_{L^2(\Omega)} \le {\mathcal P}_{\Gamma_1} |v|_{H^1(\Omega)}.
\label{eq:Poincare}
\end{equation}
Here, $|v|_{H^1(\Omega)}$ is the $H^1$ semi-norm.  Next, we recall Korn's first inequality in $H^1_{0,\Gamma_1}(\Omega)^d$: There exists a constant $C_\kappa$ depending only on $\Omega$ and $\Gamma_1$ such that
\begin{equation}
\forall \vv \in H^1_{0,\Gamma_1}(\Omega)^d\,,\, |\vv|_{H^1(\Omega)} \leq C_{\kappa} \|\varepsilon(\vv)\|_{L^2(\Omega)}.
\label{eq:Korn1}
\end{equation}  
We shall use the following trace inequality in $H^1(\Omega)$: There exists a constant $C_\tau$ depending only on $\Omega$ and $\partial \Omega$ such that
\begin{equation}
\forall \varepsilon >0\,,\,\forall v \in H^1(\Omega)\,,\, \|v\|_{L^2(\partial \Omega)} \leq 
\varepsilon\|\nabla\,v\|_{L^2(\Omega)} + \big(\frac{C_\tau}{\varepsilon} + \varepsilon\big)\|v\|_{L^2(\Omega)}.
\label{eq:trace}
\end{equation}
This inequality follows for instance from the interpolation inequality (see Brenner \& Scott~\cite{ref:BrSc})
$$ \forall v \in H^1(\Omega)\,,\, \|v\|_{L^2(\partial \Omega)} \leq C \|v\|^{\frac{1}{2}}_{L^2(\Omega)} \|v\|^{\frac{1}{2}}_{H^1(\Omega)},$$
and Young's inequality. Besides \eqref{eq:trace}, by combining \eqref{eq:Poincare} and \eqref{eq:Korn1}, we immediately derive the alternate trace inequality, with a constant $C_D$ depending only on $\Omega$ and $\partial \Omega$:
\begin{equation}
\forall \vv \in H^1_{0,\Gamma_1}(\Omega)^d\,,\, \|\vv\|_{L^2(\partial \Omega)} \le C_D \|\ep(\vv)\|_{L^2(\Omega)}.
\label{eq:trace2}
\end{equation}

% As far as the divergence operator is concerned, we shall use the space 
% $$H({\rm div};\Om) = \{\vv \in L^2(\Om)^d\,;\, \nabla\cdot\vv \in L^2(\Om)\},$$
% equipped with the norm
% $$\|\vv\|_{H({\rm div};\Om)} = \big(\|\vv\|^2_{L^2(\Om)} + \|\nabla\cdot\vv\|^2_{L^2(\Om)}\big)^{\frac{1}{2}}.$$

As usual, for handling time-dependent problems, it is convenient
to consider functions defined on a time interval $]a,b[$ with
values in a functional space, say $X$ (cf.~\cite{ref:LM72}). More precisely, let $\|\cdot\|_X$ denote
the norm of $X$; then for any $r$, $1 \le r \le \infty$, we
define
$$
L^r(a,b;X) = \left\{f \mbox{ measurable in } ]a,b[\,; \int_a^b \|f(t)\|_X^rdt <\infty\right\},
$$
equipped with the norm
$$\|f\|_{L^r(a,b;X)} = \left(\int_a^b \|f(t)\|_X^rdt\right)^{\frac{1}{r}},$$
with the usual modification if $r = \infty$. This space is a Banach space
if $X$ is a Banach space, and for $r=2$, it is a Hilbert space if $X$ is a Hilbert space. To simplify, we sometimes denote derivatives with respect to time with a prime and we define for any $r$, $1 \le r \le \infty$,
$$W^{1,r}(a,b;X) = \{f \in L^r(a,b;X)\,;\, f^\prime \in L^r(a,b;X)\}.$$
For any $r\ge 1$, as the functions of $W^{1,r}(a,b;X)$ are continuous with respect to time, we define
$$W^{1,r}_0(a,b;X) = \{f \in W^{1,r}(a,b;X)\,;\, f(a) = f(b) =0\},$$
and we denote by $W^{-1,r}(a,b;X)$ the dual space of $W^{1,r^\prime}_0(a,b;X)$,  where $r^\prime$ is the dual exponent of $r$, ${\frac{1}{r^\prime}}+{\frac{1}{r}} =1$. 

The spaces for our unknowns are described below. To simplify the notation, the spaces related to $\CS$ are written $L^2(\CS)$, $H^{1}(\CS)$, etc, although they are defined in the interior of $\CS$.  It is convenient  to  introduce an auxiliary partition of $\Omega$ into two non-overlapping subdomains $\Omega^+$ and $\Omega^-$ with Lipschitz interface $\Gamma$ containing $\CS$, $\Omega^\star$ being adjacent to $\CS^\star$, $\star = +, -$. The precise shape of $\Gamma$ is not important as long as $\Omega^+$ and $\Omega^-$ are both Lipschitz. Let $\Gamma^\star= \partial \Omega^\star\setminus \Gamma$. For any function $f$ defined in $\Omega$, we extend the star notation to $\Omega^\star$ and set $f^\star = f_{|\Omega^\star}$, $\star = +, -$.
Let $W = H^1(\Omega^+ \cup \Omega^-)$ with norm
$$\|v\|_W = \big(\|v^+\|^2_{H^1(\Omega^+)} + \|v^-\|^2_{H^1(\Omega^-)}\big)^{\frac{1}{2}}.$$
The space for the displacement is $\uu \in L^\infty(0,T;\V), \partial_t \uu \in L^\infty(0,T;L^2(\Omega)) \displaystyle \cap L^2(0,T;\V), \partial_{tt} \uu \in L^2(0,T;\V^\prime)$, where $\V$ is a closed subspace of $H^1(\Omega\setminus \CS)^d$:
\begin{equation}
\V = \{\vv \in W^d\,;\,  [\vv]_{\Gamma\setminus \CS} = {\bf 0}, \vv^\star_{|\Gamma^\star} = {\bf 0},\star = +,-\},
\label{eq:V}
\end{equation}
with the norm of $W^d$:
\begin{equation}
\|\vv\|_\V = \big(\sum_{i=1}^d \|v_i\|^2_W\big)^{\frac{1}{2}}.
\label{eq:normV}
\end{equation}
% \noindent We define $W = H^1(\Omega^+ \cup \Omega^-)$, that is, 
% \[
% W = \{v \in L^2(\Omega); v^+ \in H^1(\Omega^+) ,\ v^- \in H^1(\Omega^-) \} 
% \]
% normed by the graph norm
% \[
% \|v\|_W = \left(\|v^+\|^2_{H^1(\Omega^+)}+\|v^-\|^2_{H^1(\Omega^-)} \right)^{1/2}.
% \]
 % {\color{red} [$V$: ambiguous notation]} 
%\[ V = \{\boldsymbol v \in W^d; [\boldsymbol v]_{\Gamma\setminus \CS} = {\bf{0}},\ {\boldsymbol v}^{+}|_{\Gamma^+} {\color{red} = ..} ,\ {\boldsymbol v}^{-}|_{\Gamma^-} = {\bf 0}  \} \]
% with the norm defined on $W^d$
% \[
% \|\boldsymbol v\|_V = \left ( \sum_{i = 1}^d  \| v_i \|_W^2 \right )^{1/2}. 
% \]
For the space for displacement, the regularity in space will be studied in $\V$ as stated above. By the continuity of the trace operator on $\Gamma$ both in $H^1(\Omega^+)$ and $H^1(\Omega^-)$, it follows that $V$ is a closed subspace of $W^d$. Moreover, the countable basis of $W$ implies separability of $V$.  \par In order to specify the relationship between the displacement $\uu$ of the medium and the width $w$ of the fracture, we distinguish the two sides of $\CS$ by the superscript $* = +, -$ with $\Omega^*$.

We state the following result. The proof can be found in the appendix of \cite{girault2015lubrication}.\\

\begin{proposition}
The space $\boldsymbol V$ is a closed subspace of $H^1(\Omega \setminus \CS)^d$.
\end{proposition}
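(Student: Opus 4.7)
The plan is to verify first that every element of $\boldsymbol V$ actually belongs to $H^1(\Omega\setminus\CS)^d$, then to check that the norm of $W^d$ agrees with the norm of $H^1(\Omega\setminus\CS)^d$ on $\boldsymbol V$, and finally to prove closedness by a simple continuity-of-trace argument.

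First, I would show the inclusion $\boldsymbol V\subset H^1(\Omega\setminus\CS)^d$. Given $\vv\in\boldsymbol V$, we have $\vv^+\in H^1(\Omega^+)^d$ and $\vv^-\in H^1(\Omega^-)^d$, and by definition $[\vv]_{\Gamma\setminus\CS}=\boldsymbol 0$. Because $\Omega^+$ and $\Omega^-$ are Lipschitz with shared interface $\Gamma\supset\CS$, the standard gluing lemma for $H^1$ functions applies across $\Gamma\setminus\CS$: when two $H^1$ pieces defined on adjacent Lipschitz subdomains have matching traces on a common Lipschitz piece of their shared boundary, the glued function is $H^1$ on the union of the subdomains together with that common piece, with weak gradient equal to the piecewise gradient. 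Applying this along $\Gamma\setminus\CS$ and leaving $\CS$ as a slit where jumps are admitted, one obtains $\vv\in H^1(\Omega\setminus\CS)^d$.

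Second, I would observe that on $\boldsymbol V$ the two norms coincide:
\begin{equation*}
\|\vv\|_{\boldsymbol V}^{2}=\sum_{i=1}^{d}\bigl(\|v_i^+\|_{H^1(\Omega^+)}^{2}+\|v_i^-\|_{H^1(\Omega^-)}^{2}\bigr)=\|\vv\|_{H^1(\Omega\setminus\CS)^d}^{2},
\end{equation*}
since $\Omega\setminus\CS$ is, up to a set of measure zero, the disjoint union of $\Omega^+$ and $\Omega^-$. This identification allows us to regard $\boldsymbol V$ interchangeably as a subspace of $W^d$ or of $H^1(\Omega\setminus\CS)^d$ with the same topology.

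Third, I would establish closedness. Let $\vv_n\in\boldsymbol V$ converge to some $\vv$ in $H^1(\Omega\setminus\CS)^d$. Then $\vv_n^\star\to\vv^\star$ in $H^1(\Omega^\star)^d$ for $\star=+,-$, so by continuity of the trace operator from $H^1(\Omega^\star)$ into $L^2(\partial\Omega^\star)$, the boundary traces $\vv_n^\star|_{\partial\Omega^\star}$ converge to $\vv^\star|_{\partial\Omega^\star}$ in $L^2$. The linear conditions $\vv_n^{+}|_{\Gamma\setminus\CS}=\vv_n^{-}|_{\Gamma\setminus\CS}$ and $\vv_n^{\star}|_{\Gamma^\star}=\boldsymbol 0$ are therefore preserved in the limit, giving $\vv\in\boldsymbol V$.

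The only delicate point is the first step: the domain $\Omega\setminus\CS$ is not itself a standard Lipschitz domain because of the slit $\CS$, so one has to invoke the gluing lemma locally at each point of $\Gamma\setminus\CS$ (away from $\partial\CS$) where the two Lipschitz subdomains meet cleanly along a Lipschitz interface. The remaining two steps are essentially automatic from the continuity of the trace operators on $\Omega^+$ and $\Omega^-$.
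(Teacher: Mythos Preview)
The paper does not prove this proposition itself; it simply refers to the appendix of \cite{girault2015lubrication}. Your argument is correct and follows the natural three-step route: the gluing lemma across $\Gamma\setminus\CS$ gives the inclusion $\boldsymbol V\subset H^1(\Omega\setminus\CS)^d$, the two norms coincide on $\boldsymbol V$ since $\Gamma$ has Lebesgue measure zero, and closedness then follows from continuity of the trace maps on the Lipschitz pieces $\Omega^\pm$ (the paper itself invokes exactly this last point, just before stating the proposition, to observe that $\boldsymbol V$ is closed in $W^d$). Your caveat that $\Omega\setminus\CS$ is a slit domain and that the gluing should be checked against test functions supported away from $\CS$ is appropriate; no additional ingredient is required.
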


 There is a non-negative function $w$ assumed to be $H^1$ in time and smooth in space away from the crack front. It vanishes on $\partial \CS$ and in the neighborhood of $\partial \CS$, $w$ is asymptotically of the form
 \begin{align}
\label{eq:hyp_w} w(x,y) \approx x^{\frac{1}{2}+\epsilon}f(y), \text{ with small } \epsilon > 0,     
 \end{align}
where $y$ is locally parallel to the crack front, $x$ is the distance to the crack, and $f$ is smooth. 
The width in he fracture goes to zero at the boundaries of the fracture. To take care of this degeneracy, we need to define weighted spaces,
\[H^1_w(\CS) = \{z \in H^{1/2}(\CS); w^{\frac 3 2} \bar \nabla z \in L^2(\CS)^{d-1}\}. \]
The norm on the space $H^1_w$ is given by
\[\|z\|^2_{H^1_w(\CS)} =  \|z\|^2_{H^{1/2}(\CS)} + \|w^{\frac 3 2}\bar{\nabla} z\|_{L^2(\CS)}^2 .  \]
We collect some facts about the space $H^1_w(\CS)$, the proofs of which can be found in  \cite{girault2015lubrication}. The asymptotic form of $w$ guarantees the following result

\begin{theorem} \label{th:weightedspace}
$H_w^1(\CS)$ is a Hilbert space, $W^{1,\infty}(\CS)$ is dense in $H_w^1(\CS)$, $H^1_w(\CS)$ is separable, and the following Green's formula holds for all $\theta \in H_w^1(\CS)$ such that 
$\overline{\nabla} \cdot (w^3 \overline{\nabla} \theta) \in H^{-\frac{1}{2}}(\CS)$, 
\[\text{ for all } v \in H^1_w(\CS),\quad -\langle \overline{\nabla} \cdot (w^3 \overline \nabla \theta), v \rangle_{\CS} = \int_{\CS} w^{\frac 3 2} \overline{\nabla} \theta \cdot w^{\frac 3 2} \overline{\nabla} v dx. \]
%{\color{red} [I suggest writing the measure; $\overline{\nabla}$ needs to be defined and the notation ($\lambda$) is ambiguous]}
\end{theorem}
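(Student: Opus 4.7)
I would handle the four claims in order; the only subtle point is the degeneracy of $w$ at $\partial\CS$, which the asymptotic form \eqref{eq:hyp_w} allows one to control.

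\emph{Hilbert structure and separability.} The norm of $H^1_w(\CS)$ is induced by the inner product
\[
(z_1,z_2)_{H^1_w(\CS)} = (z_1,z_2)_{H^{1/2}(\CS)} + \int_\CS w^{3/2}\overline\nabla z_1\cdot w^{3/2}\overline\nabla z_2\,dx.
\]
For completeness, a Cauchy sequence $\{z_n\}$ yields limits $z\in H^{1/2}(\CS)$ and $\boldsymbol g\in L^2(\CS)^{d-1}$ of $z_n$ and of $w^{3/2}\overline\nabla z_n$ respectively. On any compact $K\Subset\CS\setminus\partial\CS$ the weight $w^{3/2}$ is bounded below, hence $\overline\nabla z_n\to w^{-3/2}\boldsymbol g$ in $L^2(K)^{d-1}$; comparing with the distributional limit $\overline\nabla z_n\to\overline\nabla z$ identifies $\boldsymbol g=w^{3/2}\overline\nabla z$ on $K$, and exhaustion by such $K$ closes the argument. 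Separability is then immediate from the isometric embedding $z\mapsto(z,w^{3/2}\overline\nabla z)$ of $H^1_w(\CS)$ into the separable Hilbert space $H^{1/2}(\CS)\times L^2(\CS)^{d-1}$.

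\emph{Density of $W^{1,\infty}(\CS)$.} Given $z\in H^1_w(\CS)$, a cutoff-and-mollification scheme should work. Let $\chi_\delta\in C^\infty(\CS)$ equal $1$ on $\{x\ge 2\delta\}$ and $0$ on $\{x\le\delta\}$, with $|\overline\nabla\chi_\delta|\le C/\delta$, where $x$ denotes the local distance to $\partial\CS$. The non-trivial part is the weighted piece of the norm: expanding
\[
w^{3/2}\overline\nabla\bigl((1-\chi_\delta)z\bigr) = (1-\chi_\delta)\,w^{3/2}\overline\nabla z - z\,w^{3/2}\overline\nabla\chi_\delta,
\]
the first summand goes to $0$ in $L^2$ by dominated convergence. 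For the second, on the collar $\{\delta\le x\le 2\delta\}$ of measure $\sim\delta$, \eqref{eq:hyp_w} yields $w^{3/2}|\overline\nabla\chi_\delta|\lesssim \delta^{-1/4+3\epsilon/2}$, and a Sobolev embedding $H^{1/2}(\CS)\hookrightarrow L^q(\CS)$ for sufficiently large $q$ (which is available since $\dim\CS\le 2$) forces the summand to $0$ in $L^2$ as well. The $H^{1/2}$ convergence of $\chi_\delta z$ to $z$ follows from a parallel estimate exploiting the interaction between the cutoff and the Gagliardo seminorm, again built around \eqref{eq:hyp_w}. Since $\chi_\delta z$ vanishes in a neighborhood of $\partial\CS$, a standard mollification on its support produces in fact an element of $C_c^\infty(\CS\setminus\partial\CS)\subset W^{1,\infty}(\CS)$, which gives both the claimed density of $W^{1,\infty}(\CS)$ and, as a bonus, the density of $C_c^\infty(\CS\setminus\partial\CS)$ in $H^1_w(\CS)$.

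\emph{Green's formula.} For $\theta$ as in the statement, both maps
\[
v\mapsto\langle\overline\nabla\cdot(w^3\overline\nabla\theta),v\rangle_\CS,\qquad v\mapsto\int_\CS w^{3/2}\overline\nabla\theta\cdot w^{3/2}\overline\nabla v\,dx
\]
are continuous linear forms on $H^1_w(\CS)$: the first by the $H^{-1/2}/H^{1/2}$ duality combined with the injection $H^1_w(\CS)\hookrightarrow H^{1/2}(\CS)$, the second by Cauchy--Schwarz. For $v\in C_c^\infty(\CS\setminus\partial\CS)$ the identity
\[
-\langle\overline\nabla\cdot(w^3\overline\nabla\theta),v\rangle_\CS = \int_\CS w^{3/2}\overline\nabla\theta\cdot w^{3/2}\overline\nabla v\,dx
\]
is nothing but the defining relation of the distribution $\overline\nabla\cdot(w^3\overline\nabla\theta)$, and the density of $C_c^\infty(\CS\setminus\partial\CS)$ in $H^1_w(\CS)$ just established extends it by continuity to every $v\in H^1_w(\CS)$. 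The genuinely hard step throughout is the collar estimate in the density argument; the strict inequality $\tfrac12+\epsilon>\tfrac12$ in \eqref{eq:hyp_w} is exactly what makes the exponents work out, and without it the whole scheme would break down.
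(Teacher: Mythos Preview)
The paper does not supply its own proof of this theorem: immediately before the statement it writes ``the proofs of which can be found in \cite{girault2015lubrication}'' and then states the result without argument. So there is no in-paper proof to benchmark against, only the referenced source.

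On its own merits your sketch is essentially sound and follows the natural route: completeness via the product embedding, density via cutoff near $\partial\CS$ followed by mollification, and Green's formula by continuity from compactly supported test functions. The collar computation is correct: with $w^{3/2}\sim \delta^{3/4+3\epsilon/2}$ and $|\overline\nabla\chi_\delta|\lesssim\delta^{-1}$ you get the pointwise bound $\delta^{-1/4+3\epsilon/2}$, and combining H\"older on the collar (measure $\sim\delta$) with the embedding $H^{1/2}(\CS)\hookrightarrow L^q(\CS)$ for $q<4$ in the worst case $\dim\CS=2$ yields a net positive power of $\delta$ precisely because $\epsilon>0$.

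One point to sharpen: you attribute the convergence $\chi_\delta z\to z$ in $H^{1/2}(\CS)$ to ``a parallel estimate \ldots built around \eqref{eq:hyp_w}''. That step has nothing to do with $w$; it is the standard fact that $C_c^\infty(\CS\setminus\partial\CS)$ is dense in $H^{1/2}(\CS)$ (equivalently $H^{1/2}_0=H^{1/2}$ on a Lipschitz domain, the borderline Lions--Magenes case). Either invoke that directly, or give a cutoff estimate in the Gagliardo seminorm that does not refer to $w$. With that correction your argument goes through, and indeed yields the stronger density of $C_c^\infty(\CS\setminus\partial\CS)$, which is exactly what you use to transfer Green's formula.
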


\noindent We define the mixed space where pressure, $p$, will be defined
\[
Q = \{p \in H^1(\Omega); p_c \in H_w^1(\CS) \}.
\]
Here, $p_c$ is the trace of $p$ which exists by virtue of $p$ being in $H^1(\Omega)$. The norm on this mixed space is given by
\[
\|p\|_Q^2 = \|p\|_{H^1(\Omega)}^2+ \|p_c\|^2_{H^{1/2}(\CS)} + \|w^{\frac 3 2} \overline{\nabla} p_c\|_{L^2(\CS)}^2.
\]
Using that $H_w^1(\CS)$ is a separable Hilbert space, the following is immediate

\begin{theorem}
$Q$ is a separable Hilbert space. Moreover, there exists a continuous linear extension operator, $E :\ H_w^1(\CS) \to Q$.
\end{theorem}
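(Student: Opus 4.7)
The plan is as follows. The norm $\|\cdot\|_Q$ is clearly induced by the inner product
\[\langle p,q\rangle_Q = \langle p,q\rangle_{H^1(\Omega)} + \langle p_c,q_c\rangle_{H^{1/2}(\CS)} + \int_\CS w^3\,\overline{\nabla}p_c\cdot\overline{\nabla}q_c\,dx,\]
so only completeness has to be verified. Given a Cauchy sequence $\{p_n\}\subset Q$, the definition of $\|\cdot\|_Q$ immediately shows that $\{p_n\}$ is Cauchy in $H^1(\Omega)$ and that the traces $\{p_{n,c}\}$ are Cauchy in $H_w^1(\CS)$. Theorem~\ref{th:weightedspace} and the completeness of $H^1(\Omega)$ yield limits $p\in H^1(\Omega)$ and $q\in H_w^1(\CS)$. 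Continuity of the trace operator $H^1(\Omega)\to H^{1/2}(\CS)$ gives $p_{n,c}\to p_c$ in $H^{1/2}(\CS)$, while the continuous embedding $H_w^1(\CS)\hookrightarrow H^{1/2}(\CS)$ (immediate from the definition of $\|\cdot\|_{H_w^1(\CS)}$) gives $p_{n,c}\to q$ in $H^{1/2}(\CS)$. Uniqueness of limits in $H^{1/2}(\CS)$ forces $p_c=q\in H_w^1(\CS)$, hence $p\in Q$ and $p_n\to p$ in $Q$.

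For separability, the map $J:Q\to H^1(\Omega)\times H_w^1(\CS)$ defined by $Jp=(p,p_c)$ is a linear isometry when the product is endowed with the Hilbertian norm $\bigl(\|\cdot\|^2_{H^1(\Omega)}+\|\cdot\|^2_{H_w^1(\CS)}\bigr)^{1/2}$. Both factors are separable Hilbert spaces (the second by Theorem~\ref{th:weightedspace}), so their product is separable, and therefore so is the isometric image $J(Q)$; separability transfers back to $Q$.

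For the extension operator, I combine the continuous inclusion $\iota:H_w^1(\CS)\hookrightarrow H^{1/2}(\CS)$ with a continuous right inverse $R:H^{1/2}(\CS)\to H^1(\Omega)$ of the trace on $\CS$, and set $E=R\circ\iota$. Then, by construction, the trace $(E\theta)_c$ equals $\theta\in H_w^1(\CS)$, so $E\theta\in Q$, and
\[\|E\theta\|_Q^2 \le \|R\theta\|_{H^1(\Omega)}^2 + \|\theta\|_{H^{1/2}(\CS)}^2 + \|w^{3/2}\overline{\nabla}\theta\|_{L^2(\CS)}^2 \le (C^2+1)\,\|\theta\|_{H_w^1(\CS)}^2,\]
where $C$ is the operator norm of $R$.

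The main obstacle is the construction of $R$: because $\CS$ is an interior surface carrying a nontrivial boundary $\partial\CS$ (the crack front), one cannot invoke the classical trace lifting from the boundary of a Lipschitz domain directly. I would resolve this by working in a tubular neighborhood $U$ of $\CS$ inside $\Omega$, on which $\CS$ behaves as a piece of boundary of one side of $U$; the classical $H^{1/2}\to H^1$ lifting applies locally, and a smooth cutoff supported in $U$ produces a function in $H^1(\Omega)$ with trace $\theta$ on $\CS$, continuously in $\theta$. Once this lifting is in hand, the $Q$-estimate above is routine.
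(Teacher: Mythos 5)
Your proof of the first two assertions is correct and, in fact, more explicit than the paper, which offers no argument at all: it declares the theorem ``immediate'' from the properties of $H^1_w(\CS)$ and defers those (and, implicitly, the extension operator) to \cite{girault2015lubrication}. Your completeness argument, and the separability via the isometry $p\mapsto(p,p_c)$ into $H^1(\Omega)\times H^1_w(\CS)$, are the natural route and need no change.

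The gap is in your construction of the lifting $R$, i.e.\ exactly at the point you flag as the main obstacle. Membership in $Q$ requires $E\theta\in H^1(\Omega)$ of the \emph{whole} domain, so the extension must have no jump across $\CS$ (in this model the pressure is single-valued across the fracture, which is why $p_c$ is simply the trace of $p$). A lift of $\theta$ into one side $U^+$ of the tubular neighbourhood, multiplied by a cutoff equal to one near $\CS$ and extended by zero to the rest of $\Omega$, lies in $H^1(\Omega^+)$ and $H^1(\Omega^-)$ separately, but its one-sided traces on $\CS$ are $\theta$ and $0$; such a function is not in $H^1(\Omega)$, so the construction as sketched fails. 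A second, smaller omission: the classical lifting produces a right inverse of the trace onto the \emph{entire} boundary of a Lipschitz domain, so before lifting you must extend $\theta$ from $\CS$ to that full boundary; this is available because the $H^{1/2}(\CS)$ used here is the restriction-type space and $\CS$ is a Lipschitz subdomain of the dividing surface $\Gamma$, but it has to be said. The standard repair does both at once: extend $\theta$ continuously to $\tilde\theta\in H^{1/2}(\Gamma)$ (and further to $H^{1/2}(\partial\Omega^\pm)$), lift into $\Omega^+$ and $\Omega^-$ separately, and glue; since both lifts have the same trace $\tilde\theta$ on $\Gamma$, the glued function belongs to $H^1(\Omega)$ and its trace on $\CS$ is $\theta$. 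Alternatively, lift on one side of the tube and reflect across $\Gamma$ using the bi-Lipschitz product structure of the tubular neighbourhood, then cut off. With either repair, $E$ is linear, $(E\theta)_c=\theta$, and your $Q$-norm estimate goes through exactly as you wrote it.
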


% \noindent
% Furthermore, we have

% \begin{theorem}

% \end{theorem}

\section{Biot--Allard model in a fractured porous medium}
\label{sec:modeleqns}

We describe the coupling of flow and mechanics in a fractured subsurface. The model equations are of mixed dimensional type where a 3D porous matrix model is coupled to a 2D fracture model. 
Recall the subsurface domain $\Omega$  with a piecewise smooth Lipschitz boundary $\partial \Omega$ and exterior normal $\boldsymbol n$ and the fracture $\mathscr C \Subset \Omega$  with piecewise smooth Lipschitz boundary $\partial \mathscr C$. See the schematic in Figure \ref{fig:fracture} below.  

\subsection{Elasticity equation in $\Omega\setminus \mathscr C$}
\label{subsec:interior}

The displacement of the solid is modeled in $\Omega\setminus \mathscr C$ by the dynamic Biot equations for a linear elastic, homogeneous, isotropic, porous solid saturated with a slightly compressible viscous fluid.  

\begin{figure}
\centering
\begin{tikzpicture}
    % Draw the rectangle
    \draw [line width=3.5pt] (0,0) rectangle (12,8);

    % Define the curve
    \draw [line width=3.5pt] plot [smooth] coordinates {(2,3.2) (2.5,3.4)  (3,3.8) (4,4) (5,4.2) (5.6,4.4) (6.2, 4.6) (9,4.7)};
 %   \draw[blue, thick] (5,4) .. controls (6,6) and (7,3.5) .. (7.5,4.5);

    % Define the normal vector
    \coordinate (P) at (5,4.2); % Coordinates of the point on the curve
    \draw[line width=2.5pt, ->] (6.2, 4.6) -- ++(0, -1) node[below, font=\Large] {$\boldsymbol n^{+}$};
    %\coordinate (N) at ($ (P) + (1.5, 0.5) $); % Endpoint of the normal vector

    % Draw the normal vector
    %\draw[red, line width=3.5pt, ->] (P) -- (N);

    % Label the point and the normal vector
   % \node[above right] at (P) {$P$};
    %\node[right] at (N) {$\vec{N}$};

    % Define the offset for parallel lines
   % \def\offset{0.2}

    % Draw the upper parallel line
    % \draw[green, thick] (1-\offset,1+\offset) .. controls (2-\offset,3+\offset) and (3-\offset,0.5+\offset) .. (3.5-\offset,1.5+\offset);

    % % Draw the lower parallel line
    % \draw[orange, thick] (1+\offset,1-\offset) .. controls (2+\offset,3-\offset) and (3+\offset,0.5-\offset) .. (3.5+\offset,1.5-\offset);

    % Label the original curve
    \node[above right, black, font=\Large] at (5.1,4.3) {$\mathscr{C}$};
     \node[above right, black, font=\Large] at (10.1,4.8) {$\Gamma$};
     \node[above right, black, font=\Large] at (6,5.4) {$\Omega^{+}$};
      \node[above right, black, font=\Large] at (7.5,2.5) {$\Omega^{-}$};

    % Draw dotted lines connecting curve endpoints to the rectangle boundary
    \draw[dotted, line width=2.5pt] (2,3.2) -- (0,3);
    \draw[dotted, line width=2.5pt] (9,4.7) -- (12,4.8);
\end{tikzpicture}
 \caption{Schematic of a fracture $\mathcal{C}$ embedded in a rectangular subsurface medium. $\Gamma$ is the extension of $\mathscr{C}$ dividing the domain $\Omega$ into two subdomains $\Omega^{+}$ and $\Omega^{-}$. An outward normal to the boundary of subdomain $\Omega^{+}$ at the surface $\mathscr{C}$ is $\boldsymbol{n}^{+}$.}
  \label{fig:fracture}
 \end{figure}
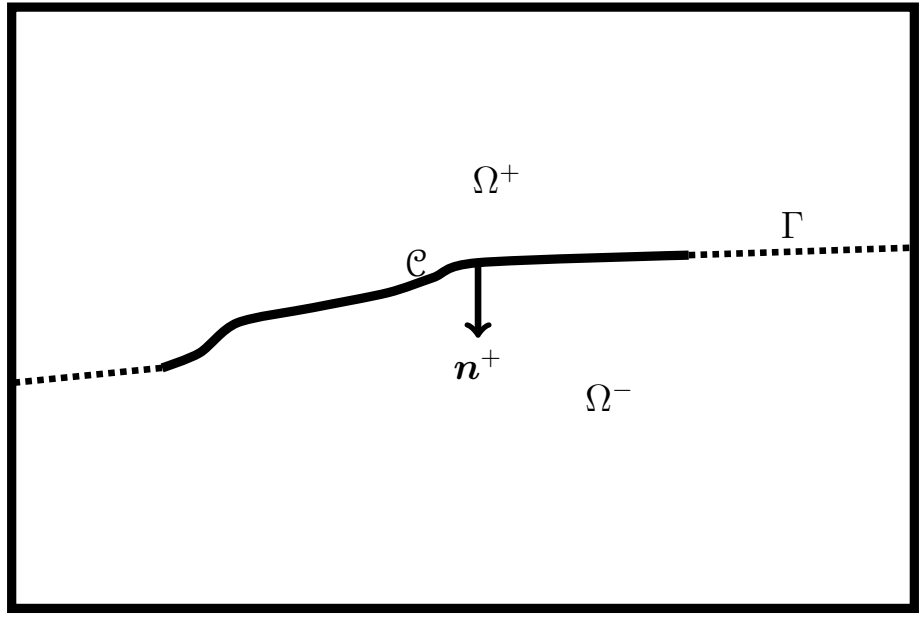

The constitutive  equation for the Cauchy stress tensor $\boldsymbol \sigma^{\rm por}$ is
\begin{equation}
\boldsymbol \sigma^{\rm por}(\boldsymbol u,p)  = \boldsymbol \sigma(\boldsymbol u) -\alpha\, p\, \boldsymbol I
\label{eq:Stressporo}
\end{equation}
where $\boldsymbol I$ is the identity tensor, $\boldsymbol u$ is the solid's displacement, $p$ is the fluid pressure, and $\boldsymbol \sigma$ is the effective linear elastic stress tensor,
\begin{equation}
\boldsymbol \sigma(\boldsymbol u) = 2G \boldsymbol{\varepsilon}(\uu) + \lambda (\nabla \cdot \uu) \boldsymbol I . %\mathbb{C} \nabla \boldsymbol u
\label{eq:Stresselas}
\end{equation}
Here, $G$ and  $\lambda$ are Lame parameters and $\alpha > 0$ is the dimensionless Biot coefficient.
Then the balance of linear momentum in the solid reads 
\begin{equation}
\partial_{tt} \uu -\rm{div} \,\boldsymbol \sigma^{\rm por}(\boldsymbol u,p) = \boldsymbol{f} \quad \text{in } \, \Omega\setminus \mathscr C,
%\rho_b g \nabla\,h,
\label{eq:balance}
\end{equation}
where $\boldsymbol f$ is a body force. In our model this will be a gravity loading term. This can be generalized to include other forces including tidal forces. In general, $\boldsymbol f$ may come from solving a potential equation (see Remark \ref{rem:ye_paper} below). 

For the case that includes friction, we need to add a viscous regularization in the elasticity equation. This is necessitated by the presence of a time derivative of displacement on the fracture surface equation requiring higher order estimates on the spatial derivative of the velocity. Accordingly, we present the regularized version of the elasticity equation. The stress is given by (with $\gamma>0$ a constant)

\begin{equation}
\boldsymbol \sigma(\boldsymbol u) = 2G \boldsymbol{\varepsilon}(\uu) + \lambda (\nabla \cdot \uu) \boldsymbol I +  \gamma \boldsymbol{\varepsilon}(\partial_t \uu) + \gamma (\nabla \cdot \partial_t \uu) \boldsymbol I. %\mathbb{C} \nabla \boldsymbol u
\label{eq:Stresselas2}
\end{equation}

We write the mechanics equation in terms of displacement $\uu$ as an unknown  by substituting \eqref{eq:Stresselas2} into \eqref{eq:Stressporo} and then substitute \eqref{eq:Stressporo} into the momentum balance equation \eqref{eq:balance}  
\begin{equation}
\partial_{tt} \uu - \gamma \nabla \cdot \boldsymbol{\varepsilon}(\partial_t \uu) - \gamma \nabla \cdot (\partial_t \uu \;  \boldsymbol I) - \nabla \cdot  \left( 2G\boldsymbol{\varepsilon}(\uu) + \lambda (\nabla \cdot \uu) \boldsymbol I -p\boldsymbol I\right) = \boldsymbol{f} \quad \text{in } \; \, \Omega \setminus \mathscr C.
%\rho_b g \nabla\,h,
\label{eq:bulk_mechanics}
\end{equation}

\begin{remark}
    Extension to heterogeneous and anisotropic elasticity coefficients with standard assumptions of strong ellipticity and bounded is quite straightforward as our results can be easily generalized. In this case, the Lame coefficients are replaced by a fourth order tensor $A_{ijkl}$ and the viscoelastic regularization parameter $\gamma$ is replaced by another fourth order tensor $C_{ijkl}$. This is supplemented with the standard conditions \cite{martins1987existence}: 
    \begin{align*}
        A_{ijkl}, C_{ijkl} \in L^\infty(\Omega \setminus \mathscr C), \\
        A_{ijkl} (\boldsymbol x) = A_{jikl} (\boldsymbol x) = A_{ijlk} (\boldsymbol x) = A_{klij} (\boldsymbol x) \quad  \text{ a.e. } \boldsymbol x \in \Omega \setminus \mathscr C \\
        C_{ijkl} \quad (\boldsymbol x) = C_{jikl} (\boldsymbol x) = C_{ijlk} (\boldsymbol x) = C_{klij} (\boldsymbol x) \text{ a.e. } \boldsymbol x \in \Omega \setminus \mathscr C\\
        \exists \alpha_A, \alpha_C > 0, A_{ijkl}A_{kl}A_{ij} \geq \alpha_A A_{ij}A_{ij}, C_{ijkl}A_{kl}A_{ij} \geq \alpha_C A_{ij}A_{ij},  \quad  \text{ a.e. } \boldsymbol x \in \Omega \setminus \mathscr C,
            \end{align*}
            for every symmetric matrix $A_{ij}$. 
\end{remark}
In the first part we discuss the linear case, ignoring contact mechanics, described by a purely linear elastic case: $\boldsymbol \sigma$ given by \eqref{eq:Stresselas}. In the second part, where we include the friction effects, we consider the linear viscoelastic model \eqref{eq:bulk_mechanics} in which case the elastic stress $\boldsymbol \sigma$ satisfies \eqref{eq:Stresselas2}. 
\subsection{Flow equation in $\Omega\setminus \CS$}
The velocity of the fluid $\boldsymbol v^D$ in $\Omega\setminus \mathscr C$ obeys Darcy's law:
\begin{equation}
\boldsymbol v^D = -\frac{1}{\mu_f}\boldsymbol K\big(\nabla\,p - \rho_f g \nabla\,\eta\big),
\label{eq:Darcy}
\end{equation}
where $\boldsymbol K$ is the absolute permeability and is a second order positive definite tensor, $\mu_f >0$ is the constant fluid viscosity, $g$ is the gravitation constant, and $\eta$ is the distance in the vertical direction, variable in space, but constant in time. The gravitational force acts in the negative $z$-direction and the flow moves in the direction of the negative of the pressure gradient superposed with the gravitational field.  

%To simplify, we also suppose that $\K$ is continuous.
 We let $\varphi^*$ denote the fluid content  of the medium. In a linear regime, we take
 \[\varphi^* = \varphi_0 + \alpha \nabla \cdot \uu + \frac{1}{M}. \]
 The first term on the right is the default porosity and the second term denotes the change in porosity due to elasticity mechanical effects.
 The fluid mass balance in $\Omega \setminus \CS$ reads
\begin{equation}
{\partial_t}\big(\rho_f \varphi^*\big)  + \nabla\cdot(\rho_f  \boldsymbol v^D) = q,
\label{eq:mbal}
\end{equation}
where $q$ is a mass source or sink term taking into account injection into or  leaking out of the reservoir. We will linearize the above equation in the following.  From \eqref{eq:mbal}
%we invoke the following approximation: 
%$\varphi^* \approx \varphi_0$  and $ \rho_f \approx \rho_{f,r} \exp(c_f(p-p_{r}))$ for the first term,

\begin{equation*}
\varphi^*  {\partial_t \rho_f}  +  \rho_f {\partial_t \varphi^*}   + \nabla\cdot(\rho_f  \boldsymbol v^D) = q.
\end{equation*}

 Let us neglect small quantities by means of the following approximations: 
\begin{align*}
&\frac{1}{M}(1+ c_f (p-p_r)) \approx \frac{1}{M}\ ,\ c_f\big(\varphi_0 + \alpha \nabla \cdot \uu +
\frac{1}{M} p\big) \approx c_f \varphi_0\ , \ \rho_{f,r} (1 + c_f (p-p_r)) \alpha \approx \rho_{f,r} \alpha,\\
& \rho_{f,r} (1 + c_f (p-p_r)) \vv^D \approx \rho_{f,r}\vv^D \ , \  \rho_{f,r} (1 + c_f (p-p_r))g \nabla\,\eta \approx \rho_{f,r}g \nabla\,\eta.
 \end{align*}

This gives a linear equation,  
% \begin{equation*}
% \varphi_0  \rho_{f,r} c_f {\partial_t p}  +  \rho_{f,r} \alpha \nabla \cdot {\partial_t \uu}   + \nabla\cdot(\rho_{f,r}  \boldsymbol v^D) = q,
% \end{equation*}
% whereby dividing by $\rho_{f,r}$ leads to
% \begin{equation}
% \varphi_0  c_f {\partial_t p}  +   \alpha \nabla \cdot {\partial_t \uu}   + \nabla\cdot \boldsymbol v^D = \tilde{q}.
% \label{eq:mbal3}
% \end{equation}
% Here $\tilde{q} = \frac{q}{\rho_{f,r}}$. The above computations linearizes the flow equation. 
% The flow equation in terms of pressure as unknown is obtained by substituting flux expression \eqref{eq:Darcy} into \eqref{eq:mbal3}, 

\begin{equation}
\varphi_0  \left(c_f + \frac{1}{M}\right)  {\partial_t p}  +   \alpha \nabla \cdot {\partial_t \uu}   - \nabla\cdot \dfrac{\boldsymbol K}{\mu_f} \left(\nabla p - \rho_{f,r} g \nabla \eta \right) = {q}
\label{eq:bulk_flow}
\end{equation}
where we have already substituted flux expression \eqref{eq:Darcy} in the above equation and the source term is suitably rescaled as well. 

We note that the mechanics equation \eqref{eq:bulk_mechanics} and the flow equation \eqref{eq:bulk_flow} form a closed system. In case of the absence of fracture, they form a coupled flow and mechanics system which is referred as the fully dynamic Biot equations. 

\begin{remark}
The above equations for the displacement are a simplified form of the more general dynamic Biot equations. These equations are obtained from the linearized first principles fluid/structure pore level interaction, using a homogenization approach \cite{mikelic2012theory}. They are obtained by making several assumptions including dropping the inertial term in the Navier-Stokes equations, supposing an incompressible or a slightly compressible pore fluid, using a linear elastic model (Navier’s equations) to describe the solid skeleton and linearization of the fluid/solid interface coupling conditions. The homogenization gives convolution terms for permeability and other higher order terms of displacement which are ignored in the above model. For further details, we refer to \cite{mikelic2012theory}. 
 \end{remark}

\subsection{Equation in the fracture surface, $\mathscr C$}
\label{subsec:fract}

The width $w$ of the fracture  is the jump of displacement, $\uu$, on the fracture surface, that is, 
 $$
w = - [\boldsymbol u]_\mathscr C \cdot \boldsymbol n^+ .
$$
with $\boldsymbol {n}^+ = - \boldsymbol {n}^{-}$ being the unit normal vectors to $\mathscr{C}$ exterior to the two sides $\Omega^{+}$ and $\Omega^{-}$ dividing $\Omega$ by the fracture surface $\mathscr{C}$. 

 In analogy with the Darcy flow in bulk,  the volumetric flow rate $\boldsymbol Q$ on $\mathscr C$ satisfies
$$
\boldsymbol Q = -\frac{w^3}{ 12\mu_f}\big(\overline{\nabla}\,p_c - \rho_f g \overline{\nabla}\,\eta\big),
$$
with $\frac{w^3}{12}$ being the permeability for the fracture flow since the permeability is a function of width. We recall that $\overline \nabla$ denotes the tangential derivative on the surface $\CS$. The conservation of mass in the fracture reads,
\begin{align}
{\partial_t} (\rho_f w) + \overline{\nabla} \cdot (\rho_f \boldsymbol Q) = q_W - q_L,
\label{eq:mbalc}
\end{align}
where $q_W$ is a known injection term into the fracture and $q_L$ is an unknown leakoff term from the fracture into the reservoir that guarantees the conservation of mass in the system. Here, $p_c$ is the pressure of the fluid on the fracture surface. We linearize the equation as in the case of bulk flow equation.  Expanding the first term and then using the approximation $\rho_f \approx \rho_{f,r}$ in the first term and the flux term,  $ \rho_f \approx \rho_{f,r} \exp(c_f(p-p_{r}))$ in the second term on the left hand side, 
\begin{align*}
\rho_{f,r} {\partial_t w}  + w c_f {\partial_t p}  + \overline{\nabla} \cdot (\rho_f \boldsymbol Q) = q_W - q_L.
\end{align*}
Dividing by $\rho_{f,r}$ throughout the equation, and redefining $c_{fc} = \frac{w c_f}{\rho_{f,r}}, \tilde{q}_W = \frac{q_W}{\rho_{f,r}}, \tilde{q}_L = \frac{q_L}{\rho_{f,r}}$ we obtain
\begin{align}
 {\partial_t w}  + c_{fc} {\partial_t p} + \overline{\nabla} \cdot  \boldsymbol Q = \tilde{q}_W - \tilde{q}_L.
\label{eq:mbalc3}
\end{align}
Here, we take $c_{fc} > 0$ as $w\geq 0$. Technically, we do not require strict positivity as the regularity of $p$ on the fracture can be shown to be inherited by the regularity of $p$ in the bulk where we have strict positivity of the coefficient in the time derivative term for the pressure. But we will simplify by assuming strict positivity.  \par
There are two places where $w$ appears in the description of the flow model on the fracture. One is in the fluid storage term, namely, $\partial_t(\rho_f w)$ and the second on the flux term where the permeability is a function of width. For the first term, we consider the width to be an unknown function which is natural as it is jump of the displacement on the fracture surface and hence an unknown. However, we linearize the flow equation in the fracture and take the width appearing in the flux description as the permeability to be known. We thus assume it to be a non-negative function. Since the medium is elastic with finite energy, $w$ in the permeability term is assumed to be bounded and vanishing on the boundary of the fracture. 
To make this distinction clear, we rewrite the above equation in the following form
\begin{align}
 {\partial_t w}  + c_{fc} {\partial_t p} + \overline{\nabla} \cdot  \boldsymbol Q = \tilde{q}_W - \tilde{q}_L.
\label{eq:mbalc3}
\end{align}

%berre2019flow, martin2005modeling, brenner2008mathematical
\begin{remark} \label{rem:model1}
 The fracture flow model presented, here,  namely a mixed dimensional model where Darcy flow on the fracture is described on a surface coupled to the bulk Darcy flow equations is widely used in practice (see e.g., \cite{berre2019flow}). We refer to  \cite{martin2005modeling} where similar models are derived for single phase flow. The model here is a simplified one; the pressure on the fracture is equal to the trace of the bulk pressure.  Such models are obtained when the permeability in the fracture is sufficiently large so that the pressure becomes continuous across the fracture width.  
 
 It is easy to motivate the fracture model by considering first a finite width, equi-dimensional model for the fracture subdomain instead of it being a surface. The appearance of the leak-off term $ \tilde q_L$ on the right-hand side follows from  integrating the  flow equation along the transverse direction in the fracture subdomain. Using the continuity of fluxes at the matrix/fracture interfaces, this yields a surface equation with the jump in the matrix flux term as a source term. The flow model on the fracture surface and the interface conditions can be generalized. For example, instead of the continuity of the pressure at the fracture-matrix interface, we may have a jump. With some extra work, the present work may be extended to take care of this generalization. We refer to \cite{kumar2020formal, list2020rigorous} for more discussions on the fracture flow model in such a setting. 
\end{remark}

\begin{remark} \label{rem:model2}
 The permeability in the fracture ${\frac{w^3}{12}}$ will be taken as a given function of a non-negative function $w = w(x,t)$. Here, we assume that the width, $w$, is a given non-negative function of $x, t$ with $w$ vanishing on the boundaries of the fracture surface. This necessitates the use of weighted spaces $H_w^1(\CS)$ as done above. Note that we make a distinction of the time derivative of $w$ appearing in the storage term appearing in the fracture flow model. Here, $w$ is assumed to be an unknown and is given by the jump in the displacement. This should be considered as a linearization of the non-linear flow equation on the fracture surface where the permeability term is considered to be a given known function whereas the time derivative term, which is linear, is an unknown. It can be interpreted as solving the coupled model in a time discrete manner where we take the permeability term from the previous time step. Well-posedness of the fully non-linear model with permeability as an unknown function remains open. 
\end{remark}

\subsection{Interface, boundary, and initial conditions}

\label{subsec:Interface}
\subsubsection{Fluid pressure and poroelasticity stress}
In the case when contact mechanics including friction is ignored, the balance of the  traction vector yields the interface conditions on each side (or face) of $\mathscr C$:
\begin{equation}
 (\boldsymbol \sigma^{\rm por}(\boldsymbol u,p)) \boldsymbol n^{*} = -p_c \boldsymbol n^{*}, \quad * = +, -.
\label{eq:tildsig}
\end{equation}
In view of the continuity of pressure across the fracture, this leads to ($* = +, -$. )
\begin{align}
 [(\boldsymbol \sigma^{\rm por}(\boldsymbol u,p))]_{\CS} \boldsymbol n^{*} = 0, \quad  \quad (\boldsymbol \sigma^{\rm por}(\boldsymbol u,p)) \boldsymbol n^{*} \cdot \boldsymbol n^* = -p_c, \quad (\boldsymbol \sigma^{\rm por}(\boldsymbol u,p)) \boldsymbol n^{*} \cdot \boldsymbol \tau^* = 0. 
\label{eq:tildsig2}
\end{align}
% with the normal component of $\boldsymbol \sigma^{\rm por}$,  
% $ \sigma^{\rm por}_n = \boldsymbol \sigma^{\rm por} \boldsymbol{n} \boldsymbol n$, being a given function of $p_c, w$. 

\subsubsection{Contact mechanics including friction}

The fracture flow model contains the width of the fracture both in the time derivative term and in the permeability description. However, the model equations have no means of ensuring that the width remains nonnegative. This means that the fracture surfaces may penetrate into each other. Moreover, there is no mechanism to stop this penetration. This is clearly unphysical. Motivated by the arguments from tribology, we will append the existing model by using a simple phenomenological law for the fracture surface that employs the normal stiffness of the interface and ensures the normal compliance of the interface. That is, on the fracture surface $\CS$, instead of \eqref{eq:tildsig2}, we instead impose,

  \begin{align}
\label{eq:normalcontact1} \left(\boldsymbol \sigma^{\rm{por}} \boldsymbol n^* \right) \cdot \boldsymbol n^{*} = -p_c - c_n([-[\boldsymbol u]_\CS \cdot \boldsymbol n^{+} - g_0]_{+}^{m_n}).      
  \end{align}  
  
Here, $g_0$ is the width of the fracture in the undeformed configuration and $[\cdot]_{+}$ is the positive cut of the function; $c_n$ is a non-negative constant, $1\leq m_n \leq 3$ for $d=3$ and $m_n < \infty$ for $d = 2$. 
The above modification in the interface condition imposes a resistance force when the fracture surfaces mutually penetrate. The $c_n, m_n$ are constants that characterize the material interface parameters. The restriction on $m_n$ is motivated by the fact that for $m_n+1$, $H^{1/2}(\CS)$ is continuously embedded in $L^{m_n+1}(\CS)$. This will be useful in obtaining the compactness results later. The above model is adapted from the classical work of Martins \& Oden \cite{martins1987existence} where a similar model is developed in the context of contact problems in mechanics. Here the modification includes the effect of flow pressure.

Next we consider the case when friction is also included in the contact mechanics. As mentioned above, we regularize the model by including viscous damping and consider \eqref{eq:bulk_mechanics}. The physical effect that is modelled here includes the effect that during contact, the modulus of the friction stress is bounded by a value that depends on a polynomial power of the penetration. If a limiting value is not attained, no relative sliding occurs, otherwise sliding occurs with a relative velocity opposite to the frictional stress. This is frequently referred as the stick-slip behavior. The following conditions are used to ensure this.

\medskip\medskip

\noindent
\text{Following \cite{martins1987existence}, the tangential component of the stress tensor at the fracture surface satisfies:}
  \begin{align}\label{eq:tangentialfrictioneqn1}
  \text{ If } -[\uu]_\CS \cdot \boldsymbol n^{+} \leq g_0 \text{ then } 
  |\sigma_T(\uu)| = 0, \text{ and }
  \text{ if } -[\uu]_\CS \cdot \boldsymbol n^{+}  > g_0 \text{ then } 
   |\sigma_T(\uu)| \leq c_T[(-[\uu]_\CS \cdot \boldsymbol n^{+} - g_0)_{+}]^{m_T}.
   \end{align}
{\flushleft 
\text{Moreover, it holds that} 
   \begin{align} \label{eq:tangentialfrictioneqn2}
    \text{ if } -[\uu]_\CS \cdot \boldsymbol n^{+} > g_0  \text{ then } 
    \left \{
    \begin{array}{llll}
     |\sigma_T(\uu)| < c_T[(-[\uu]_\CS \cdot \boldsymbol n^{+} - g_0)_{+}]^{m_T}  \Rightarrow \partial_t{\boldsymbol u}_T   = 0 & \\
      |\sigma_T(\uu)| = c_T[(-[\uu]_\CS \cdot \boldsymbol n^{+} - g_0)_{+}]^{m_T} \text{ and there exists } \lambda \geq 0, \partial_t{\uu}_T   = -\lambda \sigma_T(\uu).& 
  \end{array} \right.
\end{align}}
    
\medskip

\noindent    
In the reference \cite{martins1987existence}, the authors have considered a contact problem with a rigid foundation. This is different from our setting where we have a fractured medium and hence we have elasticity equation on both sides of the contact surface. Therefore, we have adapted these standard conditions of contact mechanics to our present case. Accordingly, instead of the normal component of the displacement, we have the jump of the normal component of the displacement at the fracture surface appearing in the above conditions. See also \cite{sofonea2012mathematical} for further discussion on friction and normal compliance models describing  contact mechanics in a rigid foundation case.  \par 
Here, we impose the same restriction on $m_T$ as on $m_n$, that is, $1\leq m_T \leq 3$ for 3D, and $m_T < \infty$ in 2D.  The first part in \eqref{eq:tangentialfrictioneqn1} describes the effect that there is no tangential stress when the fracture surfaces are either not in contact or merely touching without any effective compressive  stress. The second part provides a bound on the tangential stress when there is a compressive force and the width of the fracture goes below zero. The next statement in \eqref{eq:tangentialfrictioneqn2}  corresponds to slip and is a generalization of Coulomb's law in case slip takes place. The first statement suggests that there is zero slip if the tangential stress is less than a certain bound dependent on the displacement jump. It  models the onset of slip  that takes place when the tangential stress reaches a certain threshold value given by a certain polynomial function of displacement jump. At this threshold value,  the slip takes place in the direction of tangential stress and is a linear function of slip. 
To see that it is indeed a generalization of Coulomb's law, consider the case of no fluid being present, that is, $p_c =0$. In this case 
\[\mu = C|\sigma_n|^{\alpha_c}, \quad \alpha_c = \dfrac{m_T}{m_n} - 1\quad \text{ and } \quad C = \dfrac{c_T}{c_n^{\frac {m_T} {m_n}}} \] 
from where for $m_T = m_n$ we get $\alpha_c =0$ recovering Coulomb's law with $\mu$ being constant. \\
We will need the following quantity (Section \ref{sec:contact2}) later. Define,
\[j(\uu, \vv) = \int_{\CS} c_T[(-[\uu]_{\CS} \cdot \boldsymbol n^{+} - g_0)]^{m_T}|\vv_T|ds, \uu, \vv \in \V. \]
We note that $j$ is convex with respect to the second argument. \par
%We also let $\partial_2 j(\uu, \dot \uu) \in \V \times \V$ denote the partial subdifferential of $j$ with respect to the second argument. 
\par The reduced conservation of mass at the interface gives 
\begin{equation}
\frac{1}{\mu_f}[\boldsymbol K(\nabla\,p - \rho_{f,r} g \nabla\,\eta)]_{\mathscr C}\cdot \boldsymbol n^+ = \tilde q_L.
\label{eq:jumpp}
\end{equation}
The above interface condition results from our assumption of fracture as a surface. The net flux from the matrix to the fracture side appears as a right hand side in the fracture flow model and is given as a leakage term. We refer to the Remark \ref{rem:model1} for motivating the interface condition above. 

We need boundary conditions for the pressure and the displacement. For both, pressure and displacement, we take homogeneous Dirichlet boundary condition. This is taken for simplicity. It suffices to have a strictly positive measurable set on the boundary having Dirichlet boundary condition and for the remaining, we can prescribe a given stress, that is, Neumann boundary conditions. This is true for both the pressure $p$ as well as the displacement $\uu$. 

For the initial conditions, we prescribe the pressure $p_0$ at $t=0$. We take $p_0 \in H^1(\Omega)$. Moreover, initial displacement $\uu(t =0) = \uu_0, \partial_t \uu(t =0) = \uu_1$ is obtained by solving the elasticty equation \eqref{eq:bulk_mechanics}. Elliptic regularity thus implies $\uu_0 \in H^1(\Omega)^d$. The initial value of $w$ is obtained by computing the jump of displacement of $\uu$.
$$w(t =0) = -[\uu_0]_\mathscr C \cdot \boldsymbol n^+$$
 Moreover, for compatibility, we require that  the following initial condition is satisfied
\[\Big(\Big(c_f \phi_0 + \frac{1}{M}\Big) p + \alpha \nabla \cdot \uu\Big)\Big|_{t =0} = \Big(c_f + \frac{1}{M}\Big)\partial_t p_0 + \alpha \nabla \cdot \uu_0. \]
\subsection{Closed system of equations}

We arrive at the complete problem statement, called Problem (Q): Find pressure $p$, displacement $\uu$ satisfying the following set of equations
\begin{eqnarray*}
\partial_{tt} {\boldsymbol u} - \gamma \nabla \cdot \Big (\boldsymbol \varepsilon(\partial_t \uu) + (\nabla \cdot \partial_t \uu) \boldsymbol I \Big ) -  \nabla \cdot \, \Big ( 2G \boldsymbol{\varepsilon(\uu)} + \lambda (\nabla \cdot \uu) \boldsymbol I - \alpha  p  \Big) &=& \boldsymbol f \,\ \mbox{in}\ \Omega \setminus \mathscr C,\\[0.25cm]
% \boldsymbol \sigma^{\rm por}(\boldsymbol u,p) &=& 2G \boldsymbol{\varepsilon(\uu)} + \lambda (\nabla \cdot \uu) \boldsymbol I - \alpha\,p\,\boldsymbol I \,\ \mbox{in}\ \Omega \setminus \mathscr C, \\
  \left(\varphi_0 c_f + \frac{1}{M}\right)  {\partial_t p}  + \nabla\cdot \Big (-\rho_f  \frac{\boldsymbol K}{\mu_f}\big(\nabla\,p - \rho_f g \nabla\,\eta\big) \Big) &=& q \,\ \mbox{in}\ \Omega \setminus \mathscr C,\\
(Q) \hspace{1.6cm}  \partial_t(-\rho_f [\boldsymbol u]_\CS \cdot \boldsymbol n^{+}) - \overline{\nabla} \cdot \Big( \frac{ w^3}{12 \mu_f}(\overline{\nabla}\,p - \rho_{f} g \overline{\nabla}\,\eta)\Big) &=& \tilde q_W - \tilde q_L \,\ \mbox{in}\  \mathscr C,\\
 \hspace{1.6in} 
    % \text{ if } [\uu]_\CS  \leq g  \text{ then }  
    %  |\sigma_T(\uu)| = 0; 
    % \text{ if } [\uu]_\CS > g  \text{ then }  
    %  |\sigma_T(\uu)| = c_T[([\uu]_\CS - g)_{+}]^{m_T}  \text{ and there exists } \lambda \geq 0, \dot{\uu}_T  = -\lambda \sigma_T(\uu) \\
 %\left(\boldsymbol \sigma^{\rm{por}} \boldsymbol n^* \right) \cdot \boldsymbol n^{*} = -p_c - c_n[([\boldsymbol u]_\CS - g]_{+}^{m_n}.
% \boldsymbol \sigma^{\rm por}_\tau =  \boldsymbol \sigma^{\rm por}_\tau ([\boldsymbol{\dot{u}}]_{\mathscr{C}}, \theta) &=& \sigma^{\rm por}_n \mu([\boldsymbol{\dot{u}}]_{\mathscr{C}}, \theta) \boldsymbol{v}.
%  \\
\frac{1}{\mu_f}[\boldsymbol K(\nabla\,p - \rho_{f,r} g \nabla\,\eta)]_{\mathscr C}\cdot \boldsymbol n^+ &=& \tilde q_L \,\ \mbox{on}\ \mathscr C,\\
%w &=& -[\boldsymbol u]_\mathscr C \cdot \boldsymbol n^+.
\end{eqnarray*}
The normal compliance in the normal contact as given  in \eqref{eq:normalcontact1} on the fracture matrix interface $\CS$ can be written as, for $\,\ * = +,-$:
  \begin{align}
    \label{eq:normalcontact2} \left(  \Big (\gamma \boldsymbol \varepsilon(\partial_t \uu) + \gamma \nabla \cdot (\partial_t \uu) \boldsymbol I  + 2G \boldsymbol{\varepsilon(\uu)} + \lambda (\nabla \cdot \uu) \boldsymbol I - \alpha  p \boldsymbol I \Big) \boldsymbol n^* \right) \cdot \boldsymbol n^{*} = -p_c - c_n\Big(-[\boldsymbol u]_\CS \cdot \boldsymbol n^{+} - g_0\Big)_{+}^{m_n}.  
  \end{align}
    
Moreover, the tangential component of the stress tensor at the fracture surface satisfies \eqref{eq:tangentialfrictioneqn1} - \eqref{eq:tangentialfrictioneqn2}. 

% $\text{If } [\uu]_\CS  \leq g_0  \text{ then }  |\sigma_T(\uu)| = 0. $ Moreover,  \\[0.7em]

% ${\text{if } [\uu]_\CS > g_0} \text{ then } 
%   \left\{
%   \begin{array}{ll}
%      |\sigma_T(\uu)| \leq c_T[([\uu]_\CS - g_0)_{+}]^{m_T} \text{  and  } & \\[0.5em]
%      |\sigma_T(\uu)| < c_T[([\uu]_\CS - g_0)_{+}]^{m_T} \text{ then } \dot{\boldsymbol u}_T = 0 & \\[0.5em]
%      |\sigma_T(\uu)| = c_T[([\uu]_\CS - g_0)_{+}]^{m_T}  \text{ and there exists } \lambda \geq 0, \partial_t \uu_T = -\lambda \sigma_T(\uu).
%   \end{array} \right.
% $ \\[1em]

\noindent Here, $w(x,t)$ is a given function that vanishes on $\partial \CS$ and stays positive otherwise. The above system is complemented with suitable initial and boundary conditions as prescribed in Section \ref{subsec:Interface}. It is the Biot--Allard system in the matrix with certain simplifications and coupled to the fracture flow model. The first equation is the dynamical elasticity equation that includes the inertia term. The presence of $p$ in the elasticity equation takes into account the pore pressure of the fluid. The second  equation is the flow equation for the porous matrix. Here, $\varphi^*$ contains the term $\nabla \cdot {\boldsymbol u}$ and measures the change in porosity due to mechanical deformations. The next equation is the flow equation on the fracture. The right-hand side term $\tilde{q}_L$ accounts for the leakage. The remaining equations are interface conditions for the pressure and the stresses { on the fracture's faces}.

\begin{remark}\label{rem:ye_paper}
We contrast present work with the setting that is commonly used as in \cite{Brazda2017, deHoop2020} to account for a compressively pre-stressed case in the absence of fluid.  Following Brazda \etal \cite{Brazda2017}  in a prestressed Earth with pre-stress $\tsT^0$   
the gravitational potential $\phi^0$ satisfies Poisson's equation
\begin{equation}
    \Delta\phi^0=4\pi \Grav\rho^0,
    \label{eq:grav pot}
\end{equation}
with $\rho^0$ the initial density distribution of Earth,
and $\Grav$ Newton's universal constant of gravitation.
The equilibrium condition for the initial steady state is 
\begin{equation}
    \rho^0\nabla\phi^0=\nabla\cdot\tsT^0.
    \label{eq:ini stress}
\end{equation}
In this case, it is reasonable to assume that the width of the fracture is zero. The force equilibrium conditions are then described by  (\eg \cite[(4.57)]{Brazda2017}).
\begin{equation}
    \left\{
        \arraycolsep=1.4pt\def\arraystretch{1.7}
        \begin{array}{rl}
            \jmp{\ven\cdot\veu} = & 0 ,
            \\
            \jmp{\vetau_1(\veu)+\vetau_2(\veu)} = & 0 ,
            \\
            \vetau_\frc-\big(\ven\cdot \tsT^0 
            +\vetau_1(\veu)+\vetau_2(\veu)\big)_\TT = & 0 ,
        \end{array}
        \right.
        \quad \mbox{on } \mathcal{C},
    \label{eq:fric bc}
\end{equation}
with
\begin{equation}
    \left\{
        \arraycolsep=1.4pt\def\arraystretch{1.7}
        \begin{array}{rl}
            \vetau_1(\veu)=&\ven\cdot(\tsLaINI:\nabla\veu) ,
            \\
            \vetau_2(\veu)=&
            -\overline{\nabla} \cdot (\veu(\ven\cdot\tsT^0)) ,
        \end{array}
        \right.
    \label{eq:fric bc var}
\end{equation}
both of which are linear functions depending on $\veu$,
and the surface divergence is defined by $\overline{\nabla}\cdot\vef
=\nabla\cdot\vef-(\nabla\vef\cdot\ven)\cdot\ven$.  
{
Let $\sigma$ denote the magnitude of normal stress with positive sign
denoting compressive force:
}
%Indeed, $\sigma$ is linearly depending on $\veu$ by 
\begin{equation}
    \sigma(\veu)= -\ven\cdot \big(\ven\cdot \tsT^0
    +\vetau_1(\veu)+\vetau_2(\veu)\big).
    \label{eq:def sigma}
\end{equation}

The prestressed elasticity tensor is a linear map  $\tsLaINI: \mathbb{R}^{3 \times 3} \mapsto \mathbb{R}^{3 \times 3}$ such that $\tsLaINI:\nabla\veu$ represents the first Piola-Kirchhoff stress perturbation. In accordance with the Amontons-Coulomb law (\cf \cite[eq.\ (4a)]{Ruina1983}), the relation between the friction force $\tau_\frc$ and the compressive normal stress $\sigma$ satisfies
\begin{equation}
    \tau_\frc = \sigma \,\mu_f(\partial_t {\boldsymbol u}_T).
    \label{eq:frictionlaw}
\end{equation}
In the above, $\mu_f(\partial_t {\boldsymbol u}_T)$ is 
friction coefficient and is taken to be positive and bounded.   There are several  differences between this setting and our setting as evident from \eqref{eq:fric bc}. To compare with the above setting, we put $\tsT^0 \equiv 0$. The first contact condition in \eqref{eq:fric bc} yield zero width for the fracture whereas in our case, we take the width, computed as the normal jump of the displacement at the fracture surface, to be an unknown. In the above model, the two surfaces of the fracture are always in contact with each other as justified due to the presence of compressive pre-stress. In the current setting, the width may be non-zero. The \eqref{eq:fric bc}$_2$ refers to the continuity of forces across the fracture surface. In our case, this is modified to include the fluid pressure which acts even when the width is zero. This is an important difference with the above setting as we include the fracture fluid pressure that impacts the net normal stress in the poro-elastic case. Moreover, the friction as given in \eqref{eq:frictionlaw} is kinetic type as it models the slip motion unlike in the current setting of friction where the static friction acts until a critical value of shear stress is reached beyond which the kinetic friction model becomes active. This models the stick-slip behavior and is described by a variational inequality instead of an equation as in the above case. Moreover, we consider a normal compliance, that is, the normal stress $\sigma$ in our setting is taken as a function of the jump in the displacement $\uu$. This relation helps in getting the required estimates as the function space for $\sigma$ is in $H^{-1/2}(\CS)$ whereas the functional relation of $\sigma$ in terms of $\uu$ exploits that $\uu$ is in  $H^{1/2}(\CS)$ space so that the regularity improves and the integrals involved in the weak form are well-defined on the fracture surface. Finally, in the above setting, we include rate and state law where we introduce a state variable $\psi$ that measures the average contact maturity, 
\begin{equation}
    \tau_\frc=\sigma \,\mu_f(\partial_t {\boldsymbol u}_T,\psi).
    \label{eq:frictionlaw2}
\end{equation}
In the above, $\mu_f(\partial_t{\boldsymbol u}_T,\psi)$ is the 
friction coefficient but now depending on the state variable $\psi$ as well with this evolving with time following an ordinary differential relation.
% \begin{equation}
%     \dot \psi + 
%     \opG\big(\sigma,\ddt\sigma,\dot{\boldsymbol u}_T,\psi\big)=0 .
%     \label{eq:state ODE}
% \end{equation}
 We ignore the rate and state relationship in our work. However, both the presence of pre-stress and the rate and state friction model can be incorporated in our setting and will be considered in the future work. For further discussions on models for friction in the traditional setting of contact mechanics, we refer to the monographs \cite{han2002quasistatic, sofonea2012mathematical}.
% Amontons-Coulomb law states that the magnitude of instantaneous friction force is proportional to the compressive normal stress magnitude in the way that (\cf \cite[eq.\ (4a)]{Ruina1983}).
%     \begin{equation}
%         \opF\big(\sigma,s,\psi\big)=\sigma \,f(s,\psi).
%         \label{eq:Coulomb law}
%     \end{equation}
%     \label{ass:Coulomb law}

\end{remark}

\subsection{Simplified system: Absence of contact mechanics and friction}
Before we treat the full system of equations in Problem (Q), we first consider a simplified system of equations where we ignore the contact mechanics including friction. We call this  Problem (Q$_0$). The analysis of Problem (Q$_0$) is simpler but it contains the essence of the arguments. Furthermore, this is of independent interest itself as it extends the previous work of \cite{girault2015lubrication} to the case of fully dynamic Biot equation where inertial term is also included. The enrichment of the model by including contact mechanics and friction becomes an add-on to the displacement equation and is then treated using the existing theory of contact mechanics, e.g., \cite{martins1987existence} though here includes the coupling with the fluid flow. \\

\noindent {\bf{Problem (Q$_0$):}}\\[0.6em]
Find pressure $p$, displacement $\uu$ satisfying the following set of equations
\begin{eqnarray*}
\partial_{tt} {\boldsymbol u}  -  \nabla \cdot \,\left ( 2G \boldsymbol{\varepsilon(\uu)} + \lambda (\nabla \cdot \uu) \boldsymbol I - \alpha  p \right) &=& \boldsymbol f \,\ \mbox{in}\ \Omega \setminus \mathscr C,\\[0.25cm]
% \boldsymbol \sigma^{\rm por}(\boldsymbol u,p) &=& 2G \boldsymbol{\varepsilon(\uu)} + \lambda (\nabla \cdot \uu) \boldsymbol I - \alpha\,p\,\boldsymbol I \,\ \mbox{in}\ \Omega \setminus \mathscr C, \\
  \left(\varphi_0 c_f + \frac{1}{M}\right)  {\partial_t p}  + \nabla\cdot \left (-\rho_f  \frac{\boldsymbol K}{\mu_f}\big(\nabla\,p - \rho_f g \nabla\,\eta\big) \right) &=& q \,\ \mbox{in}\ \Omega \setminus \mathscr C,\\
(Q_0) \hspace{1.6cm}  \partial_t(-\rho_f [\boldsymbol u]_\CS \cdot \boldsymbol n^{+}) - \overline{\nabla} \cdot \Big( \frac{ w^3}{12 \mu_f}(\overline{\nabla}\,p_c - \rho_{f} g \overline{\nabla}\,\eta)\Big) &=& \tilde q_W - \tilde q_L \,\ \mbox{in}\  \mathscr C,\\
 \hspace{1.6in} 
    % \text{ if } [\uu]_\CS  \leq g  \text{ then }  
    %  |\sigma_T(\uu)| = 0; 
    % \text{ if } [\uu]_\CS > g  \text{ then }  
    %  |\sigma_T(\uu)| = c_T[([\uu]_\CS - g)_{+}]^{m_T}  \text{ and there exists } \lambda \geq 0, \dot{\uu}_T  = -\lambda \sigma_T(\uu) \\
 %\left(\boldsymbol \sigma^{\rm{por}} \boldsymbol n^* \right) \cdot \boldsymbol n^{*} = -p_c - c_n[([\boldsymbol u]_\CS - g]_{+}^{m_n}.
% \boldsymbol \sigma^{\rm por}_\tau =  \boldsymbol \sigma^{\rm por}_\tau ([\boldsymbol{\dot{u}}]_{\mathscr{C}}, \theta) &=& \sigma^{\rm por}_n \mu([\boldsymbol{\dot{u}}]_{\mathscr{C}}, \theta) \boldsymbol{v}.
%  \\
\frac{1}{\mu_f}[\boldsymbol K(\nabla\,p - \rho_{f,r} g \nabla\,\eta)]_{\mathscr C}\cdot \boldsymbol n^+ &=& \tilde q_L \,\ \mbox{on}\ \mathscr C.
%w &=& -[\boldsymbol u]_\mathscr C \cdot \boldsymbol n^+.
\end{eqnarray*}
The normal  contact on the fracture matrix interface $\CS$ ignores the normal compliance term and includes only the fluid pressure:
  \begin{align}
    \label{eq:normalcontact3} \left( (2G \boldsymbol{\varepsilon(\uu)} + \lambda (\nabla \cdot \uu) \boldsymbol I - \alpha  p \boldsymbol I) \boldsymbol n^* \right) \cdot \boldsymbol n^{*} = -p_c,  \quad  \,\ * = +,-.  
  \end{align}
Moreover, the tangential component of the stress tensor on  fracture matrix interface $\CS$ satisfies
$$\sigma_T(\uu) \equiv 0.$$
The interface condition across the fracture interface assumes the continuity of pressure, that is on $\CS$, 
$$p = p_c$$
where the pressure $p$ on $\CS$ is taken in the sense of traces.

% \begin{remark}
% In the above model, the mechanical deformations of both the porous matrix and that of fracture are included. 

% \end{remark}

\section{Weak formulation of  Problem (Q$_0$)} \label{sec:weak}

We introduce a weak formulation of problem (Q$_0$) and will analyze this. Note that this is a linear problem. It is a straightforward exercise to verify that provided the  data (right-hand side, coefficients, initial and boundary conditions) and the solution are sufficiently regular, the strong and weak formulations are equivalent.  Below we present two formulations that will be shown to be equivalent. 

\subsection{Formulation 1}

To simplify the notation, we omit the spaces in the inner products; if the domain of integration is not indicated, then it is understood that the integrals are taken over $\Omega^+ \cup \Omega^-$.  The conformal variational formulation of Problem (Q$_0$) reads: For given ${\boldsymbol f} \in L^2((\Omega \setminus \CS) \times ]0,T[)^d$, $\tilde q \in L^2(\Omega \times ]0,T[)$, and  $\tilde q_L \in L^2(0,T;H^{1}(\CS)')$, find  $\boldsymbol u \in L^\infty(0,T;\V)$, $\partial_t{\boldsymbol u} \in L^\infty(0,T;L^2(\Omega)) $, $\partial_{tt} \boldsymbol u \in L^2(0,T;\V^{'})$, \\ $p \in L^\infty(0,T;L^2(\Omega)) \cap L^2(0,T;Q)$, $p_c \in L^2(0,T;H^{\frac{1}{2}}(\CS))$, such that 
\begin{equation} 
\text{ for all } {\boldsymbol v} \in \V\,,\,(\partial_{tt} {\boldsymbol u}, {\boldsymbol v}) + 2 G \big(\varepsilon({\boldsymbol u}), \varepsilon({\boldsymbol v})\big) + \lambda \big(\nabla \cdot {\boldsymbol u},\nabla\cdot {\boldsymbol v}\big) 
- \alpha \big( p ,\nabla\cdot {\boldsymbol v}\big)
   +\big(p_c ,[{\boldsymbol v}]_{\CS} \cdot {\boldsymbol n}^+\big)_\CS = \big( {\boldsymbol f} ,{\boldsymbol v}\big)\,,\, 
\label{eq:varumix}
\end{equation}
\begin{align}
\label{eq:varpmix1}   \text{for all } \theta \in Q\,,\,\Big({\partial_t } \Big(\Big(\frac{1}{M} + c_f \varphi_0\Big) p +\alpha  \nabla\cdot {\boldsymbol u} \Big), \theta\Big)
&+ \frac{1}{\mu_f}\big( \boldsymbol K \nabla p - \rho_{f,r} g {\nabla} \eta ,\nabla \theta \big) = \big(\tilde q, \theta \big) + \langle \tilde{q}_L,\theta \rangle_{\CS},\\
\label{eq:varpmix2}  \text{for all } \theta_c \in Q\,,\, (c_{fc} \partial_t p_c, \theta_c)_{\CS} + \dfrac{d}{dt}  (-[{\boldsymbol u}]_\CS \cdot {\boldsymbol n}^+, \theta_c )_\CS &
\nonumber\\
+ \frac{1}{12 \mu_f} ( w^{\frac 3 2} (\overline{\nabla} p_c - \rho_{f,r} g \overline{\nabla} \eta)&, w^{\frac 3 2} \overline{\nabla} \theta_c )_\CS =  ( \tilde q_W , \theta_c )_\CS,
- \langle \tilde{q}_L, \theta_c \rangle_\CS ,
\end{align}
 subject to the initial condition,      
\begin{align}
\label{eqn:initial}
\Big(\Big(\frac{1}{M} + c_f \varphi_0\Big)p + \alpha \nabla \cdot {\boldsymbol u}\Big)\Big|_{t=0} = \Big(\frac{1}{M} + c_f \varphi_0\Big)p_0 + \alpha \nabla \cdot {\boldsymbol u}_0.
\end{align}
This weak formulation is obtained starting from the problem (Q$_0$) using the standard method of taking smooth test functions and then performing partial integration and using the boundary conditions. Note that in \eqref{eq:varpmix2}, the test function $\theta_c$ is the trace of $\theta$ on $\CS$ and $\gamma =0$. 
%This derivation is standard and will be left as an exercise.

\subsection{Formulation 2: Reduced formulation}

Here the leakage term is eliminated by adding the two mass balance equations in the fracture and in the matrix. Note that the leakage term $\tilde{q}_L$ has positive and negative signs in the mass balance equation in bulk and the fracture respectively, thereby cancelling while the two are summed up. The regularity of $p$ and $\theta$ allows us to take traces of these functions and the terms on the $\CS$ are thus well defined. The reduced formulation therefore reads:
For given ${\boldsymbol f} \in L^2((\Omega \setminus \CS) \times ]0,T[)^d$, $\tilde q \in L^2(\Omega \times ]0,T[)$, and $\tilde q_l \in L^2(0,T;H^{1}(\CS)')$, find  $\boldsymbol u \in L^\infty(0,T;\V)$, $\partial_t{\boldsymbol u} \in L^\infty(0,T;L^2(\Omega)) $, $\partial_{tt} \boldsymbol u \in L^2(0,T;\V^{'})$,  $p \in L^\infty(0,T;L^2(\Omega)) \cap L^2(0,T;Q)$, with $p_c$ being the trace of $p$, such that
\begin{equation} 
\text{for all } {\boldsymbol v} \in \V\,,\,(\partial_{tt} {\boldsymbol u}, {\boldsymbol v}) + 2 G \big(\varepsilon({\boldsymbol u}), \varepsilon({\boldsymbol v})\big) + \lambda \big(\nabla \cdot {\boldsymbol u},\nabla\cdot {\boldsymbol v}\big) 
- \alpha \big( p ,\nabla\cdot {\boldsymbol v}\big)
   +\big(p_c ,[{\boldsymbol v}]_{\CS} \cdot {\boldsymbol n}^+\big)_\CS  = \big( {\boldsymbol f} ,{\boldsymbol v}\big)\,,\, 
\label{eq:varu2}
\end{equation}
\begin{align}
\text{ for all } \theta \in Q\,,\,\Big({\partial_t }\Big(\Big(\frac{1}{M} + c_f \varphi_0\Big) p + \alpha  \nabla\cdot {\boldsymbol u} \Big) ,\theta \Big)
+ \frac{1}{\mu_f}\big( \boldsymbol K \nabla p  - \rho_{f,r} g {\nabla} \eta,\nabla \theta \big) +(c_{fc} \partial_t p_c, \theta)_\CS
\\
 - \dfrac{d}{dt}  ([{\boldsymbol u}]_\CS \cdot {\boldsymbol n}^+, \theta)_\CS + \frac{1}{12 \mu_f} \big ( w^{\frac 3 2} (\overline{\nabla} p_c - \rho_{f,r} g \overline{\nabla} \eta), w^{\frac 3 2} \overline{\nabla} \theta \big )_\CS = \big(\tilde q, \theta \big).
\label{eq:varp2}
\end{align}

% \subsection{Formulation 3: Further reduced formulation}

% Finally, we consider the displacement equation \eqref{eq:varu2} and solve this with given $p$. This provides us a mapping from $p$ to the solution of the displacement equation $\boldsymbol u$. Precisely, for $p \in L^\infty(0,T;L^2(\Omega)) \cap L^2(0,T;Q)$, we obtain a unique $\uu \in  L^\infty(0,T;\V), \partial_t{\boldsymbol u} \in L^\infty(0,T;L^2(\Omega))$. Moreover, the linearity of elastic equation implies that the solution operator $\mathcal{S}_u: p \rightarrow \uu$ is linear. 

% We accordingly get: For given ${\boldsymbol f} \in L^2((\Omega \setminus \CS) \times ]0,T[)^d$, $\tilde q \in L^2(\Omega \times ]0,T[)$, and $\tilde q_l \in L^2(0,T;H^{1}(\CS)')$, find  $p \in L^\infty(0,T;L^2(\Omega)) \cap L^2(0,T;Q)$, with $p_c$ being the trace of $p$, such that
% \begin{align}
% \text{for all } \theta \in Q\,,\,\Big({\partial_t }\Big(\Big(\frac{1}{M} + c_f \varphi_0\Big) p +\alpha  \nabla\cdot {\boldsymbol u} \Big) ,\theta \Big)
% + \frac{1}{\mu_f}\big( \boldsymbol K \nabla p -  \rho_{f,r} g {\nabla} \eta,\nabla \theta \big) +(c_{fc} \partial_t p_c, \theta)_\CS
% \\
%  - \dfrac{d}{dt} ([{\boldsymbol u}]_\CS \cdot {\boldsymbol n}^+, \theta)_\CS + \frac{1}{12 \mu_f} \big ( w^{\frac 3 2} (\overline{\nabla} p_c - \rho_{f,r} g \overline{\nabla} \eta), w^{\frac 3 2} \overline{\nabla} \theta \big )_\CS = \big(\tilde q, \theta \big),
% \label{eq:varp3}
% \end{align}

\subsection{Equivalence of formulations}

We first prove a preliminary result. We let $M = H_w^1(\CS)^\prime$ and define the bilinear form 
\[
\text{for all } q \in Q, \text{ for all } \xi \in M,\; \; b(q, \xi) = \langle \xi, q_c \rangle_\CS.
\]

\begin{proposition} \label{prop:coercivity}
There exists a constant $C$ such that 
\begin{align*}
\text{for all } \xi \in M, \quad \sup_{q \in Q} \dfrac{b(q,\xi)}{\|q\|_Q} \geq \dfrac{1}{C}\|\xi\|_{H_w^1(\CS)^\prime}.    
\end{align*}
\end{proposition}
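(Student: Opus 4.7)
The plan is to reduce the inf-sup inequality to the duality definition of the norm on $M = H_w^1(\CS)'$ by exploiting the continuous extension operator $E : H_w^1(\CS) \to Q$ provided by the theorem stated just before the proposition. Concretely, I will write
\[
\|\xi\|_{H_w^1(\CS)'} = \sup_{z \in H_w^1(\CS) \setminus \{0\}} \frac{\langle \xi, z\rangle_\CS}{\|z\|_{H_w^1(\CS)}},
\]
and show that every $z \in H_w^1(\CS)$ can be lifted to an element of $Q$ whose trace on $\CS$ is $z$, in a norm-controlled way.

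First, given any $z \in H_w^1(\CS)$, I would set $q := E(z) \in Q$. By the construction of the extension (the usual convention for a trace right-inverse), the trace $q_c$ on $\CS$ coincides with $z$. Continuity of $E$ gives a constant $C_E > 0$, depending only on $\Omega$ and $\CS$, such that
\[
\|q\|_Q \le C_E \,\|z\|_{H_w^1(\CS)}.
\]
Then by the definition of $b$,
\[
b(q,\xi) = \langle \xi, q_c \rangle_\CS = \langle \xi, z \rangle_\CS.
\]

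Second, for a fixed $\xi \in M$, I take the supremum over all $z \in H_w^1(\CS) \setminus \{0\}$ of the resulting quotient:
\[
\sup_{q \in Q} \frac{b(q,\xi)}{\|q\|_Q} \;\ge\; \sup_{z \in H_w^1(\CS)\setminus\{0\}} \frac{\langle \xi, z\rangle_\CS}{\|E(z)\|_Q} \;\ge\; \frac{1}{C_E} \sup_{z \in H_w^1(\CS)\setminus\{0\}} \frac{\langle \xi, z\rangle_\CS}{\|z\|_{H_w^1(\CS)}} \;=\; \frac{1}{C_E}\|\xi\|_{H_w^1(\CS)'},
\]
which is the claimed inequality with $C = C_E$.

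There is essentially no obstacle beyond knowing that the extension operator $E$ exists and is a right-inverse to the trace map on $\CS$. The only point worth checking is that the ``trace'' of a $Q$-function $q$ on $\CS$ (i.e.\ the object $q_c$ appearing in the definition of $b$) is exactly what $E$ returns one unit of, so that $(E(z))_c = z$ as elements of $H_w^1(\CS)$. This follows from the construction of $Q$ as $\{p \in H^1(\Omega) : p_c \in H_w^1(\CS)\}$ together with the density of $W^{1,\infty}(\CS)$ in $H_w^1(\CS)$ used in building $E$ in \cite{girault2015lubrication}; once that is in hand, the estimate is immediate.
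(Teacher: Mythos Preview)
Your proof is correct and follows essentially the same approach as the paper: both use the continuous extension operator $E : H_w^1(\CS) \to Q$ as a right-inverse of the trace, and bound the dual norm via the continuity constant of $E$. Your write-up is in fact slightly more explicit than the paper's in noting that $(E(z))_c = z$ is needed for $b(E(z),\xi) = \langle \xi, z\rangle_\CS$.
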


We just sketch the proof for the sake of completeness. Let $C$ be the continuity constant of the extension mapping $E: H_w^1(\CS) \mapsto Q$; $C$ depends only on $\Omega$ and $\CS$, and $E$ satisfies 
\begin{align*}
\text{for all } \theta \in H_w^1(\CS), \quad \|E(\theta)\|_Q \leq C\|\theta\|_{H_w^1(\CS)}.    
\end{align*}
It follows that
\begin{align*}
\text{for all }    \xi \in M :\quad \|\xi\|_{H_w^1(\CS)^\prime} = \sup_{z \in H_w^1(\CS)}  \dfrac{\langle \xi, z \rangle_\CS}{\|z\|_{H_w^1(\CS)}} = 
\dfrac{\langle \xi, E(z) \rangle_\CS}{\|z\|_{H_w^1(\CS)}} \leq C \sup_{q \in Q} \dfrac{b(q,\xi)}{\|q\|_Q}.
\end{align*}
This completes the argument.

We state the equivalence of the two formulations, formulation 1 and the reduced formulation 2. 

\begin{lemma}
Let the data $\boldsymbol f \in L^\infty (0,T; L^2(\Omega)^d), \tilde{q} \in L^2(\Omega \times (0,T))$, $\tilde{q}_W \in L^2(\CS \times (0,T))$, $p_0 \in Q$, and $\boldsymbol u_0, \uu_1 \in V$. Let $p \in L^\infty(0,T;L^2(\Omega)) {\displaystyle \cap} L^2(0,T;Q) $, $\boldsymbol u \in L^\infty(0,T;\V)$, $\partial_t{\boldsymbol u} \in L^\infty(0,T;L^2(\Omega)) $, $\partial_{tt} \boldsymbol u \in L^2(0,T;\V^{'})$, $\tilde{q}_L \in W^{-1,\infty}(0,T;H_w^1(\CS)^\prime$ solve formulation 1. Then $p, \uu$  solve formulation 2. Conversely, $p \in L^\infty(0,T;L^2(\Omega)) {\displaystyle \cap} L^2(0,T;Q) $, $\boldsymbol u \in L^\infty(0,T;\V)$, $\partial_t{\boldsymbol u} \in L^\infty(0,T;L^2(\Omega)) $, $\partial_{tt} \boldsymbol u \in L^2(0,T;\V^{'})$, solving formulation 2 also solve formulation 1.
\end{lemma}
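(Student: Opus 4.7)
The two formulations differ only in the treatment of the leakage flux $\tilde q_L$: Formulation~1 retains it as an independent datum coupling the bulk and fracture pressure equations with opposite signs, while Formulation~2 eliminates it by summation. The momentum equations \eqref{eq:varumix} and \eqref{eq:varu2} are identical, so only the pressure equations require attention. Throughout, I will exploit the fact that every $\theta\in Q$ admits a trace $\theta_c\in H_w^1(\CS)$ by definition of $Q$, and that this trace is the quantity against which the $\CS$-duality $\langle\tilde q_L,\theta\rangle_\CS$ is evaluated.

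For the forward implication (Formulation~1 $\Rightarrow$ Formulation~2), I would take an arbitrary $\theta\in Q$, use $\theta$ as test function in \eqref{eq:varpmix1} and its trace $\theta_c$ in \eqref{eq:varpmix2}, and add. The leakage contributions $\langle\tilde q_L,\theta\rangle_\CS$ and $-\langle\tilde q_L,\theta_c\rangle_\CS$ cancel, and the resulting identity is precisely \eqref{eq:varp2}. The initial condition \eqref{eqn:initial} and the regularity classes for $\boldsymbol u$ and $p$ transfer without change.

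For the converse (Formulation~2 $\Rightarrow$ Formulation~1), I need to manufacture a $\tilde q_L\in W^{-1,\infty}(0,T;H_w^1(\CS)')$ from the Formulation~2 solution. The plan is to \emph{define} it by reading off the fracture balance we want to recover: for every $\zeta\in H_w^1(\CS)$, set
\begin{align*}
\langle\tilde q_L,\zeta\rangle_\CS := \ & (\tilde q_W,\zeta)_\CS - (c_{fc}\partial_t p_c,\zeta)_\CS + \tfrac{d}{dt}\bigl([\boldsymbol u]_\CS\cdot\boldsymbol n^+,\zeta\bigr)_\CS \\
& - \tfrac{1}{12\mu_f}\bigl(w^{3/2}(\overline\nabla p_c-\rho_{f,r}g\overline\nabla\eta),\, w^{3/2}\overline\nabla\zeta\bigr)_\CS .
\end{align*}
The regularities $p_c\in L^2(0,T;H_w^1(\CS))$, $[\boldsymbol u]_\CS\cdot\boldsymbol n^+\in L^\infty(0,T;H^{1/2}(\CS))$, and $\tilde q_W\in L^2(\CS\times(0,T))$ make each term a continuous linear functional of $\zeta$ on $H_w^1(\CS)$, with the $d/dt$-term understood distributionally in $t$; this is what places $\tilde q_L$ in $W^{-1,\infty}(0,T;H_w^1(\CS)')$ rather than in a stronger space. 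Equation~\eqref{eq:varpmix2} then holds by construction. To recover \eqref{eq:varpmix1}, substitute the definition of $\langle\tilde q_L,\theta_c\rangle_\CS$ into \eqref{eq:varp2} tested with an arbitrary $\theta\in Q$; the fracture-supported integrals cancel, and \eqref{eq:varpmix1} is what remains.

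The main technical obstacle is the time regularity. Because Formulation~2 only controls the combination $(c_{fc}\partial_t p_c,\theta)_\CS - \tfrac{d}{dt}([\boldsymbol u]_\CS\cdot\boldsymbol n^+,\theta)_\CS$ and not its two constituents separately, a distributional-in-$t$ interpretation is unavoidable and must be handled by a density argument against $C_c^\infty((0,T))\otimes H_w^1(\CS)$. The bounded extension operator $E:H_w^1(\CS)\to Q$ supplied by Proposition~\ref{prop:coercivity} then guarantees that functionals defined on traces can be tested against the full $Q$-valued test functions used in Formulation~1, which closes the equivalence.
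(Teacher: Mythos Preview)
Your forward implication is identical to the paper's. For the converse you take a genuinely different route. The paper does \emph{not} define $\tilde q_L$ from the fracture balance; instead it restricts the combined equation \eqref{eq:varp2} (after integrating over $(0,t)$) to the kernel $Q_0=\{q\in Q:q_c=0\}$, observes that all $\CS$-supported terms vanish there, identifies the remaining bulk terms as a functional $l(t)\in Q'$ annihilating $Q_0$, and then invokes the inf--sup condition of Proposition~\ref{prop:coercivity} to produce $\tilde Q_L\in L^\infty(0,T;H_w^1(\CS)')$ with $\langle l,\theta\rangle=\langle\tilde Q_L,\theta_c\rangle_\CS$; finally $\tilde q_L:=\partial_t\tilde Q_L$. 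Your approach is more elementary---it sidesteps the inf--sup machinery entirely and makes no essential use of Proposition~\ref{prop:coercivity} (your closing remark about the extension operator $E$ is in fact unnecessary, since any $\theta\in Q$ already has $\theta_c\in H_w^1(\CS)$ by definition). The paper's approach, by integrating in time first and differentiating only at the end, handles the time regularity more transparently: it obtains $\tilde Q_L\in L^\infty(0,T;H_w^1(\CS)')$ directly and reads off $\tilde q_L\in W^{-1,\infty}$ as its derivative, whereas you must argue term by term that the distributional time derivatives assemble into this space. Both arguments are correct; yours is shorter, the paper's is closer to the standard saddle-point framework and slightly cleaner on the temporal bookkeeping.
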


\begin{proof}
Take for simplicity $\tilde{q}_W =0$. Using formulation 1, by adding the two equations \eqref{eq:varpmix1} and \eqref{eq:varpmix2} for the pressure equations  in the matrix and on the fracture enables elimination of $\tilde q_L$. We obtain \eqref{eq:varp2}. This completes the derivation of obtaining formulation 2 starting from formulation 1. 
% \begin{align}
% \nonumber\forall \theta \in H^1(\Omega)\,,\,\Big(\frac{\partial }{\partial t}\big((\frac{1}{M} + c_f \varphi_0) p +\alpha  \nabla\cdot {\boldsymbol u} \big) ,\theta \Big)
% + \frac{1}{\mu_f}\big( K \nabla p,\nabla \theta \big) +(c_{fc} \partial_t p_c, \theta)_\CS\\
%  + \dfrac{d}{dt}  ([{\boldsymbol u}]_\CS \cdot {\boldsymbol n}^+, \theta)_\CS + \frac{1}{12 \mu_f} \big ( w^{\frac{3}{2}} (\overline{\nabla} p_c - \rho_{f,r} g \eta), w^{\frac{3}{2}} \overline{\nabla} \theta \big )_\CS = \big(\tilde q, \theta \big).
% \label{eq:varp4}
% \end{align}

Next, we show the converse. We begin with formulation 2. Let us define the kernel of $b(\cdot, \cdot) \in Q$ as
\[
Q_0 = \{q \in Q; \text{ for all } \xi \in M, b(q, \xi) = 0 \} = \{q \in Q; q_c = 0\}.
\]
% We define the polar set  $Q_0^0$ of $Q_0$ by
% \[ Q_0^0 = \{\xi \text{ for all } \theta \in Q_0, \langle \xi, \theta \rangle = 0\}. \]
We observe that by integrating \eqref{eq:varp2} over $(0,t)$, testing with $\theta \in Q_0$, we have 
\begin{align*}
\int_{\Omega^+ \cup\, \Omega^-} \Big(\Big(\frac{1}{M} + c_f \varphi_0\Big)p+ \alpha \nabla \cdot \boldsymbol u \Big)(t) \theta dx - \int_{\Omega^+ \cup \Omega^-} \Big(\Big(\frac{1}{M} + c_f \varphi_0\Big)p_0+ \alpha \nabla \cdot \boldsymbol u_0 \Big) \theta dx+\\
+ \int_0^t \int_{\Omega^+ \displaystyle \cup \Omega^-} \Big(\frac{1}{\mu_f}  \boldsymbol K (\nabla p(\tau) - \rho_{f,r} g \nabla \eta ) \cdot \nabla \theta - \tilde{q}(s) \theta \Big) dx d\tau  = 0. 
\end{align*}
{We identify the left-hand side with a linear functional, $l(t) : Q \to \mathbb{R}$, $\ \theta \to \langle l(t), \theta \rangle$} a.e. on $(0,T)$. The equation above therefore can be written as 
\[ \text{ for all } \theta \in Q_0, \langle l(t), \theta \rangle = 0. \]

%, by
%\begin{align*}
%\langle{l(t)}, \theta \rangle =   \int_{\Omega^+ \cup \Omega^-} \left((\frac{1}{M} + c_f \phi_0)p+ \alpha \nabla \cdot \boldsymbol u \right)(t) \theta - \int_{\Omega^+ \cup \Omega^-} \left((\frac{1}{M} + c_f \phi_0)p_0+ \alpha \nabla \cdot \boldsymbol u_0 \right) \theta +\\
%\int_0^t \int_{\Omega^+ \cup \Omega^-} \left(\frac{1}{\mu_f}  K (\nabla p(s) - \rho_{f,r} g \nabla \eta ) \cdot \nabla \theta - \tilde{q}(s) \theta \right).  
%\end{align*}
Denoting by $Q^\prime$ the dual space of $Q$, the above assumptions on $p$ and $\uu$ imply that $l \in L^\infty(0,T;Q^\prime)$. Using Proposition \ref{prop:coercivity}, properties of $H_w^1(\Omega)$ from Theorem \ref{th:weightedspace}, it follows then that there exists a unique $\tilde{Q}_L \in L^\infty(0,T; H_w^1(\CS)^\prime)$ such that a.e. in $(0,t)$, $\langle l, \theta \rangle = b(\theta, \tilde{Q}_L) = \langle \tilde Q_L, \theta_c \rangle_\CS$. 
That is, for all $\theta \in Q$ 
\begin{multline*}
\int_{\Omega^+ \cup\, \Omega^-} \Big(\Big(\frac{1}{M} + c_f \varphi_0\Big)p+ \alpha \nabla \cdot \boldsymbol u \Big)(t) \theta dx - \int_{\Omega^+ \cup \Omega^-} \Big(\Big(\frac{1}{M} + c_f \varphi_0\Big)p_0+ \alpha \nabla \cdot \boldsymbol u_0 \Big) \theta dx +\\
+ \int_0^t \int_{\Omega^+ \displaystyle \cup \Omega^-} \Big(\frac{1}{\mu_f}  \boldsymbol K (\nabla p(\tau) - \rho_{f,r} g \nabla \eta ) \cdot \nabla \theta - \tilde{q}(\tau) \theta \Big) dx d\tau = \langle \tilde{Q}_L, \theta_\CS \rangle_{\CS}. 
\end{multline*}
After differentiating with respect to time,  $\text{for all } \theta \in Q$, we get 
\begin{multline*}
\int_{\Omega^+ \cup \Omega^-} \partial_t\Big(\Big(\frac{1}{M} + c_f \varphi_0 \Big)p+ \alpha \nabla \cdot \boldsymbol u \Big) \theta dx 
\\
+ \int_{\Omega^+ \cup \Omega^-} \Big(\frac{1}{\mu_f}  \boldsymbol K (\nabla p - \rho_{f,r} g \nabla \eta ) \cdot \nabla \theta - \tilde{q} \theta \Big) dx = \langle \partial_t \tilde {Q}_L, \theta_c \rangle_\CS. 
\end{multline*}
By setting $\tilde q_L = \partial_t \tilde{Q}_L$ and comparing this with
\begin{multline}
\text{ for all } \theta \in Q\,,\,\Big({\partial_t}\Big(\Big(\frac{1}{M} + c_f \varphi_0\Big) p +\alpha  \nabla\cdot {\boldsymbol u} \Big) ,\theta \Big)
+ \frac{1}{\mu_f}\big( (\boldsymbol K \nabla p  - \rho_{f,r} g \nabla \eta) ,\nabla \theta \big) \\
  =-(c_{fc} \partial_t p_c, \theta)_\CS  +    (\partial_t[{\boldsymbol u}]_\CS \cdot {\boldsymbol n}^+, \theta)_\CS - \frac{1}{12 \mu_f} \big ( w^{\frac{3}{2}} (\overline{\nabla} p_c - \rho_{f,r} g \overline \nabla \eta), w^{\frac{3}{2}} \overline{\nabla} \theta \big )_\CS + \big(\tilde q, \theta \big), 
  \label{eq:varp5}
\end{multline}
we get 
\begin{align}
-(c_{fc} \partial_t p_c, \theta)_\CS  +   (\partial_t[{\boldsymbol u}]_\CS \cdot {\boldsymbol n}^+, \theta)_\CS - \frac{1}{12 \mu_f} \big ( w^{\frac{3}{2}} (\overline{\nabla} p_c - \rho_{f,r} g  \overline{\nabla}\eta), w^{\frac{3}{2}} \overline{\nabla} \theta \big )_\CS  = \langle \tilde{q}_L, \theta \rangle_\CS.    
\end{align}
\end{proof}
%The equivalence of formulation 2 and formulation 3 is straightforward and is left as an exercise. 
\section{Well-posedness of the Problem ($Q_0$)} \label{sec:wellposedness}

The existence and uniqueness of solution follows the familiar steps. We first establish a finite dimensional approximation of the solution in the conformal spaces. Then we use these approximations as test functions to obtain estimates that are independent of the dimension of the approximation spaces. We note that the estimates are obtained in Hilbert spaces (reflexive Banach spaces) and therefore bounded subsets are weakly sequentially precompact. Accordingly, we can extract a subsequence that is weakly converging in these spaces. The limit functions  satisfy the continuous equations due to the linearity of the equation. 

\subsection{Galerkin procedure for finite dimensional approximation}
We take the formulation 2 so that the leakage term is eliminated. We consider a semi-discrete Galerkin scheme. Let $\{\theta_i\}_{i = 1}^{\infty}$ form a basis for $Q$, that is, for any $m\geq 1$, we approximate $Q$ by $Q_m$ having the basis $\{\theta_i, i\leq m\}$ and $Q = \overline{\bigcup_{m \geq 1} Q_m}$.
We define the Galerkin approximation  $p_m$  of $p$
\[p_m(t) = \sum_{j = 1}^m P_j(t) \theta_j,\]
with $p_j = H^2(0,T), 1\leq j\leq m$. 
Similarly, considering $\{\boldsymbol{w}_i\}_{i = 1}^\infty$ be a sequence of functions $\boldsymbol w_i \in V$ for all $i$ and let  $\boldsymbol w_i, i = 1, \ldots, m$ span the subspace $V_m$ of $V$ with $V = \overline{\bigcup_{m \geq 1} V_m}$. To approximate the solution $\uu_m: \Omega^+ \bigcup \Omega^- \times [0,T] \rightarrow \mathbb{R}^d, d = 2,3$ of mechanics equation \eqref{eq:varumixfr1}, we define 
\[\uu_m(\boldsymbol x,t) :=  \sum_{j=1}^m  U_j(t) \boldsymbol w_j(\boldsymbol x). \]
The solution of the fracture pressure $p_{cm}$ is obtained by taking the trace of $p_m$ on $\CS$.
% For the leakage term, we have
% \[\widetilde{q}_{m,L} (t) = \sum_{j=1}^m Q_j(t)\theta_{j,c}.\]
The coefficients of the approximations are chosen so that the model equations \eqref{eq:varumixfr1}-\eqref{eq:varpmixfr3} lead to the following system of coupled ODEs:
\begin{multline}
\noindent \text{For } \boldsymbol{w}_i, i = 1, \ldots,m,\\
 (\partial_{tt} {{\boldsymbol u}_m}, {\boldsymbol w}_i) + 2 G \big(\varepsilon({\boldsymbol u}_m), \varepsilon({\boldsymbol w_i})\big) + \lambda \big(\nabla \cdot {\boldsymbol u}_m,\nabla\cdot {\boldsymbol w_i}\big)  - \alpha \big( p_m ,\nabla\cdot {\boldsymbol w_i}\big) \\
 +\big(p_{cm} ,[{\boldsymbol w_i}]_{\CS} \cdot {\boldsymbol n}^+\big)_\CS = \big( {\boldsymbol f} ,{\boldsymbol w_i}\big) 
\label{eq:varumix10}    
\end{multline} 
for $\theta_i, i = 1,\ldots,m$, \\[0.4em]
\begin{align}
   \Big({\partial_t}\Big(\Big(\frac{1}{M} + c_f \varphi_0\Big) p_m +\alpha  \nabla\cdot {\boldsymbol u}_m \Big), \theta_i \Big)
+ \frac{1}{\mu_f}\big( \boldsymbol K \nabla p_m - \rho_{f,r} g  \nabla  \eta,\nabla \theta_i \big) &= \big(\tilde q, \theta_i \big) + \langle \tilde{q}_L,\theta_i \rangle_{\CS}, \label{eq:varpmix10} 
\end{align}
and for $\theta_{i,c}, i=1, \ldots, m$,\\  
\begin{multline}
(c_{fc} \partial_t p_{cm}, \theta_{i,c})_{\CS} +   (-\partial_t[{\boldsymbol u}_m]_\CS \cdot {\boldsymbol n}^+, \theta_{i,c} )_\CS + \\
+ \frac{1}{12 \mu_f} ( w^{3/2} (\overline{\nabla} p_{cm} - \rho_{f,r} g \overline \nabla  \eta), w^{3/2} \overline{\nabla} \theta_{i,c} )_\CS =  ( \tilde q_W , \theta_{i,c} )_\CS
- \langle \tilde{q}_L, \theta_{i,c} \rangle_\CS.
\label{eq:varpmix11}
\end{multline}
The initial conditions satisfy
\begin{align*}
    \uu_m(0) &= \uu_0^m = \sum_{i = 1}^m \alpha_i^m \boldsymbol{w}_i \rightarrow \uu_0 \text{ in } V \text{ as } m \rightarrow \infty,\\
    \partial_t \uu_m(0) &= \uu_1^m = \sum_{i = 1}^m \beta_i^m \boldsymbol{w}_i \rightarrow \uu_1 \text{ in } L^2(\Omega) \text{ as } m \rightarrow \infty,
\end{align*}
and analogously for the pressure $p_m(0)$.  This forms a system of ODEs and we can show the existence of a solution  by showing the existence for this system. \\
% For $1\leq i \leq m, \forall \theta_i \in H^1(\Omega)\,,\,$
% \begin{multline}
%  \Big(\big((\frac{1}{M} + c_f \varphi_0) \partial_t p_m +\alpha  \nabla\cdot {\partial_t \boldsymbol u_m} \big) ,\theta_i \Big)
%  + \frac{1}{\mu_f}\big( \boldsymbol K \nabla \partial_t p_m,\nabla \theta_i \big) +(c_{fc}  \partial_t p_{cm}, \theta_i)_\CS \\
%   + ([{\partial_t \boldsymbol u}]_\CS \cdot {\boldsymbol n}^+, \theta_i)_\CS + \frac{1}{12 \mu_f} \big ( w^{\frac{3}{2}} (\overline{\nabla} p_{cm} - \rho_{f,r} g \eta), w^{\frac{3}{2}} \overline{\nabla} \theta_i \big )_\CS 
%  = \big(\tilde q, \theta \big),
% \label{eq:varp4}
% \end{multline}
We define the following matrices

\begin{tabular}{ l l l}
     \hspace{-1cm} $A^{up}_{ij} =  \int_{\Omega^{+} \cup \Omega^{-}} \theta_j \nabla \cdot \boldsymbol w_i dx$, & $C_{\uu ij} = \int_\Omega  \boldsymbol w_i \boldsymbol w_j dx$,  & \hspace{-2cm} $B_{ij} = \int_{\Omega^{+} \cup \Omega^{-}} \alpha \left(\nabla \cdot {\boldsymbol w}_j \right) \left (\nabla \cdot {\boldsymbol w}_i \right) dx$\\[1em]
      \hspace{-1cm}  $C_{p ij} = \int_\Omega \Big(\frac{1}{M}+c_f \varphi_0 \Big) \theta_i \theta_j dx$, &  $C_{pc ij} = \int_\CS c_{fc}   \theta_{i,c} \theta_{j,c} ds$, & \hspace{-2cm} $L_{ij} = \int_{\Omega^{+} \cup \Omega^{-}} \frac{1}{\mu_f} {\boldsymbol K}\nabla \theta_j \cdot \nabla \theta_i dx$, \\[1em]
      \hspace{-1cm}  $M_{ij}^{upc} = -\int_{\CS}[\boldsymbol w_j]_{|\CS} \cdot {\boldsymbol n}^{+} \theta_{i,c} ds$, & $W_{ij} = \int_{\CS} \dfrac{w^3}{12 \mu_f} \bar \nabla \theta_{j,c} \cdot  \bar \nabla \theta_{i,c} ds$, &  {} \\[1em]
      \hspace{-1cm}  $E_{1ij} = \int_{\Omega^{+} \cup \Omega^{-}} 2G {\boldsymbol \varepsilon}({\boldsymbol w}_j) : {\boldsymbol \varepsilon}({\boldsymbol w}_i) dx$, & $E_{2ij} = \int_{\Omega^{+} \cup \Omega^{-}} \lambda ({\nabla \cdot} {\boldsymbol w}_j) ({\nabla \cdot} {\boldsymbol w}_i)dx$. & {} \\[1em] 
\end{tabular}

% \begin{array}{lll}
% $
%  A^{up}_{ij} =  \int_\Omega \theta_j \nabla \cdot \boldsymbol w_i dx, \quad
% A^{upc}_{ij} =  \int_\Omega \theta_j \nabla \cdot \boldsymbol w_i dx, \quad
% B_{ij} = \int_{\Omega^{+} \cup \Omega^{-}} \alpha \left(\nabla \cdot {\boldsymbol w}_j \right) \left (\nabla \cdot {\boldsymbol w}_i \right) dx\\
%  C_{p ij} = \int_\Omega  \theta_i \theta_j dx,  \quad 
% C_{\uu ij} = \int_\Omega  \boldsymbol w_i \boldsymbol w_j dx, \quad L_{ij} = \int_\Omega \frac{1}{\mu_f} {\boldsymbol K}\nabla \theta_j \cdot \nabla \theta_i dx, \\
% M_{ij} = -\int_{\CS}[\boldsymbol w_j]_{|\CS} \cdot {\boldsymbol n}^{+} \theta_{i,c} ds,  \quad 
% W_{ij} = \int_{\CS} \dfrac{w^3}{12 \mu_f} \bar \nabla \theta_{j,c} \cdot  \bar \nabla \theta_{i,c} ds. \\
%  E_{ij} = \int_{\Omega^{+} \cup \Omega^{-}} 2G {\boldsymbol \varepsilon}({\boldsymbol w}_j) : {\boldsymbol \varepsilon}({\boldsymbol w}_i) dx + \int_{\Omega^{+} \cup \Omega^{-}} \lambda ({\nabla \cdot} {\boldsymbol w}_j) ({\nabla \cdot} {\boldsymbol w}_i)dx. 
% $
% \end{array}
We denote $U= [\boldsymbol U_1, \boldsymbol U_2, \ldots, \boldsymbol U_m]^T$; then the elasticity equation results in the equation
\begin{align}
    C_\uu \dfrac{d^2 {\boldsymbol U}}{dt^2} + (E_1 + E_2)  {\boldsymbol U}  + A^{up} P = F.
\end{align}
We denote $P= [p_1, p_2, \ldots, p_m]^T$; then the mass balance equations in fracture and matrix result in the equation
\begin{align}
 (C_p + C_{pc}) \dfrac{d P}{dt} + A^{up} \dfrac{d U}{dt}  + (L+ W)P = Q.    
\end{align}
Define: $\boldsymbol \chi = \dfrac{d \boldsymbol U}{d t}$ to rewrite the above system
\begin{align*}
    C_\uu \dfrac{d {\boldsymbol \chi}}{dt} + (E_1 + E_2)  {\boldsymbol U}  + A^{up} P = F, \\
 (C_p + C_{pc}) \dfrac{d P}{dt} + A^{up} \dfrac{d U}{dt}  + (L+ W)P = Q,\\
  \dfrac{d \boldsymbol U}{d t} - \boldsymbol \chi = 0.
\end{align*}
This is in the form of a linear DAE: 
\begin{align}
    \label{eq:dae1} \mathcal{M} \dfrac{d \boldsymbol \Xi}{dt} + \mathcal{N} \boldsymbol \Xi = \mathcal{F}
\end{align}
with $\mathcal{M}$ and $\mathcal{N}$ appropriately defined from above. 
To obtain existence of a solution, we recall that the matrix pencil $s\mathcal{M} + \mathcal{N}$ should not degenerate for some $s \neq 0$. It is sufficient to demand that for $s = 1$,  $\mathcal{M}+\mathcal{N}$ is invertible or in other words, $(\mathcal{M} + \mathcal{N}) \boldsymbol \Xi = 0$  has only $\Xi = 0$ as the only solution. This is verified by showing the system below has zero as its only solution.
\begin{multline}
\noindent \text{For } \boldsymbol{w} \in \boldsymbol V_m\\
 ( {{\boldsymbol u}_m}, {\boldsymbol w}) + 2 G \big(\varepsilon({\boldsymbol u}_m), \varepsilon({\boldsymbol w})\big) + \lambda \big(\nabla \cdot {\boldsymbol u}_m,\nabla\cdot {\boldsymbol w}\big)  - \alpha \big( p_m ,\nabla\cdot {\boldsymbol w}\big) \\
 +\big(p_{cm} ,[{\boldsymbol w}]_{\CS} \cdot {\boldsymbol n}^+\big)_\CS = 0
\label{eq:varumixTD1}    
\end{multline} 
for $\theta \in Q_m$, 
\begin{align}
 \nonumber  \Big(\Big(\Big(\frac{1}{M} + c_f \varphi_0\Big) p_m +\alpha  \nabla\cdot {\boldsymbol u}_m \Big), \theta \Big)
+ \frac{1}{\mu_f}\big( \boldsymbol K \nabla p_m ,\nabla \theta_i \big) + (c_{fc}  p_{cm}, \theta)_{\CS} \\+   (-[{\boldsymbol u}_m]_\CS \cdot {\boldsymbol n}^+, \theta)_\CS + 
 \frac{1}{12 \mu_f} ( w^{3/2} (\overline{\nabla} p_{cm}), w^{3/2} \overline{\nabla} \theta )_\CS =  0.
\label{eq:varpmixTD3}
\end{align}
Testing $\boldsymbol w = \uu_m, \theta = p_m$ and adding the two equations, we see that the coupling terms cancel leaving positive terms on the left hand side. This proves the result.

\subsection{A priori estimates and convergence}

To establish convergence of the finite dimensional solution to that of the continuous one, we use compactness arguments after deriving a priori estimates. With the finite dimensional solution in hand, we can use them as the test functions. We begin with the variational form \eqref{eq:varumix}-\eqref{eq:varpmix2} and choose $\boldsymbol v = \partial_t \boldsymbol u_m, \theta =  p_m, \theta_c = p_{cm}$, to obtain
\begin{multline} \label{eq:apestimate1}
(\partial_{tt} {\boldsymbol u_m}, \partial_t{\boldsymbol u_m})
+ 2 G \big(\varepsilon({\boldsymbol u_m}), \varepsilon({\partial_t \boldsymbol u_m})\big) + \lambda \big(\nabla \cdot { \boldsymbol u_m},\nabla\cdot {\partial_t \boldsymbol u_m}\big) 
- \alpha \big( p_m ,\nabla\cdot {\partial_t \boldsymbol u_m}\big)
\\[0.25cm]
 + \big(p_{cm} ,[{\partial_t \boldsymbol u_m}]_{\CS} \cdot {\boldsymbol n}^+\big)_\CS
  + \Big({\partial_t}\Big(\Big(\frac{1}{M} + c_f \varphi_0\Big) p_m +\alpha  \nabla\cdot {\boldsymbol u_m} \Big) , p_m \Big) 
+ \frac{1}{\mu_f}\big( K \nabla p_m,\nabla p_m \big) 
\\
+ (c_{fc} \partial_t p_{cm}, p_{cm}) -  (\partial_t[{\boldsymbol u_m}]_\CS \cdot {\boldsymbol n}^+, p_{cm} )_\CS 
+ \frac{1}{12 \mu_f} \langle w^{\frac{3}{2}} (\overline{\nabla} p_{cm} - \rho_{f,r} g \eta), w^{\frac{3}{2}} \overline{\nabla} p_{cm} \rangle  \\=  \langle\tilde q_W , p_{cm} \rangle_\CS,
- \langle {q}_l, p_{cm}\rangle_\CS + \big(\tilde q, p_m \big) + \big( {\boldsymbol f} ,{\boldsymbol u_m}\big).
\end{multline}
Following estimates are straightforward and will be used for the a priori estimates
\begin{eqnarray*}
  (\partial_{tt} {\boldsymbol u_m}, \partial_t{\boldsymbol u_m}) &=& \dfrac{1}{2}\dfrac{d}{dt}\|\partial_t \uu_m\|^2,
\\ 
  2 G \big(\varepsilon({\boldsymbol u_m}), \varepsilon({\partial_t \boldsymbol u_m})\big) &=& 2G \dfrac{1}{2}\dfrac{d}{dt}\|\varepsilon(\uu_m) \|^2,
\\
  \lambda \big(\nabla \cdot {\boldsymbol u_m},\nabla\cdot {\partial_t \boldsymbol u_m}\big) &=& \lambda \dfrac{1}{2}\dfrac{d}{dt}\|\nabla \cdot \uu_m \|^2,
\\
  \Big(\frac{\partial }{\partial t}\Big(\Big(\frac{1}{M} + c_f \varphi_0\Big) p_m \Big) , p_m \Big) &=& \Big(\frac{1}{M} + c_f \varphi_0 \Big ) \dfrac{1}{2}\dfrac{d}{dt}\| p_m \|^2, 
\\
  \Big(\frac{\partial }{\partial t}\big(\alpha  \nabla \cdot {\uu_m} \big) , p_m \Big)  - \alpha \big( p_m ,\nabla\cdot {\partial_t \boldsymbol u_m}\big) &=& 0,
\\
  -\big(p_{cm} ,[{\partial_t \boldsymbol u_m}]_{\CS} \cdot {\boldsymbol n}^+\big)_\CS + \Big( \dfrac{d}{dt} [{\boldsymbol u_m}]_\CS \cdot {\boldsymbol n}^+, p_{cm} \Big)_\CS &=& 0,
\\
  \big (c_{fc} \partial_t p_{cm}, p_{cm} \big ) &=& c_{fc} \dfrac{1}{2}\dfrac{d}{dt}\| p_{cm} \|^2,
\\
  \frac{1}{\mu_f}\big( K \nabla p_m,\nabla p_m \big) &=& \frac{1}{\mu_f} \| K^{1/2} \nabla p_m \|^2,
\\
  \frac{1}{12 \mu_f} \langle w^{\frac{3}{2}} (\overline{\nabla} p_{cm} , w^{\frac{3}{2}} \overline{\nabla} p_{cm} \rangle &=& \frac{1}{12 \mu_f} \| w^{\frac{3}{2}} \overline{\nabla} p_{cm} \|^2.
\end{eqnarray*}
Using Young's inequality, with the coefficients $\delta_i, i = 1,2,\ldots$ to be chosen appropriately
\begin{eqnarray*}  
  \big \langle - \rho_{f,r} g \eta, w^{\frac{3}{2}} \overline{\nabla} p_{cm} \rangle &\leq& \delta_1 \|w^{\frac{3}{2}} \nabla p_{cm}\|^2 + \frac{1}{2\delta_1} \rho_{f,r} |g| \|\eta\|^2,
\\
  \langle\tilde q_W , p_{cm} \rangle_\CS &\leq& \frac{1}{2\delta_2}\|q_W\| + \delta_2 \|p_{cm}\|^2,
\\
  \big(\tilde q, p_m \big) &\leq& \frac{1}{2\delta_4}\|q\|^2 + \delta_4 \|p_m\|^2,
\\
  \big( {\boldsymbol f} ,{\boldsymbol u_m}\big) &\leq& \frac{1}{2\delta_5}\|\boldsymbol f\|^2 + \delta_5 \|\uu_m\|^2 .
\end{eqnarray*}
 With these estimates, we rewrite \eqref{eq:apestimate1} as
  \begin{multline*}
   \dfrac{d}{dt}\|\partial_t \uu_m\|^2 + 2G \dfrac{d}{dt}\|\varepsilon(\uu_m) \|^2 
+ \lambda \dfrac{d}{dt}\|\nabla \cdot \uu_m \|^2  
+ \big((\frac{1}{M} + c_f \varphi_0) \dfrac{d}{dt}\|\nabla p_m \|^2 \\
 + c_{fc} \dfrac{d}{dt}\| p_{cm} \|^2 + \frac{1}{\mu_f} \| K^{1/2} \nabla p_m \|^2 
+ \frac{1}{12 \mu_f} \| w^{\frac{3}{2}} \overline{\nabla} p_{cm} \|^2 
+  \delta_1 \|w^{\frac{3}{2}} \nabla p_{cm}\|^2 + \frac{1}{2\delta_1} \rho_{f,r} |g| \|\eta\|^2 \\
 \leq  C \Big( \frac{1}{2\delta_2}\|q_W\| 
 + \frac{1}{2\delta_3}\|q_l\|^2 
  + \frac{1}{2\delta_4}\|q\|^2 
   + \frac{1}{2\delta_5}\|\boldsymbol f\|^2 \Big)
\end{multline*}
Integrating in time over $(0,t)$, with appropriately chosen $\delta_i, i = 1,\ldots,5$, we get the following proposition.

\begin{proposition} \label{pro:estimates}
Let $\boldsymbol f \in H^2(0,T; L^2(\Omega)^d)$, $\tilde{q} \in L^2(\Omega \times (0,T))$, $\tilde{q}_W \in H^1(0,T;L^2(\CS))$, $p_0 \in Q$, and $w$ satisfy \eqref{eq:hyp_w}, then the solution $\uu_m, p_m$ satisfying \eqref{eq:varumix}-\eqref{eq:varpmix2} satisfies the following energy estimate, for $t \leq T$
\begin{multline}
  \|\partial_t \uu_m(t)\|^2 + G \|\varepsilon(\uu_m)(t) \|^2 
+ \lambda \|\nabla \cdot \uu_m(t) \|^2  
+ \Big(\frac{1}{M} + c_f \varphi_0\Big) \dfrac{1}{2}\| p_m(t) \|^2 \\
 + c_{fc} \| p_{cm}(t) \|^2 + \frac{1}{\mu_f} \int_0^t \| K^{1/2} \nabla p_m(\tau) \|^2 d\tau
+ \frac{1}{12 \mu_f} \int_0^t \| w^{\frac{3}{2}} \overline{\nabla} p_{cm}(\tau) \|^2 d\tau \\[0.15cm]
\leq C  \left ( \rho_{f,r} |g| \|\eta\|^2 + \|q_W\| +  \|q_l\|^2 + \|q\|^2 +\|\boldsymbol f\|^2 \right).
\end{multline}
Here, $C$ depends on $G,\lambda, \boldsymbol K, \mu_f, c_f, M$ and is independent of dimension $m$ and solutions $\uu_m, p_m$.
\end{proposition}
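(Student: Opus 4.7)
The strategy is to derive the energy estimate directly from the Galerkin system \eqref{eq:varumix10}--\eqref{eq:varpmix11} by inserting the natural test functions $\boldsymbol w = \partial_t \boldsymbol u_m$ in the mechanics equation and $\theta = p_m$, $\theta_{c} = p_{cm}$ in the two pressure equations. These choices are admissible because $\partial_t \boldsymbol u_m(t) \in V_m$ and $p_m(t) \in Q_m$ for each $t$, and $p_{cm}$ is by definition the trace of $p_m$ on $\CS$, so no density argument is needed at the Galerkin level. Summing the three resulting identities and then integrating the sum over $(0,t)$ will deliver the claimed bound.

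The crucial algebraic observation is the pairwise cancellation of the coupling terms. The poroelastic coupling $-\alpha(p_m,\nabla\cdot \partial_t \boldsymbol u_m)$ from the momentum equation cancels the $\alpha(\partial_t \nabla\cdot \boldsymbol u_m, p_m)$ contribution coming from the matrix storage term; likewise $(p_{cm},[\partial_t \boldsymbol u_m]_\CS \cdot \boldsymbol n^+)_\CS$ from the momentum equation cancels $-(\partial_t [\boldsymbol u_m]_\CS \cdot \boldsymbol n^+, p_{cm})_\CS$ from the fracture mass balance. What remains on the left-hand side consists of perfect time derivatives of the kinetic energy $\tfrac12\|\partial_t \boldsymbol u_m\|^2$, the elastic energies $G\|\varepsilon(\boldsymbol u_m)\|^2$ and $\tfrac{\lambda}{2}\|\nabla\cdot \boldsymbol u_m\|^2$, the bulk storage energy $\tfrac12(\tfrac1M+c_f\varphi_0)\|p_m\|^2$, and the fracture storage energy $\tfrac{c_{fc}}{2}\|p_{cm}\|^2_{L^2(\CS)}$, together with the non-negative dissipations $\tfrac1{\mu_f}\|\boldsymbol K^{1/2}\nabla p_m\|^2$ and $\tfrac1{12\mu_f}\|w^{3/2}\overline{\nabla} p_{cm}\|^2_{L^2(\CS)}$.

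For the right-hand side, I would apply Young's inequality with parameters $\delta_i > 0$ chosen so that the gravitational contributions $\rho_{f,r}g\nabla\eta$ in the bulk and $\rho_{f,r}g\overline{\nabla}\eta$ on the fracture are absorbed into the corresponding dissipation terms, while the source pairings $(\boldsymbol f,\partial_t\boldsymbol u_m)$, $(\tilde q, p_m)$, and $(\tilde q_W, p_{cm})_\CS$ are bounded by data norms plus small multiples of energy quantities already present on the left. After integration over $(0,t)$ and control of the initial contributions by the hypotheses on $\boldsymbol u_0$, $\boldsymbol u_1$ and $p_0$, any residual remainder of the form $\int_0^t E_m(s)\,ds$ produced along the way is handled by Grönwall's inequality, yielding the stated estimate with a constant $C$ independent of $m$.

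The main technical delicacy lies on the fracture: the trace $p_{cm}$ must be dominated by the $Q$-norm of $p_m$, and the weighted dissipation $\tfrac1{12\mu_f}\|w^{3/2}\overline{\nabla} p_{cm}\|^2_{L^2(\CS)}$ must be strong enough to absorb the fracture gravity term $(w^{3/2}\rho_{f,r}g\overline{\nabla}\eta,\, w^{3/2}\overline{\nabla} p_{cm})_\CS$. This is ensured by the weighted Green formula of Theorem \ref{th:weightedspace} together with the asymptotic behaviour \eqref{eq:hyp_w} of $w$ near $\partial\CS$, which guarantees that $w^{3/2}\overline{\nabla}\eta \in L^2(\CS)$. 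Because the test functions are chosen from within the Galerkin subspaces, rather than being approximated, all coercivity and trace constants entering the final estimate depend only on $\Omega$, $\CS$ and the physical parameters, and are uniform in $m$.
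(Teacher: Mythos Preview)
Your proposal is correct and follows essentially the same route as the paper: test the Galerkin system with $\partial_t\boldsymbol u_m$, $p_m$, $p_{cm}$, exploit the pairwise cancellation of the coupling terms, rewrite the remaining left-hand side as time derivatives of energies plus nonnegative dissipations, bound the right-hand side via Young's inequality, and integrate in time. Your explicit mention of Grönwall to absorb the residual $\int_0^t(\delta_2\|p_{cm}\|^2+\delta_4\|p_m\|^2+\delta_5\|\partial_t\boldsymbol u_m\|^2)\,d\tau$ terms is in fact a detail the paper glosses over but which is needed to close the estimate.
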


\begin{remark}
We point out the cancellation of coupling terms in the a priori estimate above. We observe that the fluid pressure term $-\alpha (p ,\nabla\cdot {\boldsymbol v})$ in the mechanics equation gets cancelled with the $(\partial_t (\alpha  \nabla\cdot {\boldsymbol u}), \theta)$ term in the flow equation for the choice of test functions $\boldsymbol v = \partial_t \uu$, $\theta = p$. Similarly, the fracture pressure term $-\big(p_c ,[{\boldsymbol v}]_{\CS} \cdot {\boldsymbol n}^+\big)_\CS$ in the mechanics equation gets cancelled with the $ (\partial_t[{\boldsymbol u}]_\CS \cdot {\boldsymbol n}^+, \theta_c )_\CS$ term for the test functions $\boldsymbol v = \partial_t \uu, \theta_c = p_c$. 
\end{remark}

The above estimate implies the following convergence result, if needed, up to a subsequence,
\begin{align*}
 p_m &\rightarrow p\ \text{ weakly in } L^\infty(0,t; L^2(\Omega)) \cap L^2(0,t; Q),\\  
 %p_{cm} &\rightarrow p_c \text{ weakly in } L^\infty(0,t; H^1(\CS))\\
 \uu_m & \rightarrow \uu \text{ weakly in } L^\infty(0,t; \V) \cap H^1(0,t; L^{2}(\Omega)).
\end{align*}
The existence of the solution follows by simply taking the limits in
\begin{equation} \text{for all } {\boldsymbol v} \in \V\,,\,(\partial_{tt} {\boldsymbol u_m}, {\boldsymbol v}) + 2 G \big(\varepsilon({\boldsymbol u_m}), \varepsilon({\boldsymbol v})\big) + \lambda \big(\nabla \cdot {\boldsymbol u_m},\nabla\cdot {\boldsymbol v}\big) 
- \alpha \big( p_m ,\nabla\cdot {\boldsymbol v}\big)
   +\big(p_{cm} ,[{\boldsymbol v}]_{\CS} \cdot {\boldsymbol n}^+\big)_\CS = \big( {\boldsymbol f} ,{\boldsymbol v}\big)\,,\,\end{equation}
\begin{align}
   \text{for all } \theta \in Q\,,\,\Big({\partial_t}\Big(\Big(\frac{1}{M} + c_f \varphi_0\Big) p_m +\alpha  \nabla\cdot {\boldsymbol u_m} \Big) ,\theta \Big)+ \frac{1}{\mu_f}\Big( K \nabla p_m,\nabla \theta \Big) = (\tilde q, \theta),
\\
 (c_{fc} \partial_t p_{cm}, \theta_c) -   (\partial_t[{\boldsymbol u_m}]_\CS \cdot {\boldsymbol n}^+, \theta_c )_\CS + \frac{1}{12 \mu_f} \langle w^{\frac{3}{2}} (\overline{\nabla} p_{cm} - \rho_{f,r} g \eta), w^{\frac{3}{2}} \overline{\nabla} \theta_c \rangle =  \langle\tilde q_W , \theta_c \rangle_\CS,
- \langle {q}_l, \theta_c \rangle_\CS,
\end{align}
with $\theta_c$ being the trace of $\theta$. 
We then obtain the variational form:
\begin{equation} \text{for all } {\boldsymbol v} \in \V\,,\,(\partial_{tt} {\boldsymbol u}, {\boldsymbol v}) + 2 G \big(\varepsilon({\boldsymbol u}), \varepsilon({\boldsymbol v})\big) + \lambda \big(\nabla \cdot {\boldsymbol u},\nabla\cdot {\boldsymbol v}\big) 
- \alpha \big( p ,\nabla\cdot {\boldsymbol v}\big)
   +\big(p_c ,[{\boldsymbol v}]_{\CS} \cdot {\boldsymbol n}^+\big)_\CS = \big( {\boldsymbol f} ,{\boldsymbol v}\big)\,,\, 
\end{equation}
\begin{align}
   \text{for all } \theta \in Q\,,\,\Big(\frac{\partial }{\partial t}\Big(\Big(\frac{1}{M} + c_f \varphi_0\Big) p +\alpha  \nabla\cdot {\boldsymbol u} \Big) ,\theta \Big)
+ \frac{1}{\mu_f}\big( K \nabla p,\nabla \theta \big) = \big(\tilde q, \theta \big),\\
\,,\, (c_{fc} \partial_t p_c, \theta_c) -   (\partial_t[{\boldsymbol u}]_\CS \cdot {\boldsymbol n}^+, \theta_c )_\CS + \frac{1}{12 \mu_f} \langle w^{\frac{3}{2}} (\overline{\nabla} p_c - \rho_{f,r} g \eta), w^{\frac{3}{2}} \overline{\nabla} \theta_c \rangle =  \langle\tilde q_W , \theta_c \rangle_\CS,
- \langle {q}_l, \theta_c\rangle_\CS,
\end{align}
with $\theta_c$ being the trace of $\theta$.
We summarize the existence result in the following  
\begin{theorem}
There exists a limit triple $(p,p_c, \uu)$ that satisfies the variational equation in formulation 1. 
\end{theorem}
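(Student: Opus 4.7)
The plan is to carry out the standard compactness-plus-limit-passage argument on the Galerkin approximations, leveraging the uniform bounds already established. From Proposition \ref{pro:estimates}, the families $\{\uu_m\}$, $\{\partial_t\uu_m\}$, $\{p_m\}$ and $\{p_{cm}\}$ are uniformly bounded in $L^\infty(0,T;\V)$, $L^\infty(0,T;L^2(\Omega))$, $L^\infty(0,T;L^2(\Omega))\cap L^2(0,T;Q)$ and $L^\infty(0,T;L^2(\CS))$, respectively. Since each of these is the (pre)dual of a separable reflexive Hilbert space, Banach--Alaoglu supplies a subsequence (not relabelled) converging in the weak-$*$ and weak senses already listed just before the theorem, with limits $\uu$, $p$ and $p_c$; the trace identification $p_c = p_{|\CS}$ passes to the limit by weak continuity of the trace operator $Q\to H^{1/2}(\CS)$. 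A separate bound for $\partial_{tt}\uu_m$ in $L^2(0,T;\V')$ is obtained directly from \eqref{eq:varumix10} by isolating the inertial term and estimating all remaining contributions via Korn's inequality, the energy bounds above, and the trace inequality \eqref{eq:trace2}.

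For the limit passage I would fix $m_0$ and take test functions of the form $\boldsymbol v(t)=\varphi(t)\boldsymbol w_i$ with $\boldsymbol w_i\in \V_{m_0}$, and $\theta(t)=\psi(t)\theta_i$ with $\theta_i\in Q_{m_0}$, for scalar $\varphi,\psi\in C_c^\infty(0,T)$. For every $m\ge m_0$ these are admissible in the Galerkin system, and since each term of \eqref{eq:varumix10}--\eqref{eq:varpmix11} is linear in the Galerkin unknowns, the associated bilinear forms pair a weakly convergent sequence against a fixed test function—so termwise passage to the limit is immediate. Where a time derivative hits the unknown, integration by parts in time shifts it onto the scalar test function $\varphi$ or $\psi$, avoiding the need for strong time-compactness at this stage. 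The density of $\bigcup_m \V_m$ in $\V$ and of $\bigcup_m Q_m$ in $Q$ then lifts the limiting identities to arbitrary admissible test functions.

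Because the Galerkin energy identity effectively realises the reduced Formulation 2 (the leakage cancels when the bulk and fracture mass balances are added), the limiting triple first satisfies Formulation 2. To upgrade this to Formulation 1, I would reuse the construction from the equivalence lemma: integrating the reduced pressure equation over $(0,t)$ produces a linear functional on $Q$ that annihilates $Q_0=\{q\in Q:q_c=0\}$, and Proposition \ref{prop:coercivity} together with the extension operator $E$ from Theorem \ref{th:weightedspace} then identifies a unique $\widetilde Q_L\in L^\infty(0,T;H^1_w(\CS)')$ whose time derivative $\tilde q_L$ recovers the split bulk/fracture equations of Formulation 1. The initial conditions, in particular \eqref{eqn:initial} and $\uu(0)=\uu_0$, $\partial_t\uu(0)=\uu_1$, are recovered by testing against $\varphi\in C^\infty([0,T])$ with $\varphi(T)=0$ and $\varphi(0)\neq 0$, integrating by parts in time, and using the prescribed convergence of the Galerkin initial data.

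The chief obstacle is the pair of coupling terms on $\CS$ that mix a time derivative with a displacement trace, namely $\big(p_{cm},[\partial_t\uu_m]_\CS\cdot\ven^+\big)_\CS$ in the mechanics equation and $\tfrac{d}{dt}\big(-[\uu_m]_\CS\cdot\ven^+,\theta_c\big)_\CS$ in the fracture equation. These are handled by the time-by-parts trick above: the derivative is shifted onto the smooth test function, after which weak convergence of $[\uu_m]_\CS$ in $L^2(0,T;L^2(\CS))$—guaranteed by continuity of the jump operator from $\V$ into $L^2(\CS)$—is sufficient to close the passage. A second delicate point is the weighted gradient term $w^{3/2}\overline\nabla p_{cm}$, whose weak $L^2(\CS\times(0,T))$-limit must be identified as $w^{3/2}\overline\nabla p_c$; this is legitimate because $w$ is taken as given data (Remark \ref{rem:model2}) and weak convergence in $L^2(0,T;Q)$ combined with the definition of the $Q$-norm ensures the identification without any nonlinearity entering.
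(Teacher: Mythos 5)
Your proposal is correct and follows essentially the same route as the paper: Galerkin approximation built on the reduced Formulation 2, the energy bounds of Proposition \ref{pro:estimates}, weak/weak-$*$ compactness, termwise passage to the limit using linearity with fixed test functions, and recovery of the leakage term (hence Formulation 1) via Proposition \ref{prop:coercivity} and the extension operator, exactly as in the equivalence lemma the paper proves for this purpose. You merely make explicit several steps the paper leaves implicit (integration by parts in time for the interface coupling terms, density of the Galerkin subspaces, recovery of the initial data, and the $L^2(0,T;\V')$ bound on $\partial_{tt}\uu_m$).
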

\subsection{Uniqueness of solution for Problem ($Q_0$)}
After establishing the existence result, we proceed to the uniqueness statement. The uniqueness is a straightforward consequence of the linearity of the model equation and the estimates obtained in Proposition \ref{pro:estimates}. 
\begin{theorem}
There exists one and only one solution satisfying the variational equation in formulation 1. 
\end{theorem}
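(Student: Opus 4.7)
The plan is to dispatch existence by invoking the preceding theorem, which already exhibits a limit triple $(p, p_c, \uu)$ produced by the Galerkin construction and shown to satisfy formulation 1. The remaining work is the uniqueness claim, which I would obtain as a direct consequence of the linearity of the system together with the energy identity that underlies Proposition~\ref{pro:estimates}.

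Concretely, suppose $(p^{(1)}, p_c^{(1)}, \uu^{(1)})$ and $(p^{(2)}, p_c^{(2)}, \uu^{(2)})$ are two weak solutions in the regularity classes of formulation~1 sharing the same data $\boldsymbol f$, $\tilde q$, $\tilde q_W$, $\tilde q_L$, and the same initial data $\uu_0, \uu_1, p_0$. Set $\uu := \uu^{(1)} - \uu^{(2)}$, $p := p^{(1)} - p^{(2)}$, $p_c := p_c^{(1)} - p_c^{(2)}$, where $p_c$ is still the trace of $p$ on $\CS$. By linearity of \eqref{eq:varumix}--\eqref{eq:varpmix2}, the difference satisfies the homogeneous variational system with zero right-hand sides, and from \eqref{eqn:initial} together with $\uu(0)=0$, $\partial_t\uu(0)=0$, $p(0)=0$ in the appropriate trace sense.

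I would then test \eqref{eq:varumix} with $\boldsymbol v = \partial_t\uu$, \eqref{eq:varpmix1} with $\theta = p$, and \eqref{eq:varpmix2} with $\theta_c = p_c$, and sum. The same cancellations highlighted in the remark after Proposition~\ref{pro:estimates} operate here: the term $-\alpha(p, \nabla\cdot\partial_t\uu)$ annihilates the $\alpha(\partial_t\nabla\cdot\uu, p)$ contribution, and $(p_c, [\partial_t\uu]_\CS\cdot\boldsymbol n^+)_\CS$ annihilates $-(\partial_t[\uu]_\CS\cdot\boldsymbol n^+, p_c)_\CS$. After integrating in time on $(0,t)$ and using the vanishing initial data, the resulting identity reads
\begin{equation*}
\tfrac12\|\partial_t\uu(t)\|^2 + G\|\varepsilon(\uu(t))\|^2 + \tfrac{\lambda}{2}\|\nabla\cdot\uu(t)\|^2 + \tfrac12\!\left(\tfrac{1}{M}+c_f\varphi_0\right)\!\|p(t)\|^2 + \tfrac{c_{fc}}{2}\|p_c(t)\|_\CS^2 + \int_0^t\!\! \left(\tfrac{1}{\mu_f}\|\boldsymbol K^{1/2}\nabla p\|^2 + \tfrac{1}{12\mu_f}\|w^{3/2}\overline{\nabla}p_c\|^2\right) d\tau = 0.
\end{equation*}
Each term on the left is nonnegative, so all of them vanish. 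Korn's inequality \eqref{eq:Korn1} upgrades $\|\varepsilon(\uu(t))\|=0$ to $\uu(t)\equiv 0$ in $\V$, and the remaining identities give $\partial_t\uu\equiv 0$, $p\equiv 0$, and $p_c\equiv 0$, proving uniqueness.

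The principal technical obstacle is the admissibility of $\boldsymbol v = \partial_t\uu$ as a test function in \eqref{eq:varumix}: only $\partial_{tt}\uu \in L^2(0,T;\V')$ and $\partial_t\uu \in L^\infty(0,T;L^2(\Omega))\cap L^2(0,T;\V)$ are available, so the pairing $\langle \partial_{tt}\uu, \partial_t\uu\rangle = \tfrac12\tfrac{d}{dt}\|\partial_t\uu\|_{L^2}^2$ is formal and must be justified by a density argument. I would handle this by performing the energy computation first on the Galerkin level for the differences of two potential limits obtained from a common subsequence, or equivalently by using a Steklov-type time-averaging of the difference equations as in Lions--Magenes~\cite{ref:LM72}, passing to the limit in the regularization parameter; the trace terms on $\CS$ remain well-defined throughout because $\uu \in L^2(0,T;\V)$ controls $[\partial_t\uu]_\CS$ in $L^2(0,T;H^{1/2}(\CS))$ and $p_c$ lies in $L^2(0,T;H^{1/2}(\CS))$ by the definition of $Q$.
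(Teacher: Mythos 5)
Your proposal is correct and follows essentially the same route as the paper: existence is delegated to the preceding Galerkin limit theorem, and uniqueness is obtained by forming the difference of two solutions, exploiting linearity, applying the energy estimate of Proposition~\ref{pro:estimates} (i.e.\ testing with $\partial_t\uu$, $p$, $p_c$ and using the same coupling cancellations), and concluding with Korn's and Poincar\'e's inequalities plus the trace relation $p_c = p|_\CS$. Your extra remark on justifying the test function $\boldsymbol v = \partial_t\uu$ via a Galerkin-level or regularization argument is a welcome refinement of a step the paper passes over silently, but it does not change the argument.
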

\begin{proof}
The proof proceeds via contradiction. If possible, let there be two solutions $(p_1,p_{c1}, \uu_1)$ and $(p_2,p_{c2}, \uu_2)$ satisfying the variational equation in formulation 1. We define, 
\begin{align*}
p = p_1 - p_2, \quad p_c = p_{c1} - p_{c2}, \quad \uu = \uu_1 - \uu_2.
\end{align*}
The variational equation satisfied by $(p, p_c, \uu)$ is given by: find  $\boldsymbol u \in L^\infty(0,T;\V)$, $\partial_t{\boldsymbol u} \in L^\infty(0,T;L^2(\Omega)) $, $\partial_{tt} \boldsymbol u \in L^2(0,T;\V^{'})$,  $p \in L^\infty(0,T;H^1(\Omega))$, $p_c \in L^2(0,T;H^{\frac{1}{2}}(\CS))$, such that 
\begin{equation} 
\text{for all } {\boldsymbol v} \in \V\,,\,(\partial_{tt} {\boldsymbol u}, {\boldsymbol v}) + 2 G \big(\varepsilon({\boldsymbol u}), \varepsilon({\boldsymbol v})\big) + \lambda \big(\nabla \cdot {\boldsymbol u},\nabla\cdot {\boldsymbol v}\big) 
- \alpha \big( p ,\nabla\cdot {\boldsymbol v}\big)
   +\big(p_c ,[{\boldsymbol v}]_{\CS} \cdot {\boldsymbol n}^+\big)_\CS = 0 
\label{eq:varumix3}
\end{equation}
and
\begin{align}
\label{eq:varpmix12}   \text{for all } \theta \in H^1(\Omega)\,,\,\Big({\partial_t}\Big(\Big(\frac{1}{M} + c_f \varphi_0\Big) p +\alpha  \nabla\cdot {\boldsymbol u} \Big), \theta\Big)
+ \frac{1}{\mu_f}\big( K \nabla p,\nabla \theta \big) =  \langle \tilde{q}_L,\theta \rangle_{\CS},\\
(c_{fc} \partial_t p_c, \theta_c)_{\CS} +   (\partial_t[{\boldsymbol u}]_\CS \cdot {\boldsymbol n}^+, \theta_c )_\CS + \frac{1}{12 \mu_f} ( w^{\frac 3 2} (\overline{\nabla} p_c - \rho_{f,r} g \nabla \eta), w^{\frac 3 2} \overline{\nabla} \theta_c )_\CS =  
- \langle \tilde{q}_L, \theta_c \rangle_\CS ,
\label{eq:varpmix41}
\end{align}
 subject to the initial conditions, 
\begin{align}
\label{eqn:initial21}
\Big(\Big(\frac{1}{M} + c_f \varphi_0\Big)p + \alpha \nabla \cdot {\boldsymbol u}\Big)\Big|_{t=0} = 0
\end{align}
and $\uu(x,0) = \uu_0, \partial_t \uu(x,0) = \uu_1$.
Moreover, the boundary condition is also homogeneous, Dirichlet for $p, \uu$. In other words, there is no forcing term in the mechanics and flow equations, and the initial and boundary conditions all vanish. 
The estimate as in Proposition \ref{pro:estimates} takes the form
 \begin{multline} \label{eq:aprioriestimates2}
\dfrac{1}{2}\|\partial_t \uu(t)\|^2 + G\|\varepsilon(\uu)(t) \|^2 
+ \lambda \dfrac{1}{2}\|\nabla \cdot \uu(t) \|^2  
+ \left(\frac{1}{M} + c_f \varphi_0)\right) \dfrac{1}{2}\|p(t) \|^2 \\
 + c_{fc} \dfrac{1}{2}\| p_{c}(t) \|^2 + \frac{1}{\mu_f} \int_0^t \| K^{1/2} \nabla p(\tau) \|^2 d\tau
+ \frac{1}{12 \mu_f} \int_0^t \| w^{\frac{3}{2}} \overline{\nabla} p_{c}(\tau) \|^2 d\tau  \leq 0.
   \end{multline}
This implies, $\partial_t \uu \equiv 0, \nabla \uu \equiv 0, \nabla p \equiv 0$. For $\uu$ we have used Korn's inequality. Using the homogeneous Dirichlet boundary condition and the Poincar\'{e} estimate, this implies $\uu \equiv 0, p \equiv 0$; noting that $p_c$ is the trace of $p$, uniqueness of solution triple follows. 
\end{proof}
\section{Including normal compliance and friction: Problem (Q)} \label{sec:contact2}
As mentioned before, the fracture flow model contains the width of the fracture both in the time derivative term and in the permeability description. However, the model equations have no means of ensuring that the width remains nonnegative. This means that the fracture surfaces may keep penetrating into each other since there is no mechanism to stop this penetration. This is clearly unphysical. Motivated by the arguments from tribology, we will append the existing model by ensuring normal compliance using simple phenomenological law for the fracture surface that employs the normal stiffness of the interface. 
% Recall on the fracture surface $\CS$, instead of \eqref{eq:tildsig2} we impose,
% \begin{align}
%  \label{eq:tildsig3} \left(\boldsymbol \sigma^{\rm{por}} \boldsymbol n^* \right) \cdot \boldsymbol n^{*} = -p_c - c_n[([\boldsymbol u]_\CS - g_0]_{+}^{m_n}.   
% \end{align}
% Here, $g_0$ is the width of the fracture in the undeformed configuration and $[\cdot]_{+}$ is the positive cut of the function; $c_n$ is a non-negative constant, $1\leq m_n \leq 3$ for $d=3$ and $m_n < \infty$ for $d = 2$. 
% The above modification in the interface condition imposes a resistance force when the fracture surfaces mutually penetrate. The $c_n, m_n$ are constants that characterize the material interface parameters. The restriction on $m_n$ is motivated by the fact that for $m_n+1$, $H^{1/2}(\CS)$ is continuously embedded in $L^{m_n+1}(\CS)$. This will be useful in obtaining the compactness results later. The above model is adapted from the classical work of Martins \& Oden \cite{martins1987existence} where a similar model is developed in the context of contact problems in mechanics. \\
Recall the nonlinear map, $P_n: V \rightarrow V^\prime$, such that for $\boldsymbol u, \boldsymbol v \in V$,
\begin{align}
    \langle P_n(\boldsymbol u), \boldsymbol v \rangle = \int_{\CS}  c_n \Big (-[\boldsymbol u]_\CS \cdot \boldsymbol n^{+} - g_0 \Big)_{+}^{m_n} ([-{\boldsymbol v}]_\CS \cdot {\boldsymbol n}^+) ds. 
\end{align}
Next, we consider the case when friction is included in the contact mechanics.

% The following conditions are used to ensure this:
% \[  \left(\boldsymbol \sigma^{\rm{por}} \boldsymbol n^* \right) \cdot \boldsymbol n^{*} = -p_c - c_n[([\boldsymbol u]_\CS - g_0]_{+}^{m_n};  \]
%   $  \text{ if } [\uu]_\CS  \leq g_0  \text{ then }  $
%   \[    |\sigma_T(\uu)| = 0
%  \]
%   $  \text{ if } [\uu]_\CS >g_0  \text{ then }  $
%   \[    |\sigma_T(\uu)| = c_T[([\uu]_\CS - g_0)_{+}]^{m_T}  \text{ and there exists } \lambda \geq 0, \dot{\uu}_T  = -\lambda \sigma_T(\uu). 
%  \]
% The above constraint reflects slip and is a generalization of Coulomb's law. It does not model the onset of slip inclusion of which leads to a variational inequality and can be easily handled. 

We recall 
\[j(\uu, \vv) = \int_{\CS} c_T \Big(-[\uu]_{\CS} \cdot \boldsymbol n^{+} - g_0 \Big)_{+}^{m_T}|\vv_T|ds, \quad \uu, \vv \in \V . \]
We also let $\partial_2 j(\uu, \partial_t \uu) \in \V \times \V$  denote the partial subdifferential of $j$ with respect to the second argument.

\medskip\medskip

\noindent
Here, note that in addition to the linear elasticity part, we include viscoelastic contribution as well with a viscous damping term characterized by $\gamma>0$. This is needed to control the time derivative of the  displacement $\uu$ terms on the boundary $\CS$ using the gradient of time derivative in the bulk domain given by the viscoelastic term.\\  

\subsection{Weak formulation of Problem (Q): Variational inequality and flow equations}
 
The conformal variational formulation of Problem (Q) reads: For given ${\boldsymbol f} \in L^2((\Omega \setminus \CS) \times ]0,T[)^d$, $\tilde q \in L^2(\Omega \times ]0,T[)$, and  $\tilde q_L \in L^2(0,T;H^{1}(\CS)')$, find  $\boldsymbol u \in L^\infty(0,T;\V)$, $\partial_t{\boldsymbol u} \in L^\infty(0,T;L^2(\Omega)) $, $\partial_{tt} \boldsymbol u \in L^2(0,T;\V^{'})$,  $p \in L^\infty(0,T;H^1(\Omega))$, $p_c \in L^2(0,T;H^{1/2}(\CS))$, such that 
 for all $ {\boldsymbol v} \in \V$,
\begin{multline}
 (\partial_{tt} {\boldsymbol u}, {\boldsymbol v} - \partial_t{\uu}) + 2 G \big(\varepsilon({\boldsymbol u}), \varepsilon({\boldsymbol v} - \partial_t{\uu})\big) + \lambda \big(\nabla \cdot {\boldsymbol u},\nabla\cdot ({\boldsymbol v} - \partial_t{\uu}) \big) + \gamma (\varepsilon (\partial_t{\uu}), \varepsilon(\vv - \partial_t{\uu}) )\\ + \gamma (\nabla \cdot \partial_t{\uu}, \nabla (\vv - \partial_t{\uu}))
- \alpha \big( p ,\nabla\cdot ({\boldsymbol v} - \partial_t{\uu}) \big) 
 +\big(p_c ,[{\boldsymbol v} - \partial_t \uu]_{\CS} \cdot {\boldsymbol n}^+\big)_\CS + \langle P_n([\uu]_\CS), \vv - \partial_t{\uu} \rangle \\
 +  \langle j(\uu(t), {\vv}) - j(\uu, \partial_t{\uu} - \vv) \rangle \geq \big( {\boldsymbol f} ,{\boldsymbol v} - \partial_t{\uu} \big) ,
\label{eq:varumixf1}    
\end{multline} 
for all $\theta \in H^1(\Omega)\,,$
\begin{align}
\label{eq:varpmixf2}    \Big({\partial_t}\Big(\Big(\frac{1}{M} + c_f \varphi_0\Big) p +\alpha  \nabla\cdot {\boldsymbol u} \Big), \theta\Big)
+ \frac{1}{\mu_f}\big( \boldsymbol K \nabla p,\nabla \theta \big) &= \big(\tilde q, \theta \big) + \langle \tilde{q}_L,\theta \rangle_{\CS} ,
\end{align}
and for all $\theta_c \in H_w^1(\CS)$,
\begin{multline}
(c_{fc} \partial_t p_c, \theta_c)_{\CS} -  (\partial_t[{\boldsymbol u}]_\CS \cdot {\boldsymbol n}^+, \theta_c )_\CS
\\
%\big(p_c ,[{\boldsymbol v}]_{\CS} \cdot {\boldsymbol n}^+\big)_\CS,  \dfrac{d}{dt}  ([{\boldsymbol u}]_\CS \cdot {\boldsymbol n}^+, \theta_c )_\CS, \Big (\partial_t (\alpha  \nabla\cdot {\boldsymbol u} \Big), \theta\Big), \alpha \Big( p ,\nabla\cdot ({\boldsymbol v} - \dot{\uu}(t)) \Big) 
+ \frac{1}{12 \mu_f} ( w^{3/2} (\overline{\nabla} p_c - \rho_{f,r} g \nabla \eta), w^{3/2} \overline{\nabla} \theta_c )_\CS =  ( \tilde q_W , \theta_c )_\CS,
- \langle \tilde{q}_L, \theta_c \rangle_\CS ,
\label{eq:varpmixf3}
\end{multline}
 subject to the initial condition,
\begin{align}
\label{eqn:initialf1}
\Big(\Big(\frac{1}{M} + c_f \varphi_0\Big)p + \alpha \nabla \cdot {\boldsymbol u}\Big)\Big|_{t=0} = \Big(\frac{1}{M} + c_f \varphi_0\Big)p_0 + \alpha \nabla \cdot {\boldsymbol u}_0
\end{align}
in addition to the initial conditions for displacement $\uu(x, 0) = \uu_0, \partial_t \uu(x,0) = \uu_1.$
This weak formulation is obtained starting from problem (Q) using the standard method of taking smooth test functions and then performing partial integration and using the interface  boundary conditions. The difference with the problem $(Q_0)$ is in the first equation. Here, in contrast the contact mechanics description leads to a variational inequality. The inequality is obtained due to the friction condition on $\CS$ implying for $\vv \in \boldsymbol V$
\[ \boldsymbol \sigma_T \cdot (\vv_T - \partial_t \uu_T) + c_T[(-[\uu]_\CS \cdot \boldsymbol n^{+}-g_0)_{+}]^{m_T}(|\vv_T| + |\partial_t\uu|) \geq 0. \]

%\subsection{Existence of a solution {\color{red} [can't we remove this subsection or merge it with Section 6.4?]}} \label{sec:wellposedness2}

\subsection{Uniqueness of solution}

We let $(\uu_1,p_1,p_{c1})$ and $(\uu_2,p_2,p_{c2})$ be two solution triples solving \eqref{eq:varumixf1}-\eqref{eqn:initialf1}. We take \eqref{eq:varumixf1}, consider the equation for $\uu_1$ and choose $\vv = \partial_t \uu_2$. Similarly, considering the equation for $\uu_2$ we choose $\vv = \partial_t \uu_1$. That is, at a.e. $t$, we have 
\begin{multline}
 (\partial_{tt} {\boldsymbol u}_1 , \partial_t{\uu_2} - \partial_t{\uu}_1) + 2 G \big(\varepsilon({\uu}_1), \varepsilon(\partial_t{\uu_2} - \partial_t{\uu}_1)\big) + \lambda \big(\nabla \cdot {\boldsymbol u}_1,\nabla\cdot (\partial_t{\uu_2} - \partial_t{\uu}_1) \big) + \gamma \big(\nabla \cdot \partial_t{\boldsymbol u}_1,\nabla\cdot (\partial_t{\uu_2} - \partial_t{\uu}_1) \big)\\ + \gamma (\varepsilon (\partial_t{\uu}_1), \varepsilon(\partial_t{\uu_2} - \partial_t{\uu}_1)) 
- \alpha \big( p_1 ,\nabla\cdot (\partial_t{\uu_2} - \partial_t{\uu}_1) \big) 
 +\big(p_{c1} ,[\partial_t{\uu_2} - \partial_t \uu_1]_{\CS} \cdot {\boldsymbol n}^+\big)_\CS \\
 + \langle P_n([\uu_1]_\CS), \partial_t{\uu_2} - \partial_t{\uu}_1 \rangle 
 +  \langle j(\uu_1, \partial_t{\uu_2}) - j(\uu_1, \partial_t{\uu_1} - \partial_t{\uu_2}) \rangle \geq \big( {\boldsymbol f} ,\partial_t{\uu_2} - \partial_t{\uu_1} \big) 
\label{eq:varumixfu1}    
\end{multline} 
and
\begin{multline}
 (\partial_{tt} {\boldsymbol u}_2 , \partial_t{\uu_1} - \partial_t{\uu}_2) + 2 G \big(\varepsilon({\uu}_2), \varepsilon(\partial_t{\uu_1} - \partial_t{\uu}_2)\big) + \lambda \big(\nabla \cdot {\uu}_2,\nabla\cdot (\partial_t{\uu_1} - \partial_t{\uu}_2) \big) + \gamma \big(\nabla \cdot \partial_t{\uu}_2,\nabla\cdot (\partial_t{\uu_1} - \partial_t{\uu}_2) \big)\\ + \gamma (\partial_t{\uu}_2, \partial_t{\uu_1} - \partial_t{\uu}_2) 
- \alpha \big( p_2 ,\nabla\cdot (\partial_t{\uu_1} - \partial_t{\uu}_2) \big) 
 +\big(p_{c2} ,[\partial_t{\uu_1} - \partial_t \uu_2]_{\CS} \cdot {\boldsymbol n}^+\big)_\CS \\
 + \langle P_n([\uu_2]_\CS), \partial_t{\uu_1} - \partial_t{\uu}_2(t) \rangle 
 +  \langle j(\uu_2, \partial_t{\uu_1}) - j(\uu_2, \partial_t{\uu_2} - \partial_t{\uu_1}) \rangle \geq \big( {\boldsymbol f} ,\partial_t{\uu_1} - \partial_t{\uu_2} \big) .
\label{eq:varumixfu4}    
\end{multline} 
Subtracting the equations for $p_1$ and $p_2$ in \eqref{eq:varpmixf2} and using $\theta  = p_1-p_2$, we obtain
\begin{align}
\label{eq:varpmixfu2}    \Big({\partial_t}\Big(\Big(\frac{1}{M} + c_f \varphi_0\Big) (p_1 - p_2) +\alpha  \nabla\cdot \Big ( {\uu}_1 - {\uu}_2  \Big),  p_1 - p_2 \Big)
\\ \nonumber \noindent + \frac{1}{\mu_f}\big( \boldsymbol K (\nabla p_1 - \nabla p_2), (\nabla p_1 - \nabla p_2) \big) &= \langle \tilde{q}_{L1} - \tilde{q}_{L2}, (p_1 - p_2) \rangle_{\CS} ,
\end{align}
and subtracting the equations for $p_{c1}$ and $p_{c2}$ (cf.~\eqref{eq:varpmixf3}) using $\theta_c = p_{c1} - p_{c2}$, we get
\begin{multline}
(c_{fc} \partial_t (p_{c1} - p_{c2}), p_{c1} - p_{c2})_{\CS} +  (\partial_t[{\uu}_1 - \uu_2]_\CS \cdot {\boldsymbol n}^+, p_{c1} - p_{c2})_\CS
\\
+ \frac{1}{12 \mu_f} ( w^{\frac 3 2} (\overline{\nabla} (p_{c1} - p_{c2}), w^{\frac 3 2} \overline{\nabla} p_{c1} - p_{c2})_\CS =  
- \langle \tilde{q}_{L1}- \tilde{q}_{L2}, p_{c1} - p_{c2} \rangle_\CS.
\label{eq:varpmixfu3}
\end{multline}
Adding the above three equations \eqref{eq:varumixfu4}-\eqref{eq:varpmixfu3}, we obtain (recalling that the trace of $p_i, i = 1,2$ on $\mathcal{C}$ is $p_{ci}, i = 1,2$ in cancelling the { right-hand sides} of \eqref{eq:varpmixfu2}-\eqref{eq:varpmixfu3}), 
\begin{multline}
  \dfrac{1}{2}\dfrac{d}{dt}\|\partial_t{\uu_1} - \partial_t{\uu}_2\|^2 + \dfrac{d}{dt} G\|\varepsilon ({\uu_1} - {\uu}_2) \|^2 +\dfrac{1}{2} \lambda \dfrac{d}{dt}\|\nabla\cdot ({\uu_1} - {\uu}_2) \|^2 \\
  + \gamma \| \nabla \big(\partial_t{\uu_1} - \partial_t{\uu}_2 \big)\|^2 +c_{fc}  \dfrac{1}{2} \dfrac{d}{dt}\|p_{c1} - p_{c2}\|_{\CS}^2 + \dfrac{1}{2} \dfrac{d}{dt} \Big\|\Big(\frac{1}{M} + c_f \varphi_0\Big) (p_1 - p_2)  \Big\|^2
  \\
 + \Big\|\frac{1}{12 \mu_f} ( w^{\frac 3 2} (\overline{\nabla} (p_{c1} - p_{c2})) \Big\|_{\CS}^2 +   \frac{1}{\mu_f} \Big \|\big( \boldsymbol K (\nabla p_1 - \nabla p_2) \big)\Big\|^2
 \\
  \leq -\langle P_n([\uu_1]_\CS) - P_n([\uu_2]_\CS), \partial_t{\uu_1} - \partial_t{\uu}_2 \rangle   +   j(\uu_1, \partial_t{\uu_2}) - j(\uu_1, \partial_t{\uu_1})  
  \\[0.25cm]
 +   j(\uu_2, \partial_t{\uu_1}) - j(\uu_2, \partial_t{\uu_2}) . 
%\label{eq:varumixfu1}    
\end{multline} 
We note that $p_1 - p_2$, $p_{c1}-p_{c2}$ and $\uu_1 - \uu_2$ and have zero initial and boundary conditions. Integration in time from $0$ to $t$ gives 
\begin{multline}
  \dfrac{1}{2}\|\partial_t{\uu_1} - \partial_t{\uu}_2\|^2 +  G\|\varepsilon({\uu_1} - {\uu}_2) \|^2 +\dfrac{1}{2} \lambda \|\big(\nabla\cdot ({\uu_1} - {\uu}_2) \big)\|^2 
  \\
  + \gamma \int_0^t \| \nabla \big(\partial_t{\uu_1} - \partial_t{\uu}_2 \big)\|^2 d\tau +c_{fc}  \dfrac{1}{2} \|(p_{c1} - p_{c2})\|_{\CS}^2 + \dfrac{1}{2}  \Big\|\Big(\Big(\frac{1}{M} + c_f \varphi_0\Big) (p_1 - p_2)  \Big) \Big\|^2
  \\
 + \int_0^t \Big\|\frac{1}{12 \mu_f} ( w^{3/2} (\overline{\nabla} (p_{c1} - p_{c2})\|_{\CS}^2 d\tau +   \frac{1}{\mu_f} \int_0^t \|\big( \boldsymbol K (\nabla p_1 - \nabla p_2)\big)\Big\|^2 d\tau
 \\[0.25cm]
  \leq -\langle P_n([\uu_1]_\CS) - P_n([\uu_2]_\CS), \partial_t{\uu_1} - \partial_t{\uu}_2 \rangle   +   j(\uu_1, \partial_t{\uu_2}) - j(\uu_1, \partial_t{\uu_1})  
  \\[0.25cm]
 +   j(\uu_2, \partial_t{\uu_1}) - j(\uu_2, \partial_t{\uu_2}) . 
%\label{eq:varumixfu1}    
\end{multline} 
Next, we estimate the right-hand side of the above inequality. The normal contact term is estimated as 
\begin{multline}
 \int_0^t |\langle P_n([\uu_1]_\CS) - P_n([\uu_2]_\CS), \partial_t{\uu_1} - \partial_t{\uu}_2 \rangle| d\tau
\\  
  \leq \displaystyle \int_0^t \displaystyle \int_{\CS} c_n|(-[\uu_2]_\CS \cdot \boldsymbol n^{+} - g_0)_{+})^{m_n} - ([\uu_1]_\CS \cdot \boldsymbol n^{+} - g_0)_{+})^{m_n}| \, |\partial_t{\uu_1} - \partial_t{\uu}_2| ds d\tau
  \\  
  \leq C \int_0^t\|\nabla \uu_1 - \nabla \uu_2\| \|\nabla \partial_t\uu_1 - \nabla \partial_t\uu_2\| d\tau
\end{multline}
where we have used the Lipschitz continuity of $P_n$ and trace estimate for $\partial_t (\uu_1-\uu_2)$. The Lipschitz continuity is based on the a priori estimate that will be obtained later. The friction term is similarly treated, using that
\begin{multline}
  j(\uu_1, \partial_t{\uu_2}) - j(\uu_1, \partial_t{\uu_1})  +   j(\uu_2, \partial_t{\uu_1}) - j(\uu_2, \partial_t{\uu_2})
  \\
  \leq \int_0^t \int_{\CS} c_T|(-[\uu_2]_{\CS} \cdot \boldsymbol n^{+} - g_0)_{+})^{m_T} - ([\uu_1]_{\CS} \cdot \boldsymbol n^{+} - g_0)_{+})^{m_T}|   |[\uu_1]_{T\CS} - [\uu_2]_{T\CS}| ds d\tau
  \\
  \leq C \int_0^t \|\nabla \uu_1 - \nabla \uu_2\| \|\nabla \partial_t\uu_1 - \nabla \partial_t\uu_2\|d\tau , 
\end{multline}
where $[\uu_i]_{T\CS}, i = 1,2$ denotes the tangential component of jump of $\uu_i$ on $\CS$. Here, we have used the restrictions on $m_n, m_T$ so that for $m_n+1, m_T+1$, $H^{1/2}(\CS)$ is continuously embedded in $L^{m_n+1}(\CS), L^{m_T+1}(\CS)$. Application of Young's inequality to absorb the $\nabla \partial_t$ term on the right hand side followed by Gronwall's lemma completes the proof. 
\subsection{Existence of solution}

We consider the existence of solution for the problem (Q). We begin with the regularization of the friction term. For the above variational problem (Q) the steps for well-posedness rely on the same strategy as in model problem (Q$_0$): Galerkin approximation for the finite dimensional projection, existence of the finite dimensional approximation, obtaining compactness estimates, extracting a subsequence that provides the limit quantities, and then showing that the limit quantities satisfy the continuous variational formulation \eqref{eq:varumixf1}-\eqref{eq:varpmixf3}. The relevant system of second order ODEs now contain nonlinear terms occurring due to the contact mechanics. Fortunately, the nonlinearity can be absorbed by the other terms while retaining the strict positivity of the coefficients appearing in the time derivative terms. Then the existence of a solution  result follows with appropriate identification of the limit quantities.

\subsubsection{Regularization of friction term} \label{sec:existenceFR}

We define 
\[
R(\boldsymbol u) = \dfrac{1}{m_n+1} \int_{\CS} c_n[(-[\uu]_\CS \cdot \boldsymbol n^{+} -g_0)_{+}]^{m_n+1} ds , \]
%and recall \begin{equation*}
%{j} (\uu, \vv) = \int_{\CS} c_T[([\uu]_{\CS} - g_0)]^{m_T}|\vv_T| ds, \quad \uu, \vv \in \V.   
%\end{equation*} 
which represents the energy associated with the normal deformation of the fracture interface. Following Martins \& Oden \cite{martins1987existence}, we define the regularization of $j$ by modifying $|\vv|$ factor in the friction term. The regularization ensures that the friction term becomes differentiable and hence we can replace the variational inequality with the equality. We define a family of convex functions $\psi_\ep \in C^1(\mathbb{R}^d,\mathbb{R})$ for $\ep >0$ satisfying the following conditions
\begin{enumerate}
      \item[(i)] $ 0 \leq {\psi_\ep} \leq |\vv|$,
      \item [(ii)] there exists $D_1>0$ such that the directional derivative $|{\psi_\ep^\prime}  (\boldsymbol w) (\vv) \leq D_1 |\vv|$,
    \item [(iii)]there exists $D_2>0$ such that $|{\psi_\ep} (\vv) - |\vv|| \leq {D_2} \ep$ uniform in $\ep$.
\end{enumerate}
We perform a regularization of $J$ by defining
\begin{equation}
{J_\ep} (\uu, \vv) = \int_{\CS} c_T[(-[\uu]_{\CS} \cdot \boldsymbol n^{+} - g_0)_{+}]^{m_T} {\psi_\ep} (\vv_T) ds, \quad \uu, \vv \in \V.   
\end{equation} 
The partial Gateaux derivative of $J_\ep$ with respect to the second argument of $J_\ep$ is then given by, for $\boldsymbol z \in \boldsymbol V$ 
\[\langle J_\epsilon (\boldsymbol w, \vv), \boldsymbol z \rangle = \int_{\CS} c_T [\left(-[\uu]_n \cdot \boldsymbol n^{+} - g_0 \right)_{+}]^{m_n} {\psi_\epsilon}^\prime (\vv_T) \boldsymbol z ds. \]
\subsubsection{Regularized problem (QR)}
The regularized problem is then given as: 
 The conformal variational formulation of Problem (QR) reads: For given $\gamma>0$, ${\boldsymbol f} \in L^2((\Omega \setminus \CS) \times ]0,T[)^d$, $\tilde q \in L^2(\Omega \times ]0,T[)$, and  $\tilde {q}_L^\ep \in L^2(0,T;H^{1}(\CS)')$, find  $\boldsymbol u^\ep \in L^\infty(0,T;\V)$, $\partial_t{\boldsymbol u}^\ep \in L^\infty(0,T; L^2(\Omega))  {\displaystyle \cap } L^2(0,T; \V)$, $\partial_{tt} \boldsymbol u^\ep \in L^2(0,T;\V^{'})$,  $p^\ep \in L^\infty(0,T;H^1(\Omega))$, $p_c^\ep \in L^2(0,T;H^{1}_w(\CS))$, such that 
 for all ${\boldsymbol v} \in \V$, 
\begin{multline}
 (\partial_{tt} {{\boldsymbol u}^\ep}, {\boldsymbol v}) + 2 G \big(\varepsilon({\boldsymbol u}^\ep), \varepsilon({\boldsymbol v})\big) + \lambda \big(\nabla \cdot {\boldsymbol u}^\ep,\nabla\cdot {\boldsymbol v}\big) + \gamma \big(\varepsilon(\partial_t{ \boldsymbol u}^\ep), \varepsilon({\boldsymbol v})\big) + \gamma \big(\nabla \cdot \partial_t{ \boldsymbol u}^\ep,\nabla\cdot {\boldsymbol v}\big)  - \alpha \big( p^\ep ,\nabla\cdot {\boldsymbol v}\big) \\
 +\big(p_c^\ep ,[{\boldsymbol v}]_{\CS} \cdot {\boldsymbol n}^+\big)_\CS + \langle P_n([\uu^\ep]_\CS), \vv \rangle +  \langle {J_\ep }(\uu(t), {\vv}) - {J_\ep} (\uu(t), \dot{\uu}^\ep(t) \rangle = \big( {\boldsymbol f} ,{\boldsymbol v}\big) 
\label{eq:varumixfr1}    
\end{multline} 
for all $\theta \in H^1(\Omega)$,
\begin{align}
\label{eq:varpmixfr2}    \Big({\partial_t}\Big(\Big(\frac{1}{M} + c_f \varphi_0\Big) p^\ep +\alpha  \nabla\cdot {\boldsymbol u}^\ep \Big), \theta\Big)
+ \frac{1}{\mu_f}\big( \boldsymbol K \nabla p^\ep,\nabla \theta \big) &= \big(\tilde q, \theta \big) + \langle \tilde{q}_L^\ep,\theta \rangle_{\CS}
\end{align}
for all $\theta_c \in H_w^1(\CS)$, and
\begin{multline}
(c_{fc} \partial_t p_c^\ep, \theta_c)_{\CS} +   (\partial_t (-[{\boldsymbol u}^\ep]_\CS \cdot {\boldsymbol n}^+), \theta_c )_\CS \\
+ \frac{1}{12 \mu_f} ( w^{\frac 3 2} (\overline{\nabla} p_c^\ep - \rho_{f,r} g \nabla \eta), w^{\frac 3 2} \overline{\nabla} \theta_c )_\CS =  ( \tilde q_W , \theta_c )_\CS,
- \langle \tilde{q}_L^\ep, \theta_c \rangle_\CS ,
\label{eq:varpmixfr3}
\end{multline}
 subject to the initial conditions,
\begin{align}
\label{eqn:initialfr1}
\quad \Big(\Big(\frac{1}{M} + c_f \varphi_0\Big)p^\ep + \alpha \nabla \cdot {\boldsymbol u}^\ep \Big)\Big|_{t=0} = \Big(\frac{1}{M} + c_f \varphi_0\Big)p_0 + \alpha \nabla \cdot {\boldsymbol u}_0
\end{align}
and $\uu(x,0) = \uu_0, \partial_t \uu(x,0) = \uu_1$.
We first approximate the solution in a finite dimensional space following the Galerkin approximation and before taking the limit $\ep \rightarrow 0$. 
\subsubsection{Existence of semi-discrete solution to the problem (QR)}
We consider a semi-discrete Galerkin scheme. Let $\{\theta_i\}_{i = 1}^{\infty}$ form a basis for $Q$, that is, for any $m\geq 1$, we approximate $Q$ by $Q_m$ having the basis $\{\theta_i, i\leq m\}$ and $Q = \overline{\bigcup_{m \geq 1} Q_m}$.
We define the Galerkin approximation  $p_m$  of $p^\ep$
\[p_m(t) = \sum_{j = 1}^m P_j(t) \theta_j,\]
with $p_j = H^2(0,T), 1\leq j\leq m$. 
Similarly, considering $\{\boldsymbol{w}_i\}_{i = 1}^\infty$ be a sequence of functions $\boldsymbol w_i \in V$ for all $i$ and let  $\boldsymbol w_i, i = 1, \ldots, m$ span the subspace $V_m$ of $V$ with $V = \overline{\bigcup_{m \geq 1} V_m}$. To approximate the solution $\uu_m: \Omega^+ \bigcup \Omega^- \times [0,T] \rightarrow \mathbb{R}^d, d = 2,3$ of mechanics equation \eqref{eq:varumixfr1}, we define 
\[\uu_m(\boldsymbol x,t) :=  \sum_{j=1}^m \boldsymbol U_j(t) \boldsymbol w_j(\boldsymbol x). \]
The solution of the fracture pressure $p_{cm}$ is obtained by taking the trace of $p_m$ on $\CS$.
For the leakage term, we have
\[\widetilde{q}_{m,L} (t) = \sum_{j=1}^m Q_j(t)\theta_{j,c}.\]
The coefficients of the approximations are chosen so that the model equations \eqref{eq:varumixfr1}-\eqref{eq:varpmixfr3} lead to the following system of coupled nonlinear ODEs:
\begin{multline}
\noindent \text{For } \boldsymbol{w}_i, i = 1, \ldots,m,\\
 (\partial_{tt} {{\boldsymbol u}_m}, {\boldsymbol w}_i) + 2 G \big(\varepsilon({\boldsymbol u}_m), \varepsilon({\boldsymbol w_i})\big) + \lambda \big(\nabla \cdot {\boldsymbol u}_m,\nabla\cdot {\boldsymbol w_i}\big) + \gamma (\dot{\uu}_m(t), \boldsymbol w_i) - \alpha \big( p_m ,\nabla\cdot {\boldsymbol w_i}\big) \\
 +\big(p_{cm} ,[{\boldsymbol w_i}]_{\CS} \cdot {\boldsymbol n}^+\big)_\CS + \langle P_n([\uu_m]_\CS), \boldsymbol w_i \rangle +  \langle {J_\ep }(\uu_m(t), {\boldsymbol w_i}) - {J_\ep} (\uu_m(t), \dot{\uu}_m(t) \rangle = \big( {\boldsymbol f} ,{\boldsymbol w_i}\big) 
\label{eq:varumixfrd1}    
\end{multline} 
for $\theta_i, i = 1,\ldots,m$, \\[0.4em]
\begin{align}
   \Big({\partial_t}\Big(\Big(\frac{1}{M} + c_f \varphi_0\Big) p_m +\alpha  \nabla\cdot {\boldsymbol u}_m \Big), \theta_i \Big)
+ \frac{1}{\mu_f}\big( \boldsymbol K \nabla p_m,\nabla \theta_i \big) &= \big(\tilde q, \theta_i \big) + \langle \tilde{q}_L,\theta_i \rangle_{\CS}, \label{eq:varpmixfrd2} 
\end{align}
and for $\theta_{i,c}, i=1, \ldots, m$,\\  
\begin{multline}
(c_{fc} \partial_t p_{cm}, \theta_{i,c})_{\CS} +   (-\partial_t[{\boldsymbol u}_m]_\CS \cdot {\boldsymbol n}^+, \theta_{i,c} )_\CS + \\
+ \frac{1}{12 \mu_f} ( w^{3/2} (\overline{\nabla} p_{cm} - \rho_{f,r} g \nabla \eta), w^{3/2} \overline{\nabla} \theta_{i,c} )_\CS =  ( \tilde q_W , \theta_{i,c} )_\CS
- \langle \tilde{q}_L, \theta_{i,c} \rangle_\CS.
\label{eq:varpmixfrd3}
\end{multline}
The initial conditions satisfy
\begin{align*}
    \uu^m(0) &= \uu_0^m = \sum_{i = 1}^m \alpha_i^m \boldsymbol{w}_i \rightarrow \uu_0 \text{ in } V \text{ as } m \rightarrow \infty,\\
    \partial_t \uu^m(0) &= \uu_1^m = \sum_{i = 1}^m \beta_i^m \boldsymbol{w}_i \rightarrow \uu_1 \text{ in } L^2(\Omega) \text{ as } m \rightarrow \infty,
\end{align*}
and analogously for the pressure $p_m(0)$.  This forms a system of ODEs and we can show the existence of a solution  by showing the existence for this system. \\
% For $1\leq i \leq m, \forall \theta_i \in H^1(\Omega)\,,\,$
% \begin{multline}
%  \Big(\big((\frac{1}{M} + c_f \varphi_0) \partial_t p_m +\alpha  \nabla\cdot {\partial_t \boldsymbol u_m} \big) ,\theta_i \Big)
%  + \frac{1}{\mu_f}\big( \boldsymbol K \nabla \partial_t p_m,\nabla \theta_i \big) +(c_{fc}  \partial_t p_{cm}, \theta_i)_\CS \\
%   + ([{\partial_t \boldsymbol u}]_\CS \cdot {\boldsymbol n}^+, \theta_i)_\CS + \frac{1}{12 \mu_f} \big ( w^{\frac{3}{2}} (\overline{\nabla} p_{cm} - \rho_{f,r} g \eta), w^{\frac{3}{2}} \overline{\nabla} \theta_i \big )_\CS 
%  = \big(\tilde q, \theta \big),
% \label{eq:varp4}
% \end{multline}
We define the following matrices

\begin{tabular}{ l l l}
     \hspace{-1cm} $A^{up}_{ij} =  \int_{\Omega^{+} \cup \Omega^{-}} \theta_j \nabla \cdot \boldsymbol w_i dx$, & $C_{\uu ij} = \int_\Omega  \boldsymbol w_i \boldsymbol w_j dx$,  & \hspace{-2cm} $B_{ij} = \int_{\Omega^{+} \cup \Omega^{-}} \alpha \left(\nabla \cdot {\boldsymbol w}_j \right) \left (\nabla \cdot {\boldsymbol w}_i \right) dx$\\[1em]
      \hspace{-1cm}  $C_{p ij} = \int_\Omega \Big(\frac{1}{M}+c_f \varphi_0 \Big) \theta_i \theta_j dx$, &  $C_{pc ij} = \int_\CS c_{fc}   \theta_{i,c} \theta_{j,c} ds$, & \hspace{-2cm} $L_{ij} = \int_{\Omega^{+} \cup \Omega^{-}} \frac{1}{\mu_f} {\boldsymbol K}\nabla \theta_j \cdot \nabla \theta_i dx$, \\[1em]
      \hspace{-1cm}  $M_{ij}^{upc} = -\int_{\CS}[\boldsymbol w_j]_{|\CS} \cdot {\boldsymbol n}^{+} \theta_{i,c} ds$, & $W_{ij} = \int_{\CS} \dfrac{w^3}{12 \mu_f} \bar \nabla \theta_{j,c} \cdot  \bar \nabla \theta_{i,c} ds$, &  {} \\[1em]
      \hspace{-1cm}  $E_{1ij} = \int_{\Omega^{+} \cup \Omega^{-}} 2G {\boldsymbol \varepsilon}({\boldsymbol w}_j) : {\boldsymbol \varepsilon}({\boldsymbol w}_i) dx$, & $E_{2ij} = \int_{\Omega^{+} \cup \Omega^{-}} \lambda ({\nabla \cdot} {\boldsymbol w}_j) ({\nabla \cdot} {\boldsymbol w}_i)dx$. & {} \\[1em] 
\end{tabular}

% \begin{array}{lll}
% $
%  A^{up}_{ij} =  \int_\Omega \theta_j \nabla \cdot \boldsymbol w_i dx, \quad
% A^{upc}_{ij} =  \int_\Omega \theta_j \nabla \cdot \boldsymbol w_i dx, \quad
% B_{ij} = \int_{\Omega^{+} \cup \Omega^{-}} \alpha \left(\nabla \cdot {\boldsymbol w}_j \right) \left (\nabla \cdot {\boldsymbol w}_i \right) dx\\
%  C_{p ij} = \int_\Omega  \theta_i \theta_j dx,  \quad 
% C_{\uu ij} = \int_\Omega  \boldsymbol w_i \boldsymbol w_j dx, \quad L_{ij} = \int_\Omega \frac{1}{\mu_f} {\boldsymbol K}\nabla \theta_j \cdot \nabla \theta_i dx, \\
% M_{ij} = -\int_{\CS}[\boldsymbol w_j]_{|\CS} \cdot {\boldsymbol n}^{+} \theta_{i,c} ds,  \quad 
% W_{ij} = \int_{\CS} \dfrac{w^3}{12 \mu_f} \bar \nabla \theta_{j,c} \cdot  \bar \nabla \theta_{i,c} ds. \\
%  E_{ij} = \int_{\Omega^{+} \cup \Omega^{-}} 2G {\boldsymbol \varepsilon}({\boldsymbol w}_j) : {\boldsymbol \varepsilon}({\boldsymbol w}_i) dx + \int_{\Omega^{+} \cup \Omega^{-}} \lambda ({\nabla \cdot} {\boldsymbol w}_j) ({\nabla \cdot} {\boldsymbol w}_i)dx. 
% $
% \end{array}
We denote $U= [\boldsymbol U_1, \boldsymbol U_2, \ldots, \boldsymbol U_m]^T$; then the elasticity equation results in the equation
\begin{align}
    C_\uu \dfrac{d^2 {\boldsymbol U}}{dt^2} + (E_1 + E_2)  {\boldsymbol U} + \gamma E_1  \dfrac{d \boldsymbol U} {dt} + R({\boldsymbol U})+J({\boldsymbol U}) + A^{up} P = F.
\end{align}
We denote $P= [p_1, p_2, \ldots, p_m]^T$; then the mass balance equations in fracture and matrix result in the equation
\begin{align}
 C_p \dfrac{d P}{dt} + A^{up} \dfrac{d U}{dt}  + LP + WP_c +  C_{pc}\dfrac{d P_c}{dt} = Q.    
\end{align}
% Using the elasticity equation, the above ODE system takes the form
% \[C\dfrac{d^2 P}{dt^2} + \Big(\Big(\frac{1}{M}+c_f \varphi_0 \Big) C + E\Big)\dfrac{d P}{dt}  + (L+W)P = Q. \]
% This is a second order non-linear system of ODEs with identity in the highest order, $(C+E)$ matrix as coefficients being symmetric positive definite. 
To assert the unique solution for the system of ODEs, we write down a fixed point scheme. Together with the initial conditions the convergence of the fixed point provides solution to the above system. 
The fixed point scheme is given by: Start with $\boldsymbol U^0, P^0, P_c^0$ as the initial condition, given $\boldsymbol U^{k-1}, P^{k-1}, P_c^{k-1}$, solve for $\boldsymbol U^{k}, P^{k}, P_c^{k}$ satisfying
\begin{align}
\nonumber   C_\uu \dfrac{d^2 {\boldsymbol U^k}}{dt^2} + (E_1 + E_2)  {\boldsymbol U^k} + \gamma E_1  \dfrac{d \boldsymbol U^k} {dt} + R({\boldsymbol U^{k-1}})+J({\boldsymbol U^{k-1}}) + A^{up} P^{k} = F. \\
\label{eq:fixedpoint}     C_p \dfrac{d P^k}{dt} + A^{up} \dfrac{d U^k} {dt}  + LP^k + WP_c^k +  C_{pc}\dfrac{d P_c^k}{dt} = Q.  
 \end{align}
Note that the above system is a linear system as the nonlinearity has been iteration-lagged. The existence and uniqueness of the linear system has already been achieved in the previous section. Following the estimates in the proof for uniqueness of solution for problem (Q), contraction of the above scheme follows and thereby we obtain the proof of existence. We spare the details. 

\begin{remark}
The above fixed point scheme \eqref{eq:fixedpoint} decouples the above system in a linear and nonlinear parts which are iteration lagged. The linear part contains coupled terms of flow in fracture and matrix and mechanics however decouples the friction and normal compliance terms. This is however not the typical approach for solving this system. This coupled problem is solved using fixed stress algorithm \cite{girault2016convergence,storvik2019optimization} which decouples the flow and the mechanics models. This approach adds additional stabilization to the pressure equation to ensure the unconditional convergence. Similarly, there are other iterative approaches including the  fixed strain algorithm  and undrained splitting \cite{kim11,kim2, almani_undrained, ref:Mikwheel12} that decouple the flow and mechanics equations. \par
To solve numerically, we also need to consider temporal discretization of the above scheme. Since the above model is a multiphysics problem, it is advantageous to have different time stepping schemes for different physical effects. Accordingly, multirate schemes have been developed that exploit the difference in characteristic time scales while maintaining accuracy and convergence of the numerical scheme. Such works have been considered in \cite{almani_frac_22, almani_fixedstress, almani_frac_19,  almani_cma_19}.
\end{remark}

\subsubsection{A priori estimates to the problem (QR)}
We proceed to derive estimates using the approximate solutions as the test functions. This is allowed as we are in conformal situation. We begin with the variational form \eqref{eq:varumixf1}-\eqref{eq:varpmixf3} and choose $\boldsymbol v = \partial_t {\boldsymbol u_m^\ep}, \theta =  p_m^\ep, \theta_c = p_{cm}^\ep$, to obtain
\begin{multline}  \label{eq:a?p} 
 (\partial_{tt} {\boldsymbol u_m^\ep}, \partial_t{\boldsymbol u_m^\ep})
+ 2 G \big(\varepsilon({\boldsymbol u_m^\ep}), \varepsilon({\partial_t \boldsymbol u_m^\ep})\big) + \lambda \big(\nabla \cdot { \boldsymbol u_m^\ep},\nabla\cdot {\partial_t \boldsymbol u_m^\ep}\big)  
+ \gamma \big(\varepsilon( \partial_t{\boldsymbol u_m^\ep}), \varepsilon({\partial_t \boldsymbol u_m^\ep})\big) \\[0.15cm]
+ \gamma \big(\nabla \cdot \partial_t{ \boldsymbol u_m^\ep},\nabla\cdot {\partial_t \boldsymbol u_m^\ep}\big) 
- \alpha \big( p_m^\ep ,\nabla\cdot {\partial_t \boldsymbol u_m^\ep}\big) 
 +\big(p_{cm}^\ep ,[{\partial_t \boldsymbol u_m^\ep}]_{\CS} \cdot {\boldsymbol n}^+\big)_\CS 
+  \partial_t R(\uu_m^\ep) + (c_{fc} \partial_t p_{cm}^\ep, p_{cm}^\ep) \\[0.15cm]
+ \langle J_\ep(\uu_m^\epsilon, \partial_t \uu_m^\ep), \partial_t \uu_m^\ep \rangle  + \Big({\partial_t}\Big(\Big(\frac{1}{M} + c_f \varphi_0\Big) p_m^\ep  +\alpha  \nabla\cdot {\boldsymbol u_m^\ep} \Big) , p_m^\ep \Big)
  + \frac{1}{\mu_f}\big( \boldsymbol K \nabla p_m^\ep,\nabla p_m^\ep \big) \\
 -  (\partial_t[{\boldsymbol u_m^\ep}]_\CS \cdot {\boldsymbol n}^+, p_{cm}^\ep )_\CS 
 + \frac{1}{12 \mu_f} \langle w^{\frac{3}{2}} (\overline{\nabla} p_{cm}^\ep - \rho_{f,r} g \eta), w^{\frac{3}{2}} \overline{\nabla} p_{cm}^\ep \rangle = \langle\tilde q_W , p_{cm}^\ep \rangle_\CS + \big(\tilde q, p_m^\ep \big) + \big( {\boldsymbol f} ,{\boldsymbol u_m^\ep}\big). \end{multline} 
The only change from the previous analogous estimate in the linear case  is the appearance of the term $R(\uu_m^\ep)$ and $\langle J_\ep(\uu_m^\ep, \partial_t \uu_m^\ep), \partial_t \uu_m^\ep \rangle$. Both these terms are nonnegative and, hence, the following estimate follows 
\begin{proposition} \label{pro:estimatesQ3R}
Let the data satisfy: $\boldsymbol f \in H^2(0,T; L^2(\Omega)^d)$, $\tilde{q} \in L^2(\Omega \times (0,T))$, $\tilde{q}_W \in H^1(0,T;L^2(\CS))$, $p_0 \in Q$, and $w$ satisfies \eqref{eq:hyp_w}, then the solution $\uu_m^\ep, p_m^\ep, p_{cm}^\ep$ satisfying  \eqref{eq:varumixf1}-\eqref{eq:varpmixf3} satisfy the following estimates
 \begin{multline} \label{eq:aprioriestimatesQ3R} 
 \|\partial_t \uu_m^\ep(t)\|^2 + G \|\varepsilon(\uu_m^\ep)(t) \|^2 
+ \lambda \|\nabla \cdot \uu_m^\ep(t) \|^2  
+ \Big(\frac{1}{M} + c_f \varphi_0\Big) \| p_m^\ep(t) \|^2 +  \gamma \|\nabla \partial_t \uu_m^\ep(t) \|^2  \\
+ c_{fc} \|p_{cm}^\ep(t) \|^2 + \frac{1}{\mu_f} \int_0^t \| \boldsymbol K^{1/2} \nabla p_m^\ep(s) \|^2 ds
+ \frac{1}{12 \mu_f} \int_0^t \| w^{\frac{3}{2}} \overline{\nabla} p_{cm}^\ep(s) \|^2 ds 
 \leq C .
   \end{multline}
Here, $C$ depends on the data and is independent of $m, \ep$ and the solution triple $(\uu_m^\ep, p_m^\ep, p_{cm}^\ep)$.
\end{proposition}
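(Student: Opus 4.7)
The plan is to repeat the strategy used for Proposition \ref{pro:estimates} in the linear case, now applied to the Galerkin system \eqref{eq:varumixfrd1}--\eqref{eq:varpmixfrd3} for the regularized problem (QR), and track how the two extra nonlinear terms (normal compliance and regularized friction) behave under the chosen test functions. Concretely, I would first note that the Galerkin ansatz puts the approximations $\uu_m^\ep$, $p_m^\ep$, $p_{cm}^\ep$ inside the respective finite dimensional subspaces, so they are themselves admissible test functions; taking $\vv=\partial_t\uu_m^\ep$, $\theta=p_m^\ep$, $\theta_c=p_{cm}^\ep$ and summing the three equations yields \eqref{eq:a?p}.

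The next step is to convert each left-hand side contribution into either a time derivative of a nonnegative quantity or a nonnegative dissipation term. The inertia term $(\partial_{tt}\uu_m^\ep,\partial_t\uu_m^\ep)$ becomes $\tfrac12\tfrac{d}{dt}\|\partial_t\uu_m^\ep\|^2$; the linear elastic contributions give $G\tfrac{d}{dt}\|\varepsilon(\uu_m^\ep)\|^2$ and $\tfrac{\lambda}{2}\tfrac{d}{dt}\|\nabla\cdot\uu_m^\ep\|^2$; the viscoelastic regularization produces the strictly positive dissipation $\gamma\|\varepsilon(\partial_t\uu_m^\ep)\|^2+\gamma\|\nabla\cdot\partial_t\uu_m^\ep\|^2$; the flow terms produce $\tfrac12(c_f\varphi_0+\tfrac1M)\tfrac{d}{dt}\|p_m^\ep\|^2$, $\tfrac{c_{fc}}{2}\tfrac{d}{dt}\|p_{cm}^\ep\|_\CS^2$, together with the dissipation $\tfrac{1}{\mu_f}\|\boldsymbol K^{1/2}\nabla p_m^\ep\|^2$ and $\tfrac{1}{12\mu_f}\|w^{3/2}\overline\nabla p_{cm}^\ep\|_\CS^2$. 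The four coupling terms cancel exactly in pairs as noted in the earlier remark: $-\alpha(p_m^\ep,\nabla\cdot\partial_t\uu_m^\ep)$ with $\alpha(\nabla\cdot\partial_t\uu_m^\ep,p_m^\ep)$, and $(p_{cm}^\ep,[\partial_t\uu_m^\ep]_\CS\cdot\boldsymbol n^+)_\CS$ with $-([\partial_t\uu_m^\ep]_\CS\cdot\boldsymbol n^+,p_{cm}^\ep)_\CS$.

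The key new observation is for the two nonlinear boundary contributions. For the normal compliance, the chain rule combined with the definition of $R$ gives
\begin{equation*}
\langle P_n([\uu_m^\ep]_\CS),\partial_t\uu_m^\ep\rangle=\tfrac{d}{dt}R(\uu_m^\ep),
\end{equation*}
so this contribution is also a time derivative of a nonnegative quantity and can be placed on the left-hand side. For the regularized friction term, by construction of $\psi_\ep$ (property (ii)) and the nonnegativity of the weight $c_T[(-[\uu_m^\ep]_\CS\cdot\boldsymbol n^+-g_0)_+]^{m_T}$, one has $\langle J_\ep(\uu_m^\ep,\partial_t\uu_m^\ep),\partial_t\uu_m^\ep\rangle\ge 0$ pointwise in $t$, so this term too can be absorbed harmlessly on the left. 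The right-hand side $(\boldsymbol f,\uu_m^\ep)+(\tilde q,p_m^\ep)+\langle\tilde q_W,p_{cm}^\ep\rangle_\CS$ together with the gravity contribution $\rho_{f,r}g\,\overline\nabla\eta$ are bounded using Young's inequality with small weights chosen to absorb the gradient/trace terms into the coercive parts on the left, exactly as in Proposition \ref{pro:estimates}.

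Integrating from $0$ to $t$ and using the regularity of the initial data and the trace/Poincar\'e/Korn inequalities \eqref{eq:Poincare}--\eqref{eq:trace2} to bound the $L^2$ norms by their gradient/strain counterparts, Gronwall's lemma then gives the claimed bound with a constant independent of $m$ and $\ep$. The only genuinely delicate step is the treatment of the friction boundary contribution: one has to check that $J_\ep$ is Gateaux differentiable in the second slot and that the chosen test function $\partial_t\uu_m^\ep$ produces a nonnegative integrand, which follows from the convexity of $\psi_\ep$ through the identity $\psi_\ep'(\vv)(\vv)\ge 0$ (a consequence of $0\le\psi_\ep\le|\vv|$ and $\psi_\ep(\boldsymbol 0)=0$). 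Once this sign is secured, the rest of the argument is a bookkeeping repetition of the linear estimate, which is exactly why the resulting bound \eqref{eq:aprioriestimatesQ3R} retains the same structure as in the simpler problem $(Q_0)$.
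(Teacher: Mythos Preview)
Your proposal is correct and follows essentially the same approach as the paper: test with $(\partial_t\uu_m^\ep,p_m^\ep,p_{cm}^\ep)$ to obtain \eqref{eq:a?p}, observe that the two new nonlinear boundary contributions satisfy $\langle P_n([\uu_m^\ep]_\CS),\partial_t\uu_m^\ep\rangle=\partial_tR(\uu_m^\ep)\ge 0$ and $\langle J_\ep(\uu_m^\ep,\partial_t\uu_m^\ep),\partial_t\uu_m^\ep\rangle\ge 0$, and then repeat verbatim the estimate of Proposition~\ref{pro:estimates}. Your write-up is in fact more detailed than the paper's (which simply states that both extra terms are nonnegative and that the estimate therefore follows), and your justification of the sign of the friction term via convexity of $\psi_\ep$ and $\psi_\ep(\boldsymbol 0)=0$ is the right one.
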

We will keep $\ep$ fixed and take the limit $m \rightarrow \infty$. The above estimate implies the following convergence result, if needed, up to a subsequence,  $t \le T$
\begin{align*}
 p_m^\ep &\rightarrow p^\ep \text{ weakly in } L^\infty(0,t; L^2(\Omega)) \cap L^2(0,t; Q)\\  
 %p_{cm} &\rightarrow p_c \text{ weakly in } L^\infty(0,t; H^1(\CS))\\
 \uu_m^\ep & \rightarrow \uu^\ep \text{ weak-star in } L^\infty(0,t; \V) \\
 \partial_t \uu_m^\ep & \rightarrow \partial_t \uu^\ep \text{ weak-star in }    L^\infty(0,t; L^2(\Omega)) \text{ and weakly in }  L^2(0,t; \V).
\end{align*}
To pass to the limit, we observe that
\[c_n[(-[\uu_m^\ep]_\CS \cdot \boldsymbol n^{+} - g_0)_{+}]^{m_n} \in \text{ bounded set of  } L^\infty(0,t;L^{q^*}(\CS)), \]
where $L^{q^*}(\CS)$ is the dual norm of $L^{m_n+1}(\CS)$. This yields,
\[c_n[(-[\uu^\ep_m]_\CS \cdot \boldsymbol n^{+} - g_0)_{+}]^{m_n} \rightarrow \chi \text{ weak-star in } L^\infty(0,t;L^{q^*}(\CS)). \]
In order to show that $\chi = c_n[(-[\uu^\ep]_\CS \cdot \boldsymbol n^{+} - g_0)_{+}]^{m_n}$, we observe that 
$\uu_m^\ep$ is bounded in $(H^1(\Omega \times (0,t)))^d$. Using the compactness of the trace map from $(H^1(\Omega \times (0,t)))$ to $(L^2(\CS \times (0,t)))$, it follows that  $t \le T$
\[\uu_m^\ep \rightarrow \uu^\ep \text{ strongly in }  (L^2(\CS \times (0,t)))^d\] and
\[\uu_m^\ep \rightarrow \uu \text{ a.e. on }  \CS \times (0,t).\]
This leads to 
\[c_n[(-[\uu_m^\ep]_\CS \cdot \boldsymbol n^{+} - g_0)_{+}]^{m_n} \rightarrow c_n[(-[\uu^\ep]_\CS \cdot \boldsymbol n^{+} - g_0)_{+}]^{m_n} \text{ a.e. on } \CS \times (0,t). \]
Moreover, the boundedness of $c_n[(-[\uu_m^\ep]_\CS \cdot \boldsymbol n^{+} - g_0)_{+}]^{m_n}$ in $L^{q^*}(0,t;L^{q^*}(\CS))$ norm  means that it converges to some $\chi$ weakly in this norm. The uniqueness of the limit identifies 
$\chi = c_n[(-[\uu^\ep]_\CS \cdot \boldsymbol n^{+} - g_0)_{+}]^{m_n}$. Summarizing,  we have the following limits,
\begin{align*}
c_n[(-[\uu_m^\ep]_\CS \cdot \boldsymbol n^{+} - g_0)_{+}]^{m_n} \rightarrow c_n[(-[\uu^\ep]_\CS \cdot \boldsymbol n^{+}- g_0)_{+}]^{m_n} \text{ weak-star in } L^\infty(0,T;L^{q^*}(\CS)), \\      
c_T[(-[\uu_m^\ep]_\CS \cdot \boldsymbol n^{+} - g_0)_{+}]^{m_T} \rightarrow c_T[(-[\uu^\ep]_\CS \cdot \boldsymbol n^{+} - g_0)_{+}]^{m_T} \text{ weak-star in } L^\infty(0,T;L^{q^*}(\CS)). 
\end{align*}
The above weak convergence can be improved to strong convergence in $L^\infty(0,T;L^{q^*}(\CS))$. This follows from the compact injection of $\{\vv \in H^{1/2}(\partial \Omega): \vv =0 \text{ a.e. on } \Gamma_D\}$ in $\{\vv \in L^{q}(\partial \Omega): \vv =0 \text{ a.e. on } \partial \Omega\}$ leading to $\uu_m^\ep \rightarrow  \uu^\ep$ in $L^2(0,T; L^q(\CS))$, which leads to 
\begin{align*}
c_n[(-[\uu_m^\ep]_\CS \cdot \boldsymbol n^{+} - g_0)_{+}]^{m_n} &\rightarrow c_n[(-[\uu^\ep]_\CS \cdot \boldsymbol n^{+} - g_0)_{+}]^{m_n} \text{ strongly  in } L^\infty(0,T;L^{q^*}(\CS)), \\      
c_T[(-[\uu_m^\ep]_\CS \cdot \boldsymbol n^{+} - g_0)_{+}]^{m_T} &\rightarrow c_T[(-[\uu^\ep]_\CS \cdot \boldsymbol n^{+} - g_0)_{+}]^{m_T} \text{ strongly  in } L^\infty(0,T;L^{q^*}(\CS)). 
\end{align*}
It accordingly follows that
\begin{align*}
2G\big(\varepsilon({\boldsymbol u_m^\ep}), \varepsilon({ \vv_j})\big) + \lambda \big(\nabla \cdot { \boldsymbol u_m^\ep},\nabla\cdot { \vv_j}\big) \rightarrow 2G\big(\varepsilon({\boldsymbol u^\ep}), \varepsilon({ \vv_j})\big) + \lambda \big(\nabla \cdot { \boldsymbol u^\ep},\nabla\cdot { \vv_j}\big) &\text{ weak-star in } L^\infty(0,T),\\ 
\big(\varepsilon({\partial_t \boldsymbol u_m^\ep}), \varepsilon({ \vv_j})\big) +  \big(\nabla \cdot { \partial_t \boldsymbol u_m^\ep},\nabla\cdot { \vv_j}\big) \rightarrow \big(\varepsilon(\partial_t {\boldsymbol u^\ep}), \varepsilon({ \vv_j})\big) +  \big(\nabla \cdot { \partial_t \boldsymbol u^\ep},\nabla\cdot { \vv_j}\big) &\text{ weak-star in } L^\infty(0,T),\\ 
(\partial_{tt} {\boldsymbol u_m^\ep}, \vv_j) \rightarrow (\partial_{tt} {\boldsymbol u^\ep}, \vv_j) &\text{ weak-star in } \mathcal{D}^\prime(0,T),\\
 \big( p_m^\ep ,\nabla\cdot \vv_j\big) \rightarrow  \big( p^\ep ,\nabla\cdot \vv_j\big) &\text{ weak-star in } L^\infty(0,T),\\
 \big(p_{cm}^\ep ,[{\partial_t \boldsymbol v_m^\ep}]_{\CS} \cdot {\boldsymbol n}^+\big)_\CS \rightarrow 
 \big(p_{c}^\ep ,[{\partial_t \boldsymbol v^\ep}]_{\CS} \cdot {\boldsymbol n}^+\big)_\CS &\text{ weak-star in } L^\infty(0,T),\\
 \frac{1}{\mu_f}\big( \boldsymbol K \nabla p_m^\ep,\nabla \theta_j \big) \rightarrow \frac{1}{\mu_f}\big( \boldsymbol K \nabla p^\ep,\nabla \theta_j \big) &\text{ weakly in } L^2(0,T),\\
% + \dfrac{d}{dt} R(\uu_m^\ep) + \langle J_\ep(\uu_\epsilon, \dot \uu), \dot \uu \rangle  + \Big(\frac{\partial }{\partial t}\big((\frac{1}{M} + c_f \varphi_0) p_m^\ep  +\alpha  \nabla\cdot {\boldsymbol u_m^\ep} \big) , p_m^\ep \Big)\\
  (\partial_t{\uu}_m^\ep , \vv_j) \rightarrow (\partial_t{\uu}^\ep, \vv_j) &\text{ weak-star in } L^\infty(0,T),\\
 \big(p_{cm}^\ep ,[{\boldsymbol v}_j]_{\CS} \cdot {\boldsymbol n}^+\big)_\CS \rightarrow   \big(p_{c}^\ep ,[{\boldsymbol v}_j]_{\CS} \cdot {\boldsymbol n}^+\big)_\CS &\text{ weak-star in } L^\infty(0,T),\\
 ( w^{3/2} (\overline{\nabla} p_{cm}^\ep - \rho_{f,r} g \nabla \eta), w^{3/2} \overline{\nabla} \theta_{cj} )_\CS \rightarrow  ( w^{3/2} (\overline{\nabla} p_c^\ep - \rho_{f,r} g \nabla \eta), w^{3/2} \overline{\nabla} \theta_{cj} )_\CS &\text{ weakly in } L^2(0,T),\\
   \langle P_n([\uu_m^\ep]_\CS), \vv_j \rangle \rightarrow \langle P_n([\uu^\ep]_\CS), \vv_j \rangle &\text{ weak-star in } L^\infty(0,T),\\
  \langle {J_\ep }(\uu_m^\ep, {\vv}_j) \rightarrow \langle \chi^\ep, \vv_j) \rangle &\text{ weak-star in } L^\infty(0,T).
\end{align*}
The identification of $\chi^\ep$ with  ${J_\ep }(\uu_m^\ep, \partial_t{\vv}_j)$ takes place by using monotonicity arguments. We note that the monotonicity of $J_\ep$ in its second argument implies \[ 0 \leq X^m { :=} \langle J_\ep(\uu_m^\ep), \partial_t \uu^\ep_m \rangle -  J_\ep(\uu_m^\ep, \phi), \partial_t \uu^\ep_m - \phi \rangle \text{ for all } \phi \in L^2(0,T; \V). \]
Using the above convergence result, we obtain the following equations in all the terms that are linear. The two nonlinear terms are in $P(\uu_m^\ep)$ and $J_\ep(\uu_m^\ep, \partial_t \uu_m^\ep)$. The first convergence has already been dealt with in the section before. This leads to the following model equations in variational form:
For all $ {\boldsymbol v} \in \V,\;$
\begin{multline}
 (\partial_{tt} {{\boldsymbol u}^\ep}, {\boldsymbol v}) + 2 G \big(\varepsilon({\boldsymbol u}^\ep), \varepsilon({\boldsymbol v})\big) + \lambda \big(\nabla \cdot {\boldsymbol u}^\ep,\nabla\cdot {\boldsymbol v}\big) + \gamma (\partial_t{\uu}^\ep, \vv) - \alpha \big( p^\ep ,\nabla\cdot {\boldsymbol v}\big) \\
 +\big(p_c^\ep ,[{\boldsymbol v}]_{\CS} \cdot {\boldsymbol n}^+\big)_\CS + \langle P_n([\uu^\ep]_\CS), \vv \rangle +  \langle {\chi^\ep }, \vv \rangle = \big( {\boldsymbol f} ,{\boldsymbol v}\big),
\end{multline} 
for all $\theta \in H^1(\Omega)$.
\begin{align}
  \Big({\partial_t}\Big(\Big(\frac{1}{M} + c_f \varphi_0\Big) p^\ep +\alpha  \nabla\cdot {\boldsymbol u}^\ep \Big), \theta\Big)
+ \frac{1}{\mu_f}\big( K \nabla p^\ep,\nabla \theta \big) &= \big(\tilde q, \theta \big) + \langle \tilde{q}_L^\ep,\theta \rangle_{\CS},
\end{align}
and for all $\theta_c \in H_w^1(\CS)$,
\begin{multline}
(c_{fc} \partial_t p_c^\ep, \theta_c)_{\CS} -  (\partial_t[{\boldsymbol u}^\ep]_\CS \cdot {\boldsymbol n}^+, \theta_c )_\CS
\\
+ \frac{1}{12 \mu_f} ( w^{3/2} (\overline{\nabla} p_c^\ep - \rho_{f,r} g \nabla \eta), w^{3/2} \overline{\nabla} \theta_c )_\CS = ( \tilde q_W , \theta_c )_\CS,
- \langle \tilde{q}_L^\ep, \theta_c \rangle_\CS.
\end{multline}
 The above compactness properties are sufficient to guarantee that
\[\langle  P_n(\uu_m^\ep), \vv \rangle \rightarrow \langle  P_n(\uu^\ep), \vv \rangle \text{ weak-star in } L^\infty(0,t). \]
It remains to be shown that $J_\ep$ converges to $\chi^\ep = J_\ep(\uu^\ep, \partial_t \uu^\ep)$.
Integrating over time from $0$ to $T$ the weak formulation after testing with the triple $(\uu_m^\ep, p_m^\ep, p_{cm}^\ep)$, we obtain
\begin{multline*}
 ( \partial_t{\boldsymbol u_m^\ep}, \partial_t{\boldsymbol u_m^\ep})
+ 2 G \big(\varepsilon({\boldsymbol u_m^\ep}), \varepsilon({\partial_t \boldsymbol u_m^\ep})\big) + \lambda \big(\nabla \cdot { \boldsymbol u_m^\ep},\nabla\cdot {\boldsymbol u_m^\ep}\big) 
\nonumber +  R(\uu_m^\ep) + \langle J_\ep(\uu_m^\ep, \partial_t \uu_m^\ep), \partial_t \uu_m^\ep \rangle \\[0.15cm]
+ \Big(\Big(\frac{1}{M} + c_f \varphi_0\Big) p_m^\ep , p_m^\ep \Big)
+ \frac{1}{\mu_f}\big( K \nabla p_m^\ep,\nabla p_m^\ep \big) 
+ (c_{fc}  p_{cm}^\ep, p_{cm}^\ep) 
\\
 + \frac{1}{12 \mu_f} \langle w^{\frac{3}{2}} (\overline{\nabla} p_{cm}^\ep), w^{\frac{3}{2}} \overline{\nabla} p_{cm}^\ep \rangle
 =  \langle\tilde q_W , p_{cm}^\ep \rangle_\CS + \big(\tilde q, p_m^\ep \big) + \big( {\boldsymbol f} ,\partial_t{\boldsymbol u_m^\ep}\big)   \end{multline*} 
so that 
\begin{multline*}
  0 \leq X^m = - \dfrac{1}{2}\|\partial_t \uu_m^\ep(T)\|^2 - 
  G \|\varepsilon(\uu_m^\ep)(T) \|^2 
- \lambda \dfrac{1}{2}\|\nabla \cdot \uu_m^\ep(T) \|^2  
- \Big(\frac{1}{M} + c_f \varphi_0\Big) \dfrac{1}{2}\| p_m^\ep(T) \|^2 \\
 - c_{fc} \dfrac{1}{2}\| p_{cm}^\ep(T) \|^2 - \frac{1}{\mu_f} \int_0^t \| K^{1/2} \nabla p_m^\ep(s) \|^2 ds - R(\uu_m^\ep(T))  
- \frac{1}{12 \mu_f} \int_0^t \| w^{\frac{3}{2}} \overline{\nabla} p_{cm}^\ep(s) \|^2 ds \\
+ \dfrac{1}{2}\|\partial_t \uu_m^\ep(0)\|^2 + 
  G \|\varepsilon(\uu_m^\ep)(0) \|^2 
+ \lambda \dfrac{1}{2}\|\nabla \cdot \uu_m^\ep(0) \|^2  
+ \Big(\frac{1}{M} + c_f \varphi_0\Big) \dfrac{1}{2}\| p_m^\ep(0) \|^2 \\
 + c_{fc} \dfrac{1}{2}\| p_{cm}^\ep(0) \|^2 + R(\uu_m^\ep(0)) + \int_0^T(\boldsymbol f, \partial_t \uu_m^\ep) - \gamma(\partial_t \uu_m^\ep, \partial_t \uu_m^\ep ) dt \\
 -\int_0^T \langle J_\ep(\uu_m^\ep, \partial_t \uu_m^\ep), \phi \rangle dt - \int_0^T \langle J_\ep(\uu_m^\ep, \phi), \partial_t \uu_\ep^m -\phi \rangle dt \text{ for all } \phi \in L^2(0,T; \V).
 \end{multline*}
With the convergence properties and the weak lower semi-continuity of the various terms, we obtain
\begin{multline*}
  0 \leq \lim \text{ sup }_{ m\rightarrow \infty}   X^m \leq \\[0.15cm]
  - \dfrac{1}{2}\|\partial_t \uu^\ep(T)\|^2 - 
  G \|\varepsilon(\uu^\ep)(T) \|^2 
- \lambda \dfrac{1}{2}\|\nabla \cdot \uu^\ep(T) \|^2  
- \Big(\frac{1}{M} + c_f \varphi_0\Big) \dfrac{1}{2}\| p^\ep(T) \|^2 \\
 - c_{fc} \dfrac{1}{2}\| p_{c}^\ep(T) \|^2 - \frac{1}{\mu_f} \int_0^t \| K^{1/2} \nabla p^\ep(s) \|^2 ds - R(\uu^\ep(T))  
- \frac{1}{12 \mu_f} \int_0^t \| w^{\frac{3}{2}} \overline{\nabla} p_{c}^\ep(s) \|^2 ds \\
+ \dfrac{1}{2}\|\partial_t \uu^\ep(0)\|^2 + 
  G \|\varepsilon(\uu^\ep)(0) \|^2 
+ \lambda \dfrac{1}{2}\|\nabla \cdot \uu^\ep(0) \|^2  
+ \Big(\frac{1}{M} + c_f \varphi_0\Big) \dfrac{1}{2}\| p^\ep(0) \|^2 \\
 + c_{fc} \dfrac{1}{2}\| p_{c}^\ep(0) \|^2 + R(\uu^\ep(0)) + \int_0^T(\boldsymbol f, \partial_t \uu^\ep) - \gamma(\partial_t \uu^\ep, \partial_t \uu^\ep ) dt \\
 -\int_0^T \langle \chi^\ep, \phi \rangle dt - \int_0^T \langle J_\ep(\uu^\ep, \phi), \partial_t \uu_\ep -\phi \rangle dt \text{ for all } \phi \in L^2(0,T; \V).
 \end{multline*}
Comparing the right-hand side against the variational weak form (84)-(86) upon testing with $(\uu_m^\ep, p_m^\ep, p_{cm}^\ep)$, we observe that, for all $\phi \in L^2(0,T; \V)$,
\begin{align*}
    0 \leq \int_0^T \langle \chi^\ep -  J_\ep(\uu_m^\ep, \phi), \partial_t \uu_\ep^m -\phi \rangle dt.
\end{align*}
We thus conclude that $\chi^\ep = J_\ep(\uu^\ep, \partial_t \uu_\ep) \text{ in } L^2(0,T; V')$.

Next, we take $\ep \rightarrow 0$. Noting that the estimates are independent of $\ep$, both the convergence and the identification of the limit are similar to the just discussed limiting procedure. We omit the details. 
%The limit equations follow by using smooth test functions and identifying the equations that the limit quantities satisfy. 

\section{Discussion}

We have developed and analyzed a model for flow and deformations in a fractured subsurface medium. This development required a coupling of flow, mechanical effects including accounting for the friction, contact forces and flow in the fracture. The model includes the effect of fluid pressure on the contact forces that depend on the penetration depth between the fracture surfaces. The friction is represented by a modification of Coulomb's law that supports a variational inequality. We established well-posedness using the tools of a priori estimates and passing to the limit in the framework of Martins \& Oden \cite{martins1987existence} and earlier works \cite{girault2015lubrication, girault2019mixed}.
\par Beyond the current model, it is natural to consider two extensions: Accounting for permeability dependence on the stress, and for friction behavior in more general fault dynamics. The rate-and-state dependent friction laws are widely applied in dynamic fault models and critically influence a fault's possible slip features. Furthermore, we limited our analysis to a single fracture whereas in reality a fracture network may appear where interactions between fractures come into play. These extensions will be part of future work.

\section*{Acknowledgement}

MVdH was supported by the Simons Foundation under the MATH + X program, the National Science Foundation under grant DMS-2108175, the Department of Energy under grant DE-SC0020345, and the corporate members of the Geo-Mathematical Imaging Group at Rice University. KK was supported by the Research Council of Norway and Equinor ASA through grant number 267908. KK also acknowledges funding from the VISTA programme, The Norwegian Academy of Science and Letters.
% .. triggering mechanisms, seismicity -- pore pressure -- tidal forcing -- ..

\bibliographystyle{siam} 
\bibliography{Rupture} 
\end{document}